\definecolor{fgreen}{rgb}{0.13, 0.55, 0.13}
\def\setliststart#1{\setcounter{\@listctr}{#1}%
  \addtocounter{\@listctr}{-1}}
\newtheorem{theorem}{Theorem}[section]
\newtheorem{lemma}[theorem]{Lemma}
\newtheorem{proposition}[theorem]{Proposition}
\newtheorem{corollary}[theorem]{Corollary}
\newtheorem{remarks}[theorem]{Remark}
\newtheorem{example}[theorem]{Example}
\numberwithin{equation}{section}
\newcommand{\R}{\mathbb{R}}
\newcommand{\N}{\mathbb{N}}
\newcommand{\PP}{\mathscr{P}}
\newcommand{\C}{\mathcal{C}}
\newcommand{\GG}{\mathsf{G}}
\newcommand{\TT}{\mathcal{T}}
\newcommand{\g}{\mathsf{g}}
\newcommand{\X}{\mathcal{X}}
\newcommand{\diver}{\textrm{div}}
\DeclareMathOperator*{\eps}{\varepsilon}
\DeclareMathOperator*{\SR}{SR}
\def\moverlay{\mathpalette\mov@rlay}
\def\mov@rlay#1#2{\leavevmode\vtop{%
   \baselineskip\z@skip \lineskiplimit-\maxdimen
   \ialign{\hfil$\m@th#1##$\hfil\cr#2\crcr}}}
\newcommand{\charfusion}[3][\mathord]{
    #1{\ifx#1\mathop\vphantom{#2}\fi
        \mathpalette\mov@rlay{#2\cr#3}
      }
    \ifx#1\mathop\expandafter\displaylimits\fi}
\title[Semi-linear equations and MFG]{Semi-linear parabolic equations on homogenous Lie groups arising from mean field games}
\author{Paola Mannucci \and Claudio Marchi \and Cristian Mendico}
\address{Dipartimento di Matematica "Tullio Levi-Civita", Universit\`a degli studi di Padova,  Via Trieste 63 - 35121 Padova}
\email{mannucci@math.unipd.it; claudio.marchi@unipd.it}
\address{Universit\`a degli studi di Roma Tor Vergata, Via della Ricerca Scientifica 1 - 00133 Roma}
\email{mendico@axp.mat.uniroma2.it}
\date{\today}
\subjclass[2020]{35H20; 35Q89; 35K10; 35K58; 35R03}
\keywords{Semilinear parabolic equation; Hamilton-Jacobi equation; Fokker-Planck equation; Subelliptic PDEs; Lie groups, Mean Field Games}
\thanks{The authors were partially supported by Istituto Nazionale di Alta Matematica, INdAM-GNAMPA project 2022, CUP E55F22000270001, and INdAM-GNAMPA project 2023, CUP E53C22001930001. The third author was also partially supported by the MIUR Excellence Department Project awarded to the Department of Mathematics, University of Rome Tor Vergata, CUP E83C23000330006. \\
The authors wish to thank Davide Barilari for fruitful discussions and comments. 
}
\begin{document}
\usetagform{blue}
\maketitle

\begin{abstract}
The existence and the uniqueness of solutions to some semilinear parabolic equations on homogeneous Lie groups, namely, the Fokker-Planck equation and the Hamilton-Jacobi equation, are addressed. The anisotropic geometry of the state space plays a crucial role in our analysis and creates several issues that need to be overcome. Indeed, the ellipticity directions span, at any point, subspaces of dimension strictly less than the dimension of the state space. Finally, the above results are used to obtain the  short-time existence of classical solutions to the mean field games system defined on an homogenous Lie group. 
\end{abstract}

\tableofcontents

\section{Introduction}

In the last decades, the existence and the regularity of solutions to linear, semi-linear and fully nonlinear equations defined on sub-Riemannian structures have attracted a lot of attention. Besides their own interests, these equations arise from several models such as diffusion process, control theory and human vision (we refer, for instance, to \cite{2014}, \cite{Baspinar_2020}). In all these settings such equations have in common the fact that their ellipticity directions span, at any point, subspaces of dimension strictly less than the dimension of the state space and all the remaining directions are recovered from commutators. It follows that the underlying geometric structure of the state space is of anisotropic type and this plays a crucial role in the analysis of solutions to elliptic and parabolic equations.

In the elliptic case, several results are present in literature on divergence form operators both for linear and semi-linear equations: Harnack's inequality, regularity results, existence and size estimates of the Green's function can be found, for instance, in \cite{Franchi_1995}, \cite{Lu_1992}. To the authors' knowledge, only few results are available for non-divergence form operators, for which we refer to  \cite{Bonfiglioli_2003},\cite{Bramanti_2000}, \cite{Polidoro_1994}. The parabolic counterpart, despite its great relevance from the applicative viewpoint, has been less investigated, and we mention \cite{Alexopoulos_2002} and references therein. A general and self-contained introduction to the subject can be found in  \cite{Bramanti_2010}, \cite{Bonfiglioli}, \cite{Alexopoulos_2002}.

In this work we focus our attention on two semi-linear parabolic equations posed on an homogenous Lie group: Hamilton-Jacobi equation and Fokker-Plank equation, both arising from mean field games (MFG) theory. In particular, we are interested in showing the existence of classical solutions to such equations in order to obtain, as a consequence, the existence of classical solutions to the MFG system. 
In the framework of evolutive noncoercive MFG, we quote here the results in \cite{Mannucci_2020, Mannucci_2022, Achdou_2022-1, Achdou_2022, bib:CM, Mendico} but in these cases a key assumption is that the coefficients in the dynamics grow at most linearly.
Up to the authors' knowledge, no results are available for parabolic MFG problem in unbounded domain for general Lie groups. 

Next, we give an overview of the main results proved in this paper and the strategy of proofs.

\subsection{Main results and strategy of proof}

\subsubsection{Fokker-Plank equation}

The first problem we address is the existence of classical solutions to the Fokker-Planck equation 
\begin{equation}\label{eq:FP1}
\begin{cases}
\partial_{t} \rho - \sigma \Delta_{\GG} \rho - \text{div}_{\GG}(b(t, x)\rho)=0, & (t, x) \in (0,T) \times \R^{d}
\\
\rho(0,x)=\rho_{0}(x),& x\in\R^d
\end{cases}
\end{equation}
where $T>0$, $\sigma>0$, $\Delta_{\GG}$ and $\text{div}_{\GG}$ are respectively the horizontal Laplacian and the horizontal divergence with respect of a family of left invariant vector fields $\{X_i\}_{i=1,\dots m}$ associated to a homogeneous Lie group (see Section  \ref{subell}) and the drift $b$ is regular and bounded w.r.t. these vector fields.
Note that, with such regularity assumption on $b$, equation \eqref{eq:FP1} can be written as
\begin{equation*}
\partial_{t} \rho - \sigma \Delta_{\GG} \rho - \diver_{\GG} \big(b(t, x)\big)\rho-b(t, x)\nabla_{\GG}\rho=0 \quad (t, x) \in (0,T) \times \R^{d}.
\end{equation*}

We show the existence of a global classical solution to \eqref{eq:FP1} which, besides its own interests, it will be a fundamental tool to study the Hamilton-Jacobi equation looking at \eqref{eq:FP1} as the dual of such equation for a suitable choice of the drift $b$. It will also play a crucial role in the study of MFG. 
In this case  the model we have in mind is obtained taking 
\begin{equation*}
b(t,x)= \gamma |\nabla_{\GG} u|^{\gamma-2}\nabla_{\GG}u
\end{equation*}
where  $\gamma\geq 2$, $u$ is the solution to a Hamilton-Jacobi equation and $\nabla_{\GG}$ is the horizontal gradient.\\
We also find uniqueness of the solution of the
 Fokker-Planck equation 
by showing uniqueness of the solution to a general linear equation with bounded coefficients. In our opinion this result has its own interest. Let us remark that in \cite{Cinti_2009} a similar result is obtained with a different approach under stronger assumptions on the coefficients and for a different linear subelliptic equation.
\\
 Moreover we also find an H\"older regularity result w.r.t. the flat Wasserstein metric $d_0$ which will be defined in subsection \ref{holdersub}.

Throughout this paper the space $C^{1,\nu}_{\GG}(\R^d)$, defined in Section \ref{subell}, is the H\"older space w.r.t. the horizontal derivatives and the metric associated to the group.\\
\begin{theorem}\label{thm:ex_FP1}
Consider a nonnegative continuous function $\rho_0\in L^\infty(\R^d)$ such that $\int_{\R^d}\rho_0 dx=1$ and  for some $\delta\in(0,1]$
a drift function 
$$b \in C([0,T]; (C^{1,\delta}_{\GG}(\R^{d}))^m)\ {\text{with}} \
\sup_{t\in[0,T]}\|b(t,\cdot)\|_{(C^{1,\delta}_{\GG}(\R^{d}))^m}<+\infty.$$ 
Then, equation~\eqref{eq:FP1} has a unique classical bounded solution~$\rho \in C((0,T); C^{2,\delta}_{\GG, loc}(\R^{d}))$.
Moreover, the following hold. 
\begin{itemize}
\item[($i$)] $0\leq \rho\leq \|\rho_0\|_{L^\infty(\R^d)}$ and $\int_{\R^d}\rho(t, x)\, dx=1$ for every $t\in [0,T]$.
\item[($ii$)]  For every $\varphi\in C([0,T];W^{1,2}_{\GG}(\R^d))$ with $\partial_t\varphi\in L^2(0,T;(W^{1,2}_{\GG}(\R^d))')$ we have
\begin{multline*}
\int_{\R^d}\rho(x,t)\varphi(x,t)\, dx- \int_{\R^d}\rho_0(x)\varphi(x,0)\, dx -
\iint_{(0,t)\times \R^d}\partial_t\varphi (t, x)\rho (t, x) dxdt \\ +\iint_{(0,t)\times \R^d}\nabla_{\GG}\varphi (t, x)\left(\sigma \nabla_{\GG}\rho (t, x)+b (t, x)\rho\right)\, dxdt =0.
\end{multline*}
\item[($iii$)] There exists a constant $C_\rho \geq 0$ such that 
\begin{equation*}
d_0 (\rho_t, \rho_s) \leq C_{\rho}|t-s|^{\frac{1}{2}} \qquad \forall t, s \in [0,T]
\end{equation*}
where $\rho_t$ is the measure associated to the density $\rho(t, \cdot)$.
\end{itemize}
\end{theorem}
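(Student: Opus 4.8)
The plan is to construct the solution by a parabolic approximation/compactness argument exploiting the fact that, although $\Delta_\GG = \sum_{i=1}^m X_i^2$ is degenerate, it is hypoelliptic (the $X_i$ satisfy H\"ormander's condition on the homogeneous group), so the heat operator $\partial_t - \sigma\Delta_\GG$ admits a smooth convolution kernel and $L^p$/Schauder estimates in the intrinsic H\"older spaces $C^{k,\delta}_{\GG,loc}$. First I would rewrite \eqref{eq:FP1} in the non-divergence-adjoint form displayed just before the theorem, $\partial_t\rho - \sigma\Delta_\GG\rho - b\cdot\nabla_\GG\rho - (\diver_\GG b)\rho = 0$, so that it is a genuine linear second-order subelliptic parabolic equation with coefficients $b, \diver_\GG b$ that are bounded and $\delta$-H\"older in $x$, continuous in $t$. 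For such an equation one has existence of a fundamental solution and, via the intrinsic Schauder theory on homogeneous groups (e.g. Bramanti--Brandolini and the references in \cite{Bramanti_2010}, \cite{Bonfiglioli}), a bounded classical solution $\rho\in C((0,T);C^{2,\delta}_{\GG,loc}(\R^d))$ with initial datum $\rho_0$; one way to organise this cleanly is to freeze coefficients, solve on an exhausting sequence of bounded cylinders with the kernel, and pass to the limit using interior Schauder estimates that are uniform on compact subsets away from $t=0$.

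The qualitative properties would be proven in the following order. For ($ii$), the weak (very-weak) formulation: I would test the PDE against $\varphi$ and integrate by parts in the horizontal variables; since $X_i$ are left-invariant divergence-free vector fields on the group (unimodularity), $\int X_i\psi\,dx = 0$ and the integration by parts $\int (X_i\rho)\varphi = -\int\rho\,X_i\varphi$ is exactly the flat one, so \eqref{eq:FP1} in divergence form yields the stated identity for all admissible $\varphi$ by the density of smooth functions in $C([0,T];W^{1,2}_\GG)$ and a standard Steklov-averaging in $t$ to handle $\partial_t\varphi$. For ($i$), nonnegativity and the upper bound $\rho\le\|\rho_0\|_{L^\infty}$ follow from the maximum principle for the subelliptic parabolic operator (comparing with the constant supersolution $\|\rho_0\|_\infty$ and the subsolution $0$; the growth of $b$ is controlled so no Phragm\'en--Lindel\"of obstruction arises because $\rho$ is assumed bounded), and conservation of mass $\int\rho(t)\,dx=1$ follows by taking $\varphi\equiv 1$ in ($ii$) — here one must first justify that $\rho(t,\cdot)\in L^1$ with controlled tails, which comes from the Gaussian-type upper bound on the hypoelliptic heat kernel (Varopoulos--Saloff-Coste--Coulhon type estimates, see \cite{Alexopoulos_2002}) combined with a Duhamel/Gronwall estimate absorbing the zeroth-order term $\diver_\GG b$.

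For ($iii$), the $\tfrac12$-H\"older bound in the flat Wasserstein distance $d_0$ (which, as in \cite{bib:CM}, is dual to Lipschitz test functions for the flat metric): fix $\phi$ that is $1$-Lipschitz w.r.t. the Euclidean distance, bounded, and smooth; then $\frac{d}{dt}\int\phi\,\rho(t)\,dx = \int(\sigma\Delta_\GG\phi + b\cdot\nabla_\GG\phi)\rho\,dx$. Integrating in $t$ and using $\|\nabla_\GG\phi\|_\infty\le C\|D\phi\|_\infty$ and the mass bound one already controls the $b\cdot\nabla_\GG\phi$ term by $C|t-s|$; the $\sigma\Delta_\GG\phi$ term involves second derivatives of $\phi$, so the standard trick is to regularise $\phi$ at scale $\lambda$ (mollification/heat-semigroup at time $\lambda^2$), pay $O(\lambda)$ for the regularisation error and $O(|t-s|/\lambda)$ for the Laplacian term, and optimise $\lambda\sim|t-s|^{1/2}$, yielding $|\int\phi(\rho_t-\rho_s)|\le C_\rho|t-s|^{1/2}$; taking the supremum over such $\phi$ gives the claim.

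Finally, uniqueness is, as the authors announce, the delicate point and the one I expect to be the main obstacle: for the degenerate operator one cannot simply use an energy estimate in $L^2$ for the difference of two bounded solutions because the lack of ellipticity in the missing directions obstructs the usual Gronwall argument, and the unbounded domain requires care with tails. The plan is to reduce uniqueness of \eqref{eq:FP1} to a uniqueness statement for the general linear subelliptic equation with bounded coefficients (as stated in the introduction), and to prove the latter by a duality argument: given a bounded solution $w$ of the homogeneous problem with $w(0,\cdot)=0$, pair it against the solution of a suitable backward adjoint problem with smooth compactly supported terminal data — the adjoint problem is again a subelliptic parabolic equation for which the forward existence theory (with interior Schauder + kernel estimates) applies — to conclude $\int w(T)\psi = 0$ for all such $\psi$, hence $w\equiv 0$. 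Making the pairing rigorous (justifying the integration by parts on the unbounded group, controlling boundary terms at spatial infinity using the Gaussian kernel decay, and handling the $t\to 0^+$ limit) is where the anisotropic geometry really has to be fought, and is presumably where the paper invests its technical effort.
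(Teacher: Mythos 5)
Your plan for existence, the weak formulation, the $L^\infty$ bounds and the H\"older estimate in $d_0$ is essentially the paper's: the authors also truncate to balls $B_R$ (they first get a weak solution on $B_R$ via Lions' theorem, upgrade it with Rothschild--Stein $L^2$-regularity and the Bramanti--Brandolini intrinsic Schauder estimates, which are uniform in $R$ on compact subsets, and pass to the limit by monotonicity plus a diagonal argument), prove $(i)$ by comparison, and prove $(iii)$ exactly by your mollification-at-scale-$\lambda$ trick, with the mollifier built from the group dilations so that $\|\Delta_{\GG}\varphi_\eps\|_\infty\lesssim \eps^{-1}$. Two small discrepancies: the test functions in $d_0$ are $1$-Lipschitz for the Carnot--Carath\'eodory distance, not the Euclidean one (this only helps you, since $\nabla_{\GG}$ of such functions is bounded by definition, and no comparison $\|\nabla_{\GG}\phi\|_\infty\le C\|D\phi\|_\infty$ is needed); and for conservation of mass you cannot literally take $\varphi\equiv 1$ in $(ii)$, since constants are not in $W^{1,2}_{\GG}(\R^d)$. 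The paper instead first gets $\rho(t)\in L^1$ by integrating the truncated problem over $B_R$ (the boundary term has a sign), then tests with dilated cutoffs $\xi_R$, using that $\Delta_{\GG}\xi_R$ scales like $R^{-2}$ and $\nabla_{\GG}\xi_R$ like $R^{-1}$, and dominated convergence; no heat-kernel Gaussian bounds are invoked anywhere.

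The genuine divergence, and the gap, is uniqueness. You propose the classical Holmgren duality: pair a bounded solution $w$ of the homogeneous problem with the solution $\psi$ of the backward adjoint equation with compactly supported terminal data. To make that pairing rigorous on the unbounded group you would need existence for the adjoint problem together with quantitative spatial decay of $\psi$ and of $\nabla_{\GG}\psi$ (Gaussian-type bounds for the kernel of the drifted subelliptic operator), none of which is established in the paper or follows from the tools you cite; as stated, your last step is a plan rather than a proof precisely at its hardest point. The paper avoids this entirely: it proves uniqueness for the general linear equation with bounded coefficients within the class of solutions satisfying $\iint |\rho_j|\,e^{-\beta(\|x\|_{\GG}^2+1)}\,dx\,dt<\infty$ by an explicit-weight (Aronson--Besala/Holmgren-with-weight) argument. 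One checks, using the homogeneity of $\|\cdot\|_{\GG}$, that $\Phi(t,x)=e^{-(\beta_1+\bar\beta(t-\tau_0))(\|x\|_{\GG}^2+1)}$ satisfies $\partial_t\Phi+\sigma\Delta_{\GG}\Phi+B\cdot\nabla_{\GG}\Phi+(\diver_{\GG}B)\Phi\le 0$ on a short time interval for $\bar\beta$ large; then, writing $\rho=\rho_1-\rho_2$ and using the Kato-type regularization $w=\sqrt{\rho^2+\eps}$ (a subsolution of the drift equation), one tests against $\xi_R\Phi$ with cutoffs $\xi_R$, lets $R\to\infty$ using the weighted integrability, lets $\eps\to 0$, and concludes $\rho\equiv 0$ on $[\tau_0,\tau]$, iterating in time. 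This buys a self-contained proof that never requires solving, or estimating the decay of, the adjoint problem; your duality route is viable in principle but would require importing exactly the kernel estimates with drift that the weight-function argument is designed to dispense with.
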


The existence part of the statement is obtained approximating problem~\eqref{eq:FP1} by truncation and exploiting some regularity results which strongly rely on the H\"ormander condition. The uniqueness part of the statement is obtained finding a suitable subsolution used as a test function related to the fundamental solution of the horizontal heat equation.

Note that, as proved in \cite{Bell_2004} and \cite{Paoli_2016}, the measure $\{\rho_t\}_{t \in [0,T]}$ represents the law of the stochastic process
\begin{equation*}
d\xi_t = \sum_{j=1}^{m} X_j(\xi_t) \circ dB^{j}_{t} + X_0 (\xi_t)\ dt
\end{equation*}
where $\circ$ denotes the Stratonovich stochastic integral, $\{B^j\}_{j=1, \dots, m}$ are $d$ dimensional independent Brownian motions and $X_0$ is the drift term explicitly provided in \cite{Paoli_2016}. However, since the data can grow more than linearly, we are not able to provide a bound on the moments of such a process which is a key result to get the solution in the non degenerate case.

\subsubsection{Hamilton-Jacobi equation}
\label{HJ}
We are interested in studying the well-posedness of the following Hamilton-Jacobi equation
\begin{equation}\label{eq:HJ}
\begin{cases}
\partial_{t} u(t, x) - \sigma \Delta_{\GG} u(t, x) + |\nabla_{\GG} u (t, x)|^{\gamma} = F(t, x), & (t, x) \in (0,T] \times \R^{d} 
\\
u(0, x)=u_{0}(x), & x \in \R^{d},
\end{cases}
\end{equation}
with $\gamma \geq 2$ under the assumptions
\begin{align}
&F \in C^{0}([0,T]; C^{1}_{\GG}(\R^{d}))\cap L^2((0,T) \times \R^{d}) {\text{ with }}
\sup_{t\in[0,T]}\|F(t,\cdot)\|_{C^{1}_{\GG}(\R^{d})}<+\infty \label{assFu_0}\\
&u_{0} \in W^{2, \infty}_{\GG}(\R^{d}) \cap L^1(\R^{d}),\  u_0\geq 0.\label{u_0}
\end{align} 

To do this we will study a general semi-linear parabolic equation of the following type
\begin{equation}\label{eq:interestedin}
\begin{cases}
\partial_{t} u(t, x) - \sigma \Delta_{\GG} u(t, x) = f(t, x, \nabla_{\GG} u(t, x)), & (t, x) \in (0,T) \times \R^{d} 
\\
u(0, x)=u_{0}(x), & x \in \R^{d}. 
\end{cases}
\end{equation}
From the structure of \eqref{eq:HJ} we assume the following on $f: [0,T] \times \R^{d} \times \R^{m} \to \R$. 
\vspace{0.1cm}
\begin{itemize}
\item[{\bf (HP)}] $f(\cdot, x, p) \in C([0,T])$, $f(t, \cdot, p) \in C^{1}(\R^{d})$  ${\text{ with }}
\sup_{t\in[0,T]}\|f(t,\cdot,p)\|_{C^{1}_{\GG}(\R^{d})}<+\infty$
and there exist $C_{f} \geq 0$, $\gamma \geq 2$ such that 
\begin{align*}
 f(t, x, p) \leq\ & C_{f}(1+|p|^{\gamma}), \quad (t, x, p) \in [0,T] \times \R^{d} \times \R^{m},
 \\
 |f(t,x,p) - f(t,x,q)| \leq\ & C_{f}(|p|^{\gamma-1} + |q|^{\gamma-1})|p-q|, \quad t \in [0,T],\,\, x \in \R^{d},\,\, p,q \in \R^{m}.  
\end{align*} 
\item[{\bf (HP')}] there exists $C_{f} \geq 0$ such that, for every $i=1,\dots,m$, $t \in [0,T]$, $x \in \R^{d}$ and $p,q \in \R^{m}$, there hold 
\begin{align*}
&\,|X_i f(t, x, p)| \leq C_{f}(1+|p|^{\gamma}),
\\
&\,  |\partial_{p_i} f(t, x, p)| \leq C_{f}(1+|p|^{\gamma-1}), \\
& \left|X_i\left(f(t,x,p) - f(t,x,q)\right)\right| \leq C_{f}(|p|^{\gamma-1} + |q|^{\gamma-1})|p-q|,\\
&\left|\partial_{p_i} \left(f(t, x, p)-f(t, x, q)\right)\right| \leq C_{f}(|p|^{\gamma-2} + |q|^{\gamma-2})|p-q|.
\end{align*} 
\end{itemize}

The first result is the existence and uniqueness of a classical solution of \eqref{eq:interestedin} for small times.
\begin{theorem}\label{thm:existence}
Let $f$ satisfy {\bf (HP)} and {\bf (HP')} and let $u_{0} \in W^{2, \infty}_{\GG}(\R^{d})$. Then, there exists $T_{0} > 0$ such that for any $T \leq T_{0}$ there exists a unique classical solution $u \in C([0,T], W^{2,\infty}_\GG(\R^d))\cap C^{1 + \frac{\alpha}{2}, 2+\alpha}_{\GG, {\textrm{loc}}}((0,T] \times \R^{d})$ to \eqref{eq:interestedin} such that 
\[
\sup_{t \in [0,T]} \| u(t)\|_{W^{2, \infty}_{\GG}(\R^{d})} \leq \kappa(T_{0})
\]
for some $\kappa(T_{0}) > 0$. 
\end{theorem}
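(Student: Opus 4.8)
The plan is to construct the solution via a fixed-point argument (Banach contraction) on a short time interval, using the linear parabolic theory for $\partial_t - \sigma\Delta_\GG$ as the underlying solver and the growth/Lipschitz structure encoded in \textbf{(HP)}–\textbf{(HP')} to close the estimates. Concretely, for a fixed $R>0$ (to be chosen depending on $\|u_0\|_{W^{2,\infty}_\GG}$) and a small $T>0$, I would work in the closed ball
\[
\mathcal{B}_{R,T}=\Big\{v\in C([0,T];W^{2,\infty}_\GG(\R^d)) : v(0)=u_0,\ \sup_{t\in[0,T]}\|v(t)\|_{W^{2,\infty}_\GG(\R^d)}\le R\Big\},
\]
and define a map $\Phi$ sending $v\in\mathcal{B}_{R,T}$ to the solution $u=\Phi(v)$ of the \emph{linear} problem $\partial_t u-\sigma\Delta_\GG u = f(t,x,\nabla_\GG v(t,x))$, $u(0,\cdot)=u_0$. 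By \textbf{(HP)}, the source $g_v(t,x):=f(t,x,\nabla_\GG v)$ is bounded (by $C_f(1+R^\gamma)$) and, by the first bound in \textbf{(HP')} together with $\|v\|_{W^{2,\infty}_\GG}\le R$, its horizontal gradient $X_i g_v = (X_i f)(t,x,\nabla_\GG v) + \sum_j \partial_{p_j}f(t,x,\nabla_\GG v)\,X_iX_j v$ is likewise bounded uniformly on $[0,T]\times\R^d$; hence $g_v\in C([0,T];C^1_\GG(\R^d))$ with a controlled norm. One can then invoke the subelliptic heat-kernel (Hörmander/homogeneous-group) parabolic estimates — exactly the regularity machinery already used for Theorem~\ref{thm:ex_FP1} — to get that $u$ inherits $W^{2,\infty}_\GG$ bounds and the local Hölder regularity $C^{1+\alpha/2,2+\alpha}_{\GG,\mathrm{loc}}$.

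\textbf{Next I would} establish the two quantitative facts that make the fixed point work. First, \emph{invariance}: writing $u=\Phi(v)$ as $u(t)=H_t u_0+\int_0^t H_{t-s}g_v(s)\,ds$ where $H_t=e^{t\sigma\Delta_\GG}$ is the horizontal heat semigroup, and using that $H_t$ is bounded on $W^{2,\infty}_\GG$ (again from the group's dilation structure and left-invariance of the $X_i$), one gets
\[
\sup_{t\in[0,T]}\|u(t)\|_{W^{2,\infty}_\GG}\ \le\ C_0\|u_0\|_{W^{2,\infty}_\GG} + C_1\,T\,(1+R^\gamma+R\cdot R\cdot\ldots),
\]
so choosing $R:=2C_0\|u_0\|_{W^{2,\infty}_\GG}+1$ and then $T_0$ small enough that the $T$-term is $\le R/2$ forces $\Phi(\mathcal{B}_{R,T})\subseteq\mathcal{B}_{R,T}$. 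Second, \emph{contraction}: for $v_1,v_2\in\mathcal{B}_{R,T}$, the difference $w=\Phi(v_1)-\Phi(v_2)$ solves the heat equation with source $f(t,x,\nabla_\GG v_1)-f(t,x,\nabla_\GG v_2)$; the second bound in \textbf{(HP)} gives $|f(t,x,\nabla_\GG v_1)-f(t,x,\nabla_\GG v_2)|\le 2C_f R^{\gamma-1}\|\nabla_\GG(v_1-v_2)\|_\infty$, and the third and fourth bounds in \textbf{(HP')} control the horizontal derivative of this difference in terms of $\|v_1-v_2\|_{W^{2,\infty}_\GG}$. Hence $\sup_t\|w(t)\|_{W^{2,\infty}_\GG}\le C_2(R)\,T\,\sup_t\|v_1(t)-v_2(t)\|_{W^{2,\infty}_\GG}$, and shrinking $T_0$ further makes $C_2(R)T_0<1$. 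By the contraction mapping theorem there is a unique fixed point $u\in\mathcal{B}_{R,T}$, which is the desired classical solution, with $\kappa(T_0):=R$.

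\textbf{The main obstacle} I expect is not the abstract fixed-point scheme but the linear subelliptic estimates it rests on: one must show that the horizontal heat semigroup $H_t$ maps $W^{2,\infty}_\GG$ to itself with a constant bounded as $t\to0$, and that the Duhamel term gains enough regularity to land in $C^{1+\alpha/2,2+\alpha}_{\GG,\mathrm{loc}}$ when the source is only $C([0,T];C^1_\GG)$ — this is where the anisotropic geometry bites, since there is no control in the missing (commutator) directions and one has to rely on the homogeneity of the sub-Laplacian and Schauder-type estimates on homogeneous groups (the same tools underpinning Theorem~\ref{thm:ex_FP1}). A secondary technical point is the bookkeeping in $X_i g_v$: the term $\partial_{p_j}f\cdot X_iX_j v$ is exactly why one needs $v\in W^{2,\infty}_\GG$ rather than merely $C^1_\GG$, so the function space for the fixed point must be chosen at the $W^{2,\infty}_\GG$ level from the start, and uniqueness among \emph{all} classical solutions (not just those in $\mathcal{B}_{R,T}$) follows by a Gronwall argument on the difference of two solutions using the same Lipschitz bounds once a common a-priori $W^{2,\infty}_\GG$ bound is available on a possibly shorter interval.
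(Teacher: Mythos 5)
Your proposal is correct and follows essentially the same route as the paper: a Banach fixed-point argument in a ball of $C([0,T];W^{2,\infty}_\GG(\R^d))$ based on the Duhamel (mild) formulation, with invariance and contraction obtained from the $L^\infty$ boundedness and the $t^{-1/2}$ gradient decay of the subelliptic heat semigroup, the chain-rule bookkeeping $X_i f(\cdot,\cdot,\nabla_\GG v)=(X_i f)+\sum_j\partial_{p_j}f\,X_iX_jv$ controlled via \textbf{(HP)}--\textbf{(HP')}, and the final $C^{1+\alpha/2,2+\alpha}_{\GG,\mathrm{loc}}$ regularity read off from the representation formula and the regularity of the fundamental solution. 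The paper implements exactly this, placing one horizontal derivative on the source and one on the semigroup to avoid a nonintegrable singularity, which is the point you flag as the main technical obstacle.
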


In order to prove the above small-time existence of classical solutions to the semi-linear Hamilton-Jacobi equation \eqref{eq:interestedin} we use the Duhamel  formula and the decay estimate of the subelliptic heat semigroup (\Cref{decayest}). 

Next, we state the second main result on the global in time existence of classical solutions to the Hamilton-Jacobi equation \eqref{eq:HJ}. To this end, we establish an estimate of the horizontal gradient of~$u$ using the Bernstein method suitably adapted to sub-Riemannian framework. In order to apply this method, we need an extra technical assumption for a set of right-invariant vector fields.


\begin{theorem}\label{thm:existence1}
Let $F: [0,T] \times \R^d \to \R$ satisfy \eqref{assFu_0} and let $u_{0}$ satisfy \eqref{u_0}. Let $\{ Y_{1}, \dots, Y_{m}\}$ be a set of right-invariant smooth vector fields on $\R^{d}$ and assume that
\begin{equation}\label{hyp:reg_Y}
\| Y_{j} F\|_{L^{\infty}( [0, T] \times \R^{d})} +\| Y_{j} u_0\|_{L^{\infty}(\R^{d})}<\infty.
\end{equation}
Let $T > 0$ be arbitrary. Then, there exists a classical solution $u \in C^{1+\frac{\alpha}{2}, 2+\alpha}_{\GG, \text{loc}}((0,T] \times \R^{d})$ to \eqref{eq:HJ}. 
\end{theorem}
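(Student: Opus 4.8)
The plan is to combine the short-time existence from Theorem~\ref{thm:existence} with a global a priori bound on $\|\nabla_\GG u\|_{L^\infty}$ obtained by a sub-Riemannian Bernstein argument, so that the local solution can be continued to the whole interval $[0,T]$. First I would verify that the nonlinearity $f(t,x,p)=F(t,x)-|p|^\gamma$ satisfies hypotheses \textbf{(HP)} and \textbf{(HP')}: the growth and local Lipschitz bounds in $p$ are elementary for $|p|^\gamma$ with $\gamma\ge 2$, the bound on $X_i f$ reduces to $\sup_t\|F(t,\cdot)\|_{C^1_\GG}<\infty$ from \eqref{assFu_0}, and $u_0\in W^{2,\infty}_\GG(\R^d)$ follows from \eqref{u_0}. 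Hence Theorem~\ref{thm:existence} gives a unique classical solution on a maximal interval $[0,T_{\max})$, and it suffices to show that $\sup_{t\in[0,T_{\max})}\|u(t)\|_{W^{2,\infty}_\GG(\R^d)}$ stays finite; by the structure of the short-time result the obstruction to continuation is precisely a blow-up of this norm, and standard parabolic bootstrapping upgrades an $L^\infty$ bound on $\nabla_\GG u$ to the full $C^{1+\alpha/2,2+\alpha}_{\GG,\mathrm{loc}}$ regularity.

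The core of the argument is the gradient estimate. I would set $w=|\nabla_\GG u|^2=\sum_{i=1}^m (X_i u)^2$ and compute the equation satisfied by $w$. Differentiating \eqref{eq:HJ} along the horizontal fields $X_i$ and multiplying by $X_i u$, one gets a parabolic inequality of the form
\[
\partial_t w-\sigma\Delta_\GG w+2\sigma\sum_{i,j}(X_jX_i u)^2 + \gamma|\nabla_\GG u|^{\gamma-2}\nabla_\GG u\cdot\nabla_\GG w \le 2\sum_i X_i u\, X_i F + (\text{commutator terms}),
\]
where the delicate point is that $[X_i,X_j]\neq 0$, so differentiating the drift-type term $|\nabla_\GG u|^\gamma$ produces extra terms involving the structure constants of the group. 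This is exactly where the auxiliary right-invariant fields $\{Y_1,\dots,Y_m\}$ of \eqref{hyp:reg_Y} enter: because left- and right-invariant vector fields commute on a Lie group, applying $Y_j$ rather than $X_j$ to the equation kills the commutator obstruction, and the hypothesis $\|Y_jF\|_{L^\infty}+\|Y_ju_0\|_{L^\infty}<\infty$ furnishes the needed control of the resulting source terms. One then absorbs the good term $2\sigma\sum_{i,j}(X_jX_iu)^2$ — which dominates $|\nabla_\GG w|^2/w$ up to constants — against the bad first-order term $\gamma|\nabla_\GG u|^{\gamma-2}\nabla_\GG u\cdot\nabla_\GG w$ by Cauchy--Schwarz, obtaining a differential inequality for $\sup_x w(t,\cdot)$ that, together with $w(0,\cdot)\le\|\nabla_\GG u_0\|_{L^\infty}^2$, yields a bound on $\|\nabla_\GG u(t)\|_{L^\infty}$ depending only on $T$, the data, and the structure constants. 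To make the maximum-principle step rigorous on the noncompact $\R^d$ I would either work with the solutions of the truncated/approximating problems already used for Theorem~\ref{thm:existence} and pass to the limit, or introduce a suitable penalization term $\eta\,e^{\lambda t}(1+|x|^2)$ and let $\eta\to0$, using the growth control on the approximants.

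With the uniform gradient bound in hand, the equation \eqref{eq:HJ} becomes, on any finite time interval, a linear subelliptic parabolic equation $\partial_t u-\sigma\Delta_\GG u=g(t,x)$ with $g=F-|\nabla_\GG u|^\gamma\in L^\infty$, so interior subelliptic Schauder estimates (relying on the Hörmander condition, as invoked for Theorem~\ref{thm:ex_FP1}) give $u\in C^{1+\alpha/2,2+\alpha}_{\GG,\mathrm{loc}}$ locally uniformly in time, which in particular keeps $\|u(t)\|_{W^{2,\infty}_\GG}$ bounded and prevents blow-up; hence $T_{\max}=+\infty$ and the local solution extends to the arbitrary prescribed $T$. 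I expect the main obstacle to be the Bernstein computation itself: correctly tracking the commutator terms generated by the horizontal Hessian and the $\gamma$-growth nonlinearity, and showing that the auxiliary right-invariant fields genuinely neutralize them, is the technically heaviest and least routine part; the continuation argument and the Schauder bootstrap are comparatively standard once the estimate is established.
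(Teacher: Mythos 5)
Your overall architecture (short-time existence via Theorem \ref{thm:existence}, an a priori horizontal-gradient bound exploiting right-invariant fields, continuation, then a subelliptic bootstrap) matches the paper, but the central step — the gradient estimate — has genuine gaps as you sketch it. You set up the classical Bernstein function $w=|\nabla_{\GG}u|^2$ differentiated along the left-invariant $X_i$, note the commutator problem, and then assert that ``applying $Y_j$ instead kills the obstruction''; but you never close this loop. If you differentiate the equation by $Y_j$ (which indeed commutes with the $X_i$), what you bound is $Y_ju$, i.e.\ the right-invariant gradient, not $\nabla_{\GG}u$. Passing from a bound on $(Y_1u,\dots,Y_mu)$ to a bound on $(X_1u,\dots,X_mu)$ is not a pointwise algebraic identity: in the paper this requires completing both families to full Riemannian frames $\{X_j^{\eps}\}$, $\{Y_j^{\eps}\}$ (with the extra fields scaled by $\eps$), using $[X_i^{\eps},Y_j^{\eps}]=0$, the equivalence of Riemannian metrics on compact sets with constants independent of $\eps$ (\Cref{rem:Normequivalence}), and then letting $\eps\downarrow 0$. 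This conversion is only \emph{local in space}, which is why \Cref{horizontalbound} yields $\|\nabla_{\GG}u(t)\|_{L^\infty(\Omega)}\le C(\Omega)$ and why Theorem \ref{thm:existence1} only claims a $C^{1+\frac{\alpha}{2},2+\alpha}_{\GG,\mathrm{loc}}$ solution (with no global $W^{2,\infty}_{\GG}(\R^d)$ control and no uniqueness). Your claimed global differential inequality for $\sup_x w(t,\cdot)$ and the resulting global $L^\infty(\R^d)$ gradient bound do not follow from the $Y_j$-argument, and the Cauchy--Schwarz absorption of the drift term is beside the point once one works with the linear equation satisfied by $Y_ju$.

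Two further points. First, your device for making the maximum principle rigorous on $\R^d$ (penalization by $\eta e^{\lambda t}(1+|x|^2)$) is delicate here because the left-invariant fields of a homogeneous group have polynomially growing coefficients, so $\Delta_{\GG}(1+|x|^2)$ and the drift term $\gamma|\nabla_{\GG}u|^{\gamma-2}\nabla_{\GG}u\cdot\nabla_{\GG}(1+|x|^2)$ need not be controlled by the penalization; the paper sidesteps this entirely by a duality argument, pairing the differentiated equation with the Fokker--Planck solution of Theorem \ref{thm:ex_FP1} (as in \Cref{lem:dualityarg}) to obtain $\sup_t\|Y_ju\|_{L^\infty}\le\|Y_jF\|_{L^\infty}+\|Y_ju_0\|_{L^\infty}$, and the same duality gives the uniform bound on $\|u\|_{L^\infty}$ (\Cref{continuousbound}), which you omit but which is needed for the continuation. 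Second, since the gradient bound is only local, your continuation step (``Schauder keeps $\|u(t)\|_{W^{2,\infty}_{\GG}(\R^d)}$ bounded, hence $T_{\max}=\infty$'') is not justified as stated; the paper instead continues the solution à la Davini using the $\sigma$-uniform bounds on $u$ and on $\nabla_{\GG}u$ (locally), and only then bootstraps on balls $B_R$ via maximal $L^p$ regularity (\Cref{lamberton}), the embedding of \Cref{immersion}, and Schauder estimates, concluding by a diagonal argument. So the skeleton is right, but the proof as proposed would fail at the passage from the right-invariant bound to the horizontal gradient, at the unbounded-coefficient maximum principle, and at the global continuation claim.
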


To obtain the global in time existence, we first show that the solution is bounded in space and time--uniformly w.r.t. the second order coefficient $\sigma$--(\Cref{continuousbound}) and that also its horizontal gradient is bounded globally in time and locally in space (\Cref{horizontalbound})--uniformly w.r.t. $\sigma$. The common main technical point of the above results is the use of the duality between Hamilton-Jacobi equation and Fokker-Plank equation: given a suitable choice of the drift function and the corresponding solution to the Fokker-Plank equation we deduce properties for the solution $u$. This duality property have been investigated in several other settings, see for instance \cite{Cirant_2020}, \cite{Evans_2003}, \cite{Gomes_2016} . Moreover, in order to get the estimate on the horizontal gradient we use the Bernstein method 
adapted to the sub-Riemannian framework. More precisely, one cannot apply a vector field generating the Lie group to the equation because, otherwise, one will get some extra terms involving commutators which are difficult to deal with. In order to overcome this issue, we first differentiate the equation by the family of smooth right-invariant vector fields introduced in the statement. Hence, we obtain a local bound for the gradient of the solution on the distribution generated by such right-invariant vector fields. Finally, since the right-invariant distance is locally Lipschitz equivalent to the left-invariant one, we get the desired estimate. 

We conclude this part, observing that the assumption $\gamma \geq 2$ is needed in order to gain regularity on the solution to the Hamilton-Jacobi equation when the initial data is regular enough. In a forthcoming paper, we will address a similar problem investigating the sub-quadratic case with merely local integrability of the solution. In this case, the integrability assumptions of the drift of the transport equation and the duality approach still allow us to obtain classical solutions to the Fokker-Plank equation, to the Hamilton-Jacobi equation and, consequently, to the MFG system. 

\subsubsection{Mean Field Games}
We conclude this work investigating the existence for small times of solutions to the MFG system
\begin{equation}\label{MFG}
\begin{cases}
-\partial_{t} u(t, x) - \sigma \Delta_{\GG} u(t, x) + |\nabla_{\GG} u (t, x)|^{\gamma} = F_{\textrm{\tiny {MFG}}}[\rho] (x), & (t, x) \in (0,T) \times \R^{d} 
\\
\partial_{t} \rho_{t} - \sigma \Delta_{\GG} \rho - \text{div}_{\GG}(\gamma |\nabla_{\GG} u(t, x)|^{\gamma-2} \nabla_{\GG}u(t, x)\rho)=0, & (t, x) \in (0,T) \times \R^{d}
\\
u(T, x) = u_{T}(x), \quad \rho(0,x)=\rho_{0}(x), & x \in \R^d. 
\end{cases}
\end{equation}
Such a system couples the equations we have previously studied, i.e., the Hamilton-Jacobi equation in \eqref{eq:HJ} in which the potential $F_{\textrm{\tiny {MFG}}}$ is a strongly regularizing nonlocal term which depends on the distribution $\rho$ and the Fokker-Planck equation in \eqref{MFG} whose drift is defined by the optimal feedback associated with \eqref{eq:HJ}. In the following, we consider a coupling function $F_{\textrm{\tiny {MFG}}}: \PP(\R^d) \times \R^d \to \R$ and we assume the following:
\begin{itemize}
\item[{\bf (MFG)}] the map $\rho \mapsto F_{\textrm{\tiny {MFG}}}[\rho](x)$ is Lipschitz continuous w.r.t. $d_0$ from $\PP(\R^d)$ to $C^1_{\GG}(\R^d)$ and the function $F_{\textrm{\tiny {MFG}}}[\rho](\cdot)$ is bounded in $C^1_{\GG}(\R^d) \cap L^2(\R^d)$ uniformly w.r.t. $\rho$.
\end{itemize}

MFG theory, introduced in \cite{bib:LL1, bib:LL2, bib:LL3}, is devoted to the study of differential games with a very large number of interacting agents. A typical model is described by a system of PDEs: a backward-in-time Hamilton-Jacobi equation whose solution is the value function for the generic player (and also provides their optimal choices) and a forward-in-time Fokker-Planck equation which describes how the distribution of individuals changes. The two equations are coupled in a way that takes into account both the state of a single agent and how he/she is influenced by the others. The system of PDEs describes a model with a continuum of players which is, clearly, not realistic. However, the solution of such a system is expected to capture the behavior of Nash equilibria for differential N-players game as the number of agents goes to infinity. For an extensive and detailed introduction to the subject we refer to  \cite{bib:BFY}, \cite{bib:NC}, \cite{bib:DEV} and references therein. 

The motivation for studying our model is the following: 
heuristically, given $X_1$, $\dots, X_m$ smooth vector fields on $\R^d$, each single player can move only along the directions generated by the set of vector fields. 
In \cite{Dragoni_2018,Gomes_2020}  stationary subelliptic MFG systems have been studied in the torus.
We also cite the papers \cite{Mannucci_2020, Mannucci_2022, Achdou_2022, bib:CM, Mendico}, where, in the whole space, the generic player has some forbidden directions because it follows either a dynamic generated by the vector fields on Heisenberg group, or a Grushin dynamic or it controls its acceleration.
In particular, the results in \cite{Mannucci_2020, Mannucci_2022} are obtained under the key assumption that the coefficients of the vector fields $X_i$ grow at most linearly.
Up to the authors' knowledge, no results are available for parabolic MFG problem in unbounded Lie groups. Clearly, the unboundedness of the state space, eventually with unbounded vector fields, gives rise to several difficulties to overcome.

From the optimization point of view, a generic player wants to minimize the cost 
\begin{equation*}
\mathbb{E} \left[\int_{t}^{T} (C|\alpha(s)|^{\gamma^{*}} + F_{\textrm{\tiny {MFG}}}[\rho(s)](\xi(s))\ ds + u_T (\xi(T)) \right]
\end{equation*}
where $\gamma^{*}$ is the conjugate index of $\gamma$ and $\xi(\cdot)$ denotes the stochastic process that solves 
\begin{equation*}
d \xi(s) = \alpha(s) ds + \sum_{j=1}^{m} X_j(\xi(s)) \circ dW_j
\end{equation*}
where $\alpha$ is the control chosen by the player while $\circ$ and $W_j$ are as before.\\
We prove a small time existence result of a classical solution. Note that we cannot prove the existence for any time $T$ because of the lack of compactness in the results of Theorem \ref{thm:existence1}.\\
\begin{theorem}\label{MFGmain}
Assume $\gamma \geq 2$. Let $u_{T} \in W^{2, \infty}(\R^{d})$, $\rho_{0} \in L^{\infty}(\R^d) \cap L^1(\R^d)$ such that $\rho_0\geq0$ and $\int_{\R^d} \rho_0(x)\ dx = 1$ and let $F_{\textrm{\tiny {MFG}}}$ satisfies {\bf (MFG)}. Then, there exists $T_0 > 0$ such that for any $T \leq T_0$ there exists a classical solution $(u, \rho) \in C((0,T); C^{2+\alpha, 1+\frac{\alpha}{2}}_{\GG, {\textrm{loc}}}(\R^{d})) \times C((0,T); C^{2,\nu}_{\GG, {\textrm{loc}}}(\R^{d}))$ to \eqref{MFG}. 
\end{theorem}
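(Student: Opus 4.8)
The plan is to set up a fixed-point argument on the distribution variable, using the small-time existence results already established for the two decoupled equations. Let $R>0$ be a radius to be fixed later and consider the closed convex set
\[
\mathcal{K}=\Bigl\{\rho\in C([0,T];\PP(\R^d)):\ d_0(\rho_t,\rho_s)\le R\,|t-s|^{1/2}\ \ \forall t,s\in[0,T]\Bigr\},
\]
equipped with the topology of uniform convergence in the $d_0$ metric; by Ascoli--Arzel\`a (using that the $\rho_t$ have mass one and a uniform $L^\infty$ bound) this set is compact. The map $\Phi$ I would iterate is defined as follows: given $\rho\in\mathcal K$, set $F(t,x)=F_{\textrm{\tiny MFG}}[\rho(t)](x)$; by {\bf (MFG)} this satisfies \eqref{assFu_0} with bounds uniform in $\rho$, so Theorem~\ref{thm:existence} (applied to the backward Hamilton--Jacobi equation with terminal data $u_T\in W^{2,\infty}_\GG$, after the time reversal $t\mapsto T-t$, noting that $f(t,x,p)=F(t,x)-|p|^\gamma$ satisfies {\bf (HP)}--{\bf (HP')}) yields, for $T\le T_0$, a unique classical solution $u^\rho\in C([0,T];W^{2,\infty}_\GG(\R^d))\cap C^{1+\alpha/2,2+\alpha}_{\GG,\mathrm{loc}}$ with $\sup_t\|u^\rho(t)\|_{W^{2,\infty}_\GG}\le\kappa(T_0)$. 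Then the drift $b(t,x)=\gamma|\nabla_\GG u^\rho|^{\gamma-2}\nabla_\GG u^\rho$ lies in $C([0,T];(C^{1,\delta}_\GG)^m)$ with a bound controlled by $\kappa(T_0)$, so Theorem~\ref{thm:ex_FP1} gives a unique classical solution $\rho^\rho=\Phi(\rho)$ of the Fokker--Planck equation with $\rho(0)=\rho_0$, and part~$(iii)$ of that theorem furnishes the H\"older-in-time bound $d_0(\Phi(\rho)_t,\Phi(\rho)_s)\le C_\rho|t-s|^{1/2}$ with $C_\rho$ depending only on $\kappa(T_0)$ and $\|\rho_0\|_{L^\infty}$. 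Choosing $R=C_\rho$ makes $\Phi$ map $\mathcal K$ into itself.

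Next I would verify continuity of $\Phi$. Suppose $\rho^n\to\rho$ in $\mathcal K$. By {\bf (MFG)}, $F^n(t,\cdot)=F_{\textrm{\tiny MFG}}[\rho^n(t)]\to F(t,\cdot)$ in $C^1_\GG(\R^d)$ uniformly in $t$ (Lipschitz dependence on $d_0$). The uniform $W^{2,\infty}_\GG$ bound on $u^{\rho^n}$ together with the interior Schauder-type estimates in $C^{1+\alpha/2,2+\alpha}_{\GG,\mathrm{loc}}$ gives compactness; any subsequential limit solves the same Hamilton--Jacobi equation with data $(u_T,F)$, which by the uniqueness in Theorem~\ref{thm:existence} must be $u^\rho$, so the whole sequence converges, and in particular $\nabla_\GG u^{\rho^n}\to\nabla_\GG u^\rho$ locally uniformly. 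Hence the drifts $b^n\to b$ in $C([0,T];(C^{1,\delta'}_{\GG,\mathrm{loc}})^m)$ for any $\delta'<\delta$ with uniform bounds; passing to the limit in the Fokker--Planck equation and invoking the uniqueness part of Theorem~\ref{thm:ex_FP1} shows $\Phi(\rho^n)\to\Phi(\rho)$ in $\mathcal K$. Schauder's fixed point theorem then produces $\rho^*\in\mathcal K$ with $\Phi(\rho^*)=\rho^*$, and the pair $(u^{\rho^*},\rho^*)$ is a classical solution of \eqref{MFG} on $(0,T)$ for every $T\le T_0$, with the stated regularity $u\in C((0,T);C^{2+\alpha,1+\alpha/2}_{\GG,\mathrm{loc}})$ and $\rho\in C((0,T);C^{2,\nu}_{\GG,\mathrm{loc}})$ read off from Theorems~\ref{thm:existence} and~\ref{thm:ex_FP1}.

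The main obstacle I anticipate is the continuity of the solution map in the degenerate (subelliptic) setting: one must ensure that the local compactness and Schauder estimates used in Theorems~\ref{thm:ex_FP1} and~\ref{thm:existence} are quantitatively uniform over the family $\{\rho^n\}$, so that the convergences can genuinely be passed to the limit, and that the nonlinearity $|p|^{\gamma-2}p$ (only locally Lipschitz when $\gamma>2$) is controlled by the uniform gradient bound $\kappa(T_0)$ rather than merely pointwise. A secondary technical point is the interplay of the two time directions (the Hamilton--Jacobi equation runs backward from $t=T$, the Fokker--Planck forward from $t=0$): the time-reversal must be done carefully so that the smallness of $T$ in Theorem~\ref{thm:existence} is the \emph{same} $T_0$ as the horizon in the coupled system, which is why the statement only claims short-time existence. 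Finally, one should double-check that the $L^\infty$ and mass bounds on $\rho$ coming from part~$(i)$ of Theorem~\ref{thm:ex_FP1} are exactly what is needed to close the compactness of $\mathcal K$ and to verify the hypotheses of {\bf (MFG)} along the iteration.
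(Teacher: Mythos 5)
Your scheme (freeze $\rho$, solve the backward Hamilton--Jacobi equation via Theorem~\ref{thm:existence}, feed the resulting drift into Theorem~\ref{thm:ex_FP1}, and close with Schauder) is the classical MFG strategy, but it has a genuine gap exactly at the point where the paper deliberately deviates from that strategy: the compactness of your set $\mathcal{K}$. Having mass one and a uniform $L^\infty$ bound on the densities does \emph{not} make $\{\rho_t:\rho\in\mathcal K\}$ relatively compact in $(\PP(\R^d),d_0)$: the flat metric metrizes weak convergence only on tight families, and tightness can fail because mass may escape to infinity (e.g.\ normalized indicators of unit balls centered at $x_n\to\infty$ have mass one, uniform $L^\infty$ norm, and no subsequence converging in $d_0$ to a probability measure). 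Nor can you recover relative compactness of $\Phi(\mathcal K)$ from the equation itself: the natural route would be uniform moment bounds for the Fokker--Planck solutions, and the paper explicitly points out (after Theorem~\ref{thm:ex_FP1}) that such bounds are unavailable here because the vector fields $X_j$ may grow more than linearly; this is precisely why the authors say the strategy must differ from the classical literature. So the Ascoli--Arzel\`a step, and hence the application of Schauder's theorem to $\Phi$ on $\mathcal K$, does not go through as written.

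The paper circumvents this by placing the fixed point on the value function rather than on the measure: it iterates $u\mapsto\mu\mapsto\psi=\TT(u)$ on the convex set $\mathcal C=\{u:\sup_t\|u(t)\|_{W^{2,\infty}_{\GG}}\le\kappa(T_0)\}\subset C([0,T];W^{2,\infty}_{\GG}(\R^d))$ with the uniform-convergence topology, and obtains compactness of $\TT(\mathcal C)$ from interior Schauder estimates for the subelliptic Hamilton--Jacobi equation (local bounds in $C^{2+\alpha,1+\frac\alpha2}_{\GG}$ via \cite{BramantiBrandolini_07}); the H\"older-in-$d_0$ regularity of $\mu$ from \Cref{holder} and assumption {\bf (MFG)} are only used to get well-definedness and continuity of $\TT$. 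Your verification that the drift $\gamma|\nabla_{\GG}u|^{\gamma-2}\nabla_{\GG}u$ meets the hypotheses of Theorem~\ref{thm:ex_FP1} and your continuity argument via uniqueness are in the right spirit and reappear in the paper's proof, but to repair your argument you would either have to prove a tightness/uniform-integrability estimate for the Fokker--Planck solutions (not available in this superlinear-growth setting) or restructure the iteration so that the compactness is carried by $u$, as the paper does.
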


\section{Sub-Riemannian setting}
%
%

Let $(\GG, *)$ be a homogeneous Lie group and let $\{\delta_{\lambda}\}_{\lambda > 0}$ be a family of dilations which are automorphisms of the group, i.e., 
\begin{equation*}
\delta_\lambda(x*y)=\delta_\lambda(x)*\delta_\lambda(y)
\end{equation*}
for all $x, y \in \GG$ and $\lambda > 0$. Let $\g$ be the Lie algebra associated with the group $\GG$. The dilations of the group induce a direct sum decomposition on $\g$, i.e., 
\begin{equation*}
\g = V_1 \oplus \dots \oplus V_k.
\end{equation*}
In particular, $V_1$ is called the horizontal layer and its elements are left-invariant vector fields. We can identify $(\GG, *)$ with $\R^d$ via the so-called exponential map 
\begin{equation*}
exp: \g \to \GG 
\end{equation*}
which turns out to be a diffeomorphism. Given a basis $X_1$, $\dots, X_d$ adapted to the stratification, any $x \in \GG$ can be written in a unique way as 
\begin{equation*}
x= exp(x_1X_1+\dots + x_dX_d)
\end{equation*}
and one can identify $x$ with $(x_1, \dots, x_d)$ and $\GG$ with $(\R^d, \circ)$ where the group law is given by the Baker-Campbell-Hausdorff formula. 

Hence, hereafter we work on $\R^d$ and we consider an orthonormal basis $X_1, \dots, X_m$ of the horizontal layer $V_1$. 
We assume that such a family of vector fields satisfies the H\"ormander condition, i.e.,
\begin{equation*}
\text{Lie}(X_1, \dots X_m)(x)=\R^d \quad \text{for all}\,\, x \in \R^d. 
\end{equation*}
and to be homogeneous of degree one w.r.t. the family of dilations. 

Moreover, given $a, b \in \R$ we say that an absolutely continuous curve $\gamma: [a,b] \to \R^d$ is horizontal if there exist $\alpha_1, \dots, \alpha_m \in L^1(a, b)$ such that 
\begin{equation*}
\dot \gamma(t) = \sum_{j=1}^{m} \alpha_j(t) X_j(\gamma(t)), \quad \text{a.e.}\,\, t \in [a,b]
\end{equation*} 
and the length of $\gamma$ is defined as
\begin{equation*}
\ell(\gamma) = \int_{a}^{b} |\alpha(t)|\ dt. 
\end{equation*}
Under the H\"ormander condition, a well-known result by Chow states that any two points on $\R^d$ can be connected by an horizontal curve. Hence, the definition of Carnot-Carath\'eodory distance is well-posed
\begin{equation*}
d_{\SR}(x, y) = \inf\{\ell(\gamma): \gamma\,\,\text{is an horizontal curve joining}\,\, x\,\,\text{to}\,\, y\}. 
\end{equation*}
One can prove a variational interpretation of the above distance as
\begin{equation*}
\inf\{T > 0 :\, \exists \gamma:[0,T]\to \R^d, \text{ horizontal and joining}\,\, x\,\, \text{to}\,\, y\,\, \text{with}\,\, |\gamma(t)| \leq 1\}.
\end{equation*}
Note that, the Carnot-Carath\'eodory distance is not equivalent to the Euclidean one. Indeed, it is well-known that, for any $K$ compact set, there exists a constant $C > 0$ such that, for any $y$ and $x$ in $K$, we have
\begin{equation*}
\frac{1}{C}|x-y| \leq d_{\SR}(x,y) \leq C|x-y|^{\sigma(x)}
\end{equation*}
where $\sigma(x) \in \N$ is the nonholonomic degree at $x \in \R^d$, that is, the maximum of the degrees of the iterated brackets occurring to fulfill the H\"ormander condition. 
Using the family of dilations and the sub-Riemannian distance one can define a norm on $\R^d$ tailored from the Lie group, 
\begin{equation*}
\|x\|_{\SR} = d_{\SR}(0,x). 
\end{equation*}
However, from the homogeneity of the vector fields $X_1, \dots, X_m$ and the stratification of $\R^d$, one can define an homogeneous norm $\|\cdot \|_{\GG} $ and the homogeneous dimension $Q$ of the group as
\begin{equation*}
\|x\|_{\GG} = \left(\sum_{j=1}^{k}|x_j|^{\frac{2k!}{j}} \right)^{\frac{1}{2k!}} \quad \text{and} \quad Q =\sum_{j=1}^{k} j\  \text{dim}\ V_j. 
\end{equation*}

\begin{example}\em
Examples of homogeneous Lie groups are Heisenberg-type groups, Engel group and Martinet group (see, for instance, \cite{Montgomery}). 
\end{example}

For completeness, we write the following result on homogenous functions of Lie groups which will be useful later. 

\begin{lemma}\label{lem:homog}
Let $g: \R^d \to \R$ be a homogeneous function of degree 1 and assume 
$|g(x)| \leq c$
for any $\|x \|_{\GG} \leq 1$. Then
$
| g(x)| \leq c\|x\|_{\GG},
$
for all $x \in \R^d$. 
\end{lemma}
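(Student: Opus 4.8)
The plan is to exploit the interplay between the group dilations $\{\delta_\lambda\}_{\lambda>0}$ and the homogeneous norm $\|\cdot\|_{\GG}$. Recall that $g$ being homogeneous of degree $1$ (with respect to the dilations) means $g(\delta_\lambda x)=\lambda\, g(x)$ for every $x\in\R^d$ and $\lambda>0$; moreover, since $\delta_\lambda$ acts on the $j$-th stratum by multiplication by $\lambda^{j}$, a one-line computation from
\[
\|x\|_{\GG}=\Big(\sum_{j=1}^{k}|x_j|^{\frac{2k!}{j}}\Big)^{\frac{1}{2k!}}
\]
gives the scaling identity $\|\delta_\lambda x\|_{\GG}=\lambda\,\|x\|_{\GG}$ for all $\lambda>0$. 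I would record this identity explicitly at the outset, since it is the only point in the argument that needs a (short) verification.

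First I would dispose of the origin: applying homogeneity with $x=0$ and $\delta_\lambda 0=0$ yields $g(0)=\lambda\,g(0)$ for every $\lambda>0$, hence $g(0)=0$, so the claimed inequality holds trivially at $x=0$. Then, for $x\neq 0$, I would set $\lambda:=\|x\|_{\GG}^{-1}>0$, so that $\|\delta_\lambda x\|_{\GG}=\lambda\|x\|_{\GG}=1$. The hypothesis applies to the point $\delta_\lambda x$ and gives $|g(\delta_\lambda x)|\leq c$, while homogeneity gives $g(\delta_\lambda x)=\lambda\, g(x)$; combining the two yields $\lambda\,|g(x)|\leq c$, i.e. $|g(x)|\leq c/\lambda=c\,\|x\|_{\GG}$, as desired.

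Since every step is either an identity or a one-line estimate, there is no genuine obstacle here: the proof is essentially a rescaling argument, and the only place to be slightly careful is the scaling behaviour $\|\delta_\lambda x\|_{\GG}=\lambda\|x\|_{\GG}$ of the homogeneous norm under the group dilations.
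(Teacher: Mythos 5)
Your proof is correct and follows essentially the same rescaling argument as the paper: dilate the point to the unit sphere of $\|\cdot\|_{\GG}$, apply the hypothesis there, and use degree-one homogeneity of $g$ and of the norm to scale back. The only (minor) difference is that you treat $x=0$ explicitly, which the paper's proof leaves implicit.
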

\proof 

Take $y \in \R^d$. Then 
\begin{equation*}
|g(y)| = \left|g\left(\delta_{\|y\|_{\GG}} \delta_{\frac{1}{\|y\|_{\GG}}}(y)\right) \right| = \|y\|_{\GG} \left|g\left( \delta_{\frac{1}{\|y\|_{\GG}}}(y)\right)\right|. 
\end{equation*}
So, we conclude, observing that
\begin{equation*}
\| \delta_{\frac{1}{\|y\|_{\GG}}}(y)\|_{\GG} \leq \frac{1}{\|y\|_{\GG}} \|y\|_{\GG} \leq 1. \eqno\square
\end{equation*}

\subsection{Subelliptic equations}\label{subell}

Let us consider a family of smooth left-invariant vector fields $X_1, \dots, X_m$ as in the beginning of this section, satisfying the H\"ormander condition
and a function $u:\R^d \to \R$. We define the horizontal gradient and the horizontal Laplacian of $u$ as
\begin{equation*}
\nabla_{\GG} u= (X_1 u, \dots, X_m u)^{T} \in \R^m
\end{equation*}
and respectively
\begin{equation*}
\Delta_{\GG} u = \sum_{i=1}^{m} X^2_i u \in \R. 
\end{equation*}
For any vector-valued function $u: \R^d \to \R^m$, we will consider the divergence operator induced by the vector fields, that is, 
\begin{equation*}
\diver_{\GG} u = X_1 u_1 + \dots + X_m u_m
\end{equation*}
where $u_i$ denotes the $i$-th component of $u$ for $i=1, \dots, m$. Next, we recall the definition of H\"older space associated with the family of vector fields. For every multi-index $J=(j_1, \dots, j_m) \in \N^m$ let $X^J = X_1^{j_1} \cdots X_m^{j_m}$ and let $|J|=j_1 + \cdots + j_m$ be the length of multi-index.\\
We introduce $C^{0}(\R^d)$ as the set of continuous (possibly unbounded) functions on $\R^d$ and we associate the norm
$\| u\|_{C^{0}(\R^d)}= \| u\|_{L^{\infty}(\R^d)}$.\\
Moreover for $\alpha \in (0,1]$, given $u\in C^{0}(\R^d)$ and $U\subset \R^d$ we define the seminorm 
\begin{equation*}
[u]_{C_{\GG}^{0,\alpha}(U)} = \sup_{\substack{ x, y \in U \\ x \not= y}} \frac{|u(x) - u(y)|}{d_{\SR}(x, y)^{\alpha}},
\end{equation*}
where $d_{\SR}$ is the Carnot-Carath\'eodory distance defined before.
We introduce 
\begin{equation*}
C^{0,\alpha}_{\GG}(\R^d)= \left\{u \in C^0(\R^d): [u]_{C_{\GG}^{0,\alpha}(U)} < \infty, {\textrm{ for every compact }} U\subset \R^d  \right\}
\end{equation*}
and the corresponding norm
$$
\|u\|_{C_{\GG}^{0,\alpha}(\R^d)}:=\| u\|_{C^{0}(\R^d)}+ [u]_{C_{\GG}^{0,\alpha}(\R^d)}.$$
Similarly, for $r \in \N$ and $\alpha \in (0,1]$ we define 
\begin{equation*}
C^{r,\alpha}_{\GG}(\R^d)= \left\{u \in C^0(\R^d): X^{J}u\in C^{0,\alpha}_{\GG}(\R^d), \forall\, |J| \leq r  \right\}
\end{equation*}
and the corresponding norm
$$
\|u\|_{C_{\GG}^{r,\alpha}(\R^d)}:=\sum_{0 \leq |J| \leq r} \left(\| X^{J}u\|_{{C^0}(\R^d)}+ [X^{J}u]_{C^{0,\alpha}_{\GG}(\R^d)}\right).$$
The spaces $C^{r, \alpha}_{\GG}(\R^d)$ with the norm $\|\cdot\|_{C_{\GG}^{r,\alpha}(\R^d)}$
are Banach spaces for any $r \in \N$ and any $\alpha \in (0,1]$.\\
 We conclude this preliminary section recalling, also, the definition of horizontal Sobolev spaces. Let $r \in \N$ and $1 \leq p \leq \infty$. We define the space 
\begin{equation*}
W^{r, p}_{\GG}(\R^d) = \left\{u \in L^p (\R^d) : X^J u \in L^p(\R^d), \,\, \forall\, |J| \leq r  \right\}
\end{equation*}
endowed with the norm 
\begin{equation*}
\| u \|_{W^{r,p}_{\GG}(\R^d)} = \left(\sum_{|J| \leq r} \int_{\R^d} |X^J u(x)|^{p}\ dx\right)^{\frac{1}{p}},
\end{equation*}
for $r\in [1,\infty)$ and similarly for $p=\infty$.

We denote by $C^{r, \alpha}_{\GG, \text{loc}}(\R^d)$ and $W^{r, p}_{\GG, \text{loc}}(\R^d)$ the horizontal H\"older space and horizontal Sobolev space, respectively, as in the above manner with $\R^d$ replaced by any compact subset $\Omega$.

In order to have a self-contained presentation of the work, next we recall the main results used below on linear parabolic subelliptic equations and a Sobolev embedding result. 

\begin{theorem}\cite[Theorem 1]{Lamberton_1987}\label{lamberton}
Let $p \in (1,\infty)$ and $\Omega$ an open subset of $\R^d$. If $f \in L^p([0,T] \times \Omega)$, then the problem
\begin{equation*}
\begin{cases}
\partial_t u - \Delta_{\GG} u = f
\\
u(0, x) = 0
\end{cases}
\end{equation*}
admits a solution $u \in C([0,T]; L^p(\Omega))$ such that $\partial_t u$, $\Delta_{\GG} u \in L^p([0,T]\times\Omega)$ and 
\begin{equation*}
\| \partial_t u\|_{L^p([0,T] \times \Omega)} + \| \Delta_{\GG} u\|_{L^p([0,T] \times \Omega)} \leq C_p \|f\|_{L^p([0,T] \times \Omega)}
\end{equation*}
where $C_p$ depends only on $p$ and on the holomorphic constant of the semi-group $e^{t \Delta_{\GG}}$. 
\end{theorem}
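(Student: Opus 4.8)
The statement is the maximal $L^p$-regularity estimate for the subelliptic heat operator $\partial_t-\Delta_{\GG}$, and the plan is to obtain it from the Dore--Venni theorem on sums of operators with bounded imaginary powers, the two summands being the time derivative and $-\Delta_{\GG}$. \textbf{Step 1 (the spatial operator on $L^p$).} On the homogeneous group $\GG\cong\R^d$ the operator $-\Delta_{\GG}=-\sum_{i=1}^m X_i^2$ is nonnegative and self-adjoint on $L^2(\R^d)$, since the left-invariant fields are skew-adjoint for Lebesgue measure; moreover, by Folland's construction of the heat kernel and the attached Gaussian-type bounds (see \cite{Bonfiglioli,Bramanti_2010}), $e^{t\Delta_{\GG}}$ is a symmetric sub-Markovian semigroup, hence a contraction on every $L^p(\R^d)$, $1\le p\le\infty$. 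By Stein's analytic-interpolation theorem in Cowling's quantitative form, such a semigroup extends to a bounded analytic semigroup on $L^p(\R^d)$ for every $p\in(1,\infty)$, and its generator $A_p:=-\Delta_{\GG}$ has bounded imaginary powers with $\|A_p^{is}\|_{L^p\to L^p}\le C_p\,e^{\theta_p|s|}$, where $\theta_p=\pi|1/2-1/p|<\pi/2$. This is the point where the group structure is genuinely used, and I expect it to be the main obstacle; everything downstream is the standard Dore--Venni packaging.

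\textbf{Step 2 (time derivative) and Step 3 (the sum).} On the bounded interval, let $B$ be $\partial_t$ on $L^p((0,T))$ with domain $\{v\in W^{1,p}((0,T)):v(0)=0\}$. Since the interval is finite, $B$ is boundedly invertible (its inverse is the Volterra integration operator) and it has bounded imaginary powers of angle exactly $\pi/2$, the powers $B^{-z}$ being the Riemann--Liouville fractional integrals $\frac1{\Gamma(z)}\int_0^t(t-s)^{z-1}(\cdot)\,ds$; this is the guiding example of the Dore--Venni paper. Now work in $X:=L^p([0,T]\times\R^d)=L^p((0,T);L^p(\R^d))$, a UMD space for $p\in(1,\infty)$. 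The canonical extensions to $X$ of $A_p$ (acting in space) and of $B$ (acting in time) commute in the resolvent sense, have bounded imaginary powers, and the sum of their power angles is $\theta_p+\pi/2<\pi$. Hence Dore--Venni applies, in the version allowing $A_p$ merely sectorial while $B$ is invertible: $B+A_p$ is closed on $D(B)\cap D(A_p)$, invertible, and $\|Bv\|_X+\|A_pv\|_X\le C_p\|(B+A_p)v\|_X$. Extending $f$ by zero to $\R^d$ and setting $u:=(B+A_p)^{-1}\tilde f$ yields the unique solution of $\partial_t u-\Delta_{\GG}u=\tilde f$, $u(0)=0$, together with $\|\partial_t u\|_{L^p([0,T]\times\R^d)}+\|\Delta_{\GG}u\|_{L^p([0,T]\times\R^d)}\le C_p\|f\|_{L^p([0,T]\times\Omega)}$, the constant depending only on $p$ and on the holomorphy constant of $e^{t\Delta_{\GG}}$, which is all that enters Step~1.

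\textbf{Step 4 (regularity and general open set).} From $u,\partial_t u\in L^p((0,T);L^p(\R^d))$ one gets $u\in W^{1,p}((0,T);L^p(\R^d))\hookrightarrow C([0,T];L^p(\R^d))$, so the initial datum is attained in $L^p(\R^d)$, and the $L^p$-membership of $\partial_t u,\Delta_{\GG}u$ with the stated bound follows. For an arbitrary open $\Omega\subset\R^d$ one simply restricts: since $\Delta_{\GG}$ is a local differential operator, $u|_\Omega$ still solves $\partial_t(u|_\Omega)-\Delta_{\GG}(u|_\Omega)=f$ on $(0,T)\times\Omega$, lies in $C([0,T];L^p(\Omega))$, and satisfies the estimate because the $L^p(\Omega)$-norms on the left are dominated by the corresponding $L^p(\R^d)$-norms. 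An alternative route for Steps~1--3, which avoids isolating the angle $\theta_p$ and the non-invertibility of $-\Delta_{\GG}$ on $\R^d$, is to note that the Gaussian bounds give $-\Delta_{\GG}$ a bounded $H^\infty$-calculus on $L^p(\R^d)$ (Duong--McIntosh / Cowling--Duong--McIntosh--Yagi criterion), hence $R$-sectoriality, and then invoke Weis's characterization of maximal $L^p$-regularity on the finite interval $(0,T)$ directly.
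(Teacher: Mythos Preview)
The paper does not prove this theorem: it is quoted verbatim as \cite[Theorem~1]{Lamberton_1987} and used as a black box in the bootstrap argument for \Cref{thm:existence1}. So there is no ``paper's own proof'' to compare against; the relevant benchmark is Lamberton's original argument.

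Your proposal is correct as a route to maximal $L^p$-regularity, but it is not Lamberton's route. Lamberton's 1987 paper predates the Dore--Venni and Weis machinery; his argument is a transference argument in the spirit of Coifman--Weiss. He works abstractly with a positive contraction semigroup $\{T_t\}$ on all $L^p$ that is bounded analytic on $L^p$ for $1<p<\infty$, and shows directly that the maximal-regularity operator $f\mapsto A\int_0^{\cdot}T_{\cdot-s}f(s)\,ds$ is bounded on $L^p((0,T);L^p)$ by transferring to a convolution estimate on $\R$ and invoking the Mikhlin--H\"ormander multiplier theorem. This is why the constant in the statement is said to depend only on $p$ and on ``the holomorphic constant of the semigroup'': that is literally the only datum entering his transference estimate. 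Your Dore--Venni route needs in addition the BIP angle $\theta_p<\pi/2$ (which you correctly extract from Cowling's theorem for symmetric sub-Markovian semigroups), and your alternative via Weis needs $R$-sectoriality from the Gaussian heat-kernel bounds; both are heavier inputs than what Lamberton actually uses, though they of course yield the same conclusion.

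Your handling of a general open $\Omega$ by zero-extension and restriction is fine for the statement as written, which asks only for \emph{a} solution with the estimate and imposes no boundary condition on $\partial\Omega$. Note, however, that in the paper's only use of this theorem (the bootstrap in the proof of \Cref{thm:existence1}) the domain is a ball $B_R$ and the equation comes with a Dirichlet condition; the authors are tacitly relying on the fact that the Dirichlet sub-Laplacian on $B_R$ also generates a positive analytic contraction semigroup on $L^p(B_R)$, so that Lamberton's abstract theorem applies directly on $B_R$ rather than via extension to $\R^d$.
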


\begin{theorem}\cite[Theorem 1.4]{Frentz_2012}\label{immersion}
Let $\Omega \subset \R^d$ and let $U$ be a compact subset of $(0,T] \times \Omega$. Let $Q + 2 < p < 2(Q+2)$. Then, there exists a positive constant $C$, depending only on $U$, $\Omega$, $T$ and $p$ such that for $\alpha = \frac{1}{p}(p - (Q+2))$ and for every $u \in W^{1,p}([0,T]; W^{2,p}_{\GG}(\Omega))$ we have 
\begin{equation*}
\|u\|_{C^{1, \alpha}_{\GG}(U)} \leq C\left(\|\partial_t u\|_{L^{p}((0,T] \times \Omega)}+\|u\|_{W^{2,p}_{\GG}((0,T] \times \Omega)}\right). 
\end{equation*}
\end{theorem}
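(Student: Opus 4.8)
The statement is the parabolic, homogeneous-group analogue of Morrey's inequality, and the plan is to prove it along the classical route: localize, represent the localized function through the fundamental solution of the subelliptic heat operator, and recognize the solution operator as a fractional integral of the right order on the space of homogeneous type of homogeneous dimension $Q+2$, to which a Morrey-type lemma applies. For the localization, since $U$ is compact in $(0,T]\times\Omega$ it lies at positive distance from $\{t=0\}$ and from $\partial\Omega$; pick $\chi\in C_c^\infty((0,T]\times\Omega)$ with $\chi\equiv1$ near $U$ and supported in $(t_0,T]\times\Omega'$ for some $t_0>0$, $\Omega'\Subset\Omega$, and set $v:=\chi u$. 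Then $v$ vanishes at $t=t_0$ and solves $\partial_t v-\Delta_{\GG}v=g$ with
\[
g=\chi\,(\partial_t u-\Delta_{\GG}u)+(\partial_t\chi)\,u-2\,\nabla_{\GG}\chi\cdot\nabla_{\GG}u-(\Delta_{\GG}\chi)\,u,
\]
and, since $W^{1,p}([0,T];W^{2,p}_{\GG}(\Omega))\hookrightarrow L^p((t_0,T);W^{1,p}_{\GG}(\Omega'))$, one has $g\in L^p$ with compact support and $\|g\|_{L^p}\le C(\chi)\big(\|\partial_t u\|_{L^p((0,T)\times\Omega)}+\|u\|_{W^{2,p}_{\GG}((0,T)\times\Omega)}\big)$.

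Next, let $\Gamma$ be the fundamental solution of $\partial_t-\Delta_{\GG}$ on $\GG$, i.e.\ the subelliptic heat kernel: it is smooth off the diagonal, left-invariant, and $\delta_\lambda$-homogeneous, $\Gamma(\lambda^2t,\delta_\lambda x)=\lambda^{-Q}\Gamma(t,x)$. By \Cref{lamberton} and uniqueness of solutions in the class $\{w\in C([0,T];L^p):\ \partial_t w,\Delta_{\GG}w\in L^p\}$ (standard energy argument), the function $v$ is represented by the space-time group convolution with $\Gamma$, so that for $(t,x)\in(t_0,T)\times\R^d$ and every $i$, by left-invariance of $X_i$,
\[
X_i v(t,x)=\int_{t_0}^{t}\!\int_{\R^d}(X_i\Gamma)\big(t-s,\,y^{-1}*x\big)\,g(s,y)\,dy\,ds,
\]
and similarly for $v$ itself with $\Gamma$ in place of $X_i\Gamma$.

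Now I would invoke the Gaussian heat-kernel bounds on Carnot groups (Jerison--S\'anchez-Calle; Varopoulos--Saloff-Coste--Coulhon),
\[
|\Gamma(t,x)|\le \tfrac{C}{t^{Q/2}}e^{-c\|x\|_{\GG}^2/t},\qquad |X_i\Gamma(t,x)|\le \tfrac{C}{t^{(Q+1)/2}}e^{-c\|x\|_{\GG}^2/t},
\]
together with the matching increment (Hölder-type) estimates for $\Gamma$ and $X_i\Gamma$ with respect to the parabolic quasi-distance $d_P((t,x),(s,y)):=|t-s|^{1/2}+d_{\SR}(x,y)$. In the parabolic gauge $\varrho(t,x):=|t|^{1/2}+\|x\|_{\GG}$ these give $|\Gamma|\lesssim\varrho^{-(Q+2)+2}$ and $|X_i\Gamma|\lesssim\varrho^{-(Q+2)+1}$, so that on the space of homogeneous type $((0,T)\times\R^d,d_P,dt\,dx)$ of homogeneous dimension $Q+2$ the maps $g\mapsto\Gamma*g$ and $g\mapsto(X_i\Gamma)*g$ are fractional integrals of order $2$ and $1$ respectively. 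The Morrey-type lemma on spaces of homogeneous type then gives that an order-$\beta$ fractional integral with such size and increment bounds maps $L^p$ continuously into $C^{0,\beta-(Q+2)/p}$ when $(Q+2)/p<\beta\le1$, and into the Lipschitz class when $\beta>1$ (Hölder continuity here being with respect to $d_P$, hence jointly in $t$ and $x$ with the parabolic scaling). Taking $\beta=1$ for $X_i v$—the hypothesis $p>Q+2$ being exactly what forces $\alpha:=1-(Q+2)/p>0$—and $\beta=2$ for $v$ yields $v\in C^{1,\alpha}_{\GG}$ near $U$ with $\|v\|_{C^{1,\alpha}_{\GG}(U)}\le C\|g\|_{L^p}$; the remaining restriction $p<2(Q+2)$ keeps $\alpha<\tfrac12$, placing the conclusion within the parabolic H\"older scale while all the kernel-increment integrals remain convergent. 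Since $v=u$ near $U$, combining with the bound of the first step gives the asserted estimate with $C=C(U,\Omega,T,p)$.

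The heart of the argument, and the only genuinely hard ingredient, is the \emph{sharp pointwise and increment control} of $\Gamma$ and $X_i\Gamma$ in the parabolic Carnot--Carath\'eodory metric; once these deep subelliptic heat-kernel estimates for H\"ormander systems are granted, the rest is the classical Morrey argument transplanted to a homogeneous space. The only other point requiring care is purely bookkeeping: verifying that the commutator terms $(\partial_t\chi)u$, $\nabla_{\GG}\chi\cdot\nabla_{\GG}u$, $(\Delta_{\GG}\chi)u$ created by the cutoff genuinely lie in $L^p_{\mathrm{loc}}$, which uses only the inclusion $W^{2,p}_{\GG}\hookrightarrow W^{1,p}_{\GG}$ in the space variable.
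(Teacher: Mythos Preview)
The paper does not prove this statement: \Cref{immersion} is simply quoted from \cite[Theorem~1.4]{Frentz_2012} and used as a black box in the bootstrap argument for \Cref{thm:existence1}, so there is no proof in the paper against which to compare your proposal.

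That said, your outline is a reasonable route to such an embedding and is in the spirit of how results of this type are established in the literature on H\"ormander-type operators: localize, represent through the subelliptic heat kernel, and apply fractional-integral/Morrey estimates on the parabolic space of homogeneous type of dimension $Q+2$. The genuinely hard inputs you identify---the Gaussian pointwise and increment bounds for $\Gamma$ and $X_i\Gamma$ in the Carnot--Carath\'eodory parabolic metric---are indeed the substantive ingredients, and once granted the rest is bookkeeping. One small point to tighten: you invoke \Cref{lamberton} and ``uniqueness by a standard energy argument'' to justify the representation $v=\Gamma*g$, but the energy argument in $L^p$ for $p\neq 2$ is not entirely standard; it would be cleaner to appeal directly to the uniqueness statement built into the construction of $\Gamma$ in \cite{Bonfiglioli_2003}, or to the $L^p$-maximal regularity theory behind \Cref{lamberton}.
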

(Recall that $Q$ is the homogeneous dimension of the group defined before).

\section{Fokker-Planck equation}

\subsection{Existence of solutions}

We consider the Fokker-Planck equation
\begin{equation}\label{eq:FP}
\begin{cases}
\partial_{t} \rho - \sigma \Delta_{\GG} \rho - \text{div}_{\GG}(b(t, x)\rho)=0, & (t, x) \in (0,T) \times \R^{d}
\\
\rho(0,x)=\rho_{0}(x), & x\in\R^d
\end{cases}
\end{equation}
where $b \in C([0,T]; (C^{1,\delta}_{\GG}(\R^{d}))^m)$, for some $\delta\in(0,1)$.



\begin{proposition}\label{prop:ex_FP}
Under the assumptions of 
\Cref{thm:ex_FP1}
equation~\eqref{eq:FP} has a classical bounded solution~$\rho \in C((0,T); C^{2,\delta}_{\GG, \textrm{loc}}(\R^{d}))$. Moreover,
\begin{equation}\label{L1}
0\leq \rho\leq \|\rho_0\|_{L^\infty(\R^d)}, \quad \text{and} \quad  \int_{\R^d}\rho(t, x)\, dx=1, \,\, \forall\ t\in [0,T].
\end{equation}
\end{proposition}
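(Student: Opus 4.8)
The plan is to construct the solution by a truncation/approximation scheme and pass to the limit using the subelliptic parabolic regularity stated in \Cref{lamberton} and \Cref{immersion}. First I would replace the drift $b$ and the initial datum $\rho_0$ by bounded, smooth, compactly supported approximations: fix a cutoff $\chi_R\in C^\infty_c(\R^d)$ with $\chi_R\equiv 1$ on $B_R$, set $b_R = \chi_R\, b$ (still uniformly bounded in $C([0,T];(C^{1,\delta}_\GG)^m)$ on compact sets, with global bound controlled by $\sup_t\|b(t)\|_{(C^{1,\delta}_\GG)^m}$) and $\rho_0^R$ a mollification of $\rho_0\chi_R$, renormalised, so that $0\le\rho_0^R\le\|\rho_0\|_\infty$ and $\int\rho_0^R=1$. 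On each bounded domain (or on $\R^d$ with the bounded coefficients) the linear parabolic Cauchy problem $\partial_t\rho_R-\sigma\Delta_\GG\rho_R-\diver_\GG(b_R\rho_R)=0$, $\rho_R(0)=\rho_0^R$, has a classical solution: rewriting the drift term as $\diver_\GG(b_R)\rho_R + b_R\cdot\nabla_\GG\rho_R$ puts the equation in the form covered by the $L^p$ estimate of \Cref{lamberton}, and bootstrapping with \Cref{immersion} upgrades $\rho_R$ to $C^{1+\alpha/2,2+\alpha}_{\GG,\mathrm{loc}}$, hence classical.

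Second, I would prove the two a priori bounds in \eqref{L1} at the level of the approximations, uniformly in $R$. Nonnegativity $\rho_R\ge 0$ and the upper bound $\rho_R\le\|\rho_0\|_\infty$ follow from the (subelliptic) parabolic maximum principle: since $\|\rho_0\|_\infty$ is a supersolution of the linear equation written in nondivergence form $\partial_t w-\sigma\Delta_\GG w - b_R\cdot\nabla_\GG w - (\diver_\GG b_R)\, w = -(\diver_\GG b_R)\|\rho_0\|_\infty$... here one must be a little careful because of the zeroth-order term $\diver_\GG b_R$, so I would instead argue via the stochastic representation / Feynman–Kac formula of \cite{Bell_2004,Paoli_2016} — the measure $\rho_R(t,\cdot)\,dx$ is the law of the truncated diffusion started from $\rho_0^R\,dx$, which immediately gives $\rho_R\ge 0$ and mass conservation $\int\rho_R(t,x)\,dx=1$; the $L^\infty$ bound then follows from the duality $\int\rho_R(t)\varphi = \int\rho_0^R\, P_t^R\varphi$ together with the fact that the dual (backward) operator is also a (sub-Markov, here Markov since mass is conserved) semigroup, so $\|\rho_R(t)\|_\infty\le\|\rho_0^R\|_\infty\le\|\rho_0\|_\infty$. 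Alternatively a direct Moser/maximum-principle argument works since $\diver_\GG b_R$ is bounded on the region where $\rho_R$ is being estimated; either way these bounds are uniform in $R$ and in $t\in[0,T]$.

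Third, I would extract a limit. The uniform bound $0\le\rho_R\le\|\rho_0\|_\infty$ gives, via \Cref{lamberton} applied on a fixed compact $U\Subset(0,T)\times\R^d$ (the right-hand side $\diver_\GG(b_R\rho_R)$ is bounded in $L^p(U)$ for every $p$ by the uniform $C^1_\GG$-bound on $b$ and on $\nabla_\GG\rho_R$ obtained in turn from a first round of interior estimates), uniform bounds on $\partial_t\rho_R$ and $\Delta_\GG\rho_R$ in $L^p_{\mathrm{loc}}$; then \Cref{immersion} gives uniform $C^{1,\alpha}_\GG$ bounds, and a Schauder-type iteration (using that $b\in C^{0,\delta}_\GG$) gives uniform $C^{1+\delta/2,2+\delta}_{\GG,\mathrm{loc}}$ bounds on compact subsets of $(0,T)\times\R^d$. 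By Arzelà–Ascoli and a diagonal argument $\rho_R\to\rho$ in $C^{2,\delta}_{\GG,\mathrm{loc}}$ on every such compact set (along a subsequence), so $\rho$ is a classical solution of \eqref{eq:FP} on $(0,T)\times\R^d$ and inherits $0\le\rho\le\|\rho_0\|_\infty$. Mass conservation $\int_{\R^d}\rho(t,x)\,dx=1$ passes to the limit because the uniform $L^\infty$ bound plus a tightness estimate (controlling the mass of $\rho_R$ outside a large ball uniformly in $R$, e.g. from the moment-type bound on the truncated process, or from testing against a slowly growing cutoff and Gronwall) prevents escape of mass; attachment of the initial datum $\rho(0)=\rho_0$ follows from the $C([0,T];L^p_{\mathrm{loc}})$ continuity.

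The main obstacle I expect is the non-compactness of $\R^d$ combined with the possibly super-linear growth of $b$: getting bounds that are genuinely uniform in the truncation parameter $R$ requires care, since naive energy estimates on $\R^d$ fail. The two places where this bites are (a) justifying that the limit solution has full mass $1$ and loses none at infinity — this needs a tightness argument rather than a direct estimate — and (b) ensuring the local Schauder constants do not blow up, which is fine because $b$ is bounded in $C^{1,\delta}_\GG$ on every fixed compact set uniformly in $R$ by construction. The maximum-principle step is essentially standard once one commits to either the probabilistic representation or a careful treatment of the bounded zeroth-order term $\diver_\GG b$.
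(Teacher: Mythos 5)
Your overall scheme (approximate, uniform local subelliptic estimates, diagonal limit) is in the spirit of the paper, but two steps would fail as written, and they are exactly the points the paper's construction is designed to handle. The decisive one is mass conservation. You truncate the \emph{drift} and keep the Cauchy problem on $\R^d$, and you yourself flag that ruling out loss of mass at infinity ``needs a tightness argument''; but the fixes you suggest are unavailable here: moment bounds for the underlying diffusion are precisely what the authors state cannot be obtained (the fields $X_i$ are only homogeneous of degree one and may grow polynomially, so the effective drift $\sum_i b_iX_i$ is super-linear even though $b$ is bounded), and a ``slowly growing cutoff plus Gronwall'' still needs an a priori uniform-in-time $L^1$ bound, which is the very thing to be proved. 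The paper truncates the \emph{domain} instead: it solves the Dirichlet problem \eqref{eq:P_R} on balls $B_R$ (Lions' theorem in $H^1_{0,\GG}(B_R)$, then \cite{RS_77} and the Schauder estimates of \cite{BramantiBrandolini_07}, via smooth approximation of $b$, for local $C^{2,\delta}_\GG$ bounds uniform in $R$). The zero boundary condition yields monotonicity of $\rho_R$ in $R$, and integrating the equation over $B_R$ with the sign condition $\partial\rho_R/\partial\nu\le 0$ gives $\int_{B_R}\rho_R(t)\,dx\le 1$ uniformly; hence the monotone limit $\rho$ lies in $L^1$, and testing the \emph{limit} equation with dilated cutoffs $\xi_R$ (whose horizontal derivatives are $O(1/R)$ by homogeneity) gives $\int_{\R^d}\rho(t)\,dx=1$ by dominated convergence. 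Nothing in your drift-truncated scheme replaces this monotonicity/flux argument.

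The second gap is the bound $0\le\rho\le\|\rho_0\|_{L^\infty}$. Identifying your PDE solution with the law of the truncated diffusion presupposes well-posedness of the SDE and, more importantly, uniqueness for the Fokker--Planck equation, which at this stage is not available (it is proved later, \Cref{prop:uniqueness}, under integrability conditions). Moreover the duality step is circular: the backward semigroup being Markov gives an $L^\infty$ contraction for the \emph{backward} equation, hence an $L^1$ contraction for the densities, not an $L^\infty$ bound on $\rho$; to get $\|\rho(t)\|_\infty\le\|\rho_0\|_\infty$ by duality you would need the backward semigroup to contract $L^1$, which is equivalent to the bound you are trying to prove. Your fallback maximum-principle argument with the bounded zeroth-order term $\diver_{\GG} b$ would only give an exponential-in-time constant. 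The paper instead applies the comparison principle directly to the approximating problems on the bounded domains $B_R$, where the approximants are continuous up to the parabolic boundary, and passes the bound (and nonnegativity, and the monotonicity in $R$) to the limit.
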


\proof
Consider the initial-boundary value problem
\begin{equation}\label{eq:P_R}
\left\{\begin{array}{ll}
\partial_{t} \rho_R - \sigma \Delta_{\GG} \rho_R - \text{div}_{\GG}(b(t, x)\rho_R)=0, &\qquad (t, x) \in (0,T) \times B_R\\
\rho_R(0,x)=\rho_{0}(x),&\qquad x\in B_R\\
\rho_R(t,x)=0,&\qquad (t,x)\in (0,T)\times \partial B_R  
\end{array}\right.
\end{equation}
where $B_R$ is the ball of radius $R$ w.r.t. the Carnot-Carath\`eodory distance. We shall first solve problem~\eqref{eq:P_R} establishing several properties of~$\rho_R$ and after, letting $R\to\infty$, we obtain a solution to problem~\eqref{eq:FP} with the desired properties. 

Invoking Lions' Theorem (see \cite[theorem X.9]{Brezis}), we infer that there is a unique function $\rho_R\in L^2(0,T;H^{1}_{0,\GG}(B_R))\cap C([0,T];L^2(B_R))$ with $\partial_t\rho_R\in L^2(0,T;H^{-1}_{\GG}(B_R))$ such that
\begin{equation}\label{ll}
\begin{cases}
\int_{B_R}\partial_{t} \rho_R(t)\,v\, dx + \int_{B_R} \left(\sigma\nabla_{\GG} \rho_R(t)+b(t)\rho_R(t)\right)\nabla_{\GG}v =0, & v\in H^{1}_{0,\GG}(B_R), \textrm{a.e.}\, t\in [0,T]
\\
\rho_R(0,x)=\rho_{0}(x),&\ x\in B_R
\end{cases}
\end{equation}
where $H^{1}_{0,\GG}(B_R)$ is the closure of $C^1_{\GG,0}(B_R)$ in $W^{1,2}(B_R)$ and $H^{-1}_{\GG}(B_R)$ is its dual.
Observe that $\rho_R$ is a distributional solution to 
\begin{equation*}
\partial_{t} \rho_R - \sigma \Delta_{\GG} \rho_R= \left(\text{div}_{\GG} b\right)\rho_R+b\nabla_{\GG}\rho_R
\end{equation*}
where the right hand side belongs to $L^2((0,T)\times B_R)$. 
The results in \cite[Theorem 18]{RS_77} ensures that: $\partial_t\rho_R$, $X_i\rho_R$ and $X_iX_j\rho_R$ belong to $L^2((0,T)\times B_R)$. In particular, we deduce that the differential equation in~\eqref{eq:P_R} is satisfied for a.e. $(t,x)\in (0,T)\times B_R$.\\
We claim that $\rho_R\in C([0,T]\times B_R)$ and that, for every domain $\Omega\subset (0,T)\times B_R$, there exist a constant $K(\Omega,R)$ (depending on the assumptions, on $\Omega$ and on $R$) and a constant $K'(\Omega)$ (depending on the assumptions and on $\Omega$), such that
\begin{equation}\label{eq:claim1}
\|\rho_R\|_{C^{2,\delta}_{\GG}(\Omega)}\leq K(\Omega,R)\quad\textrm{and}\quad
\|\rho_R\|_{C^{2,\delta}_{\GG}(\Omega)}\leq K'(\Omega) \quad\textrm{for $R$ sufficiently large}
\end{equation}
(recall that $\delta$ is the H\"older exponent of $\nabla_{\GG}b$).
Indeed, consider a sequence $\{b_n\}_n$ of drifts such that $b_n\in C^\infty$ and $b_n$ uniformly converges to $b$ in $[0,T]\times B_R$ as $n\to\infty$. Therefore, by the same arguments as before, problem~\eqref{eq:P_R} with $b$ replaced by $b_n$ has a solution $\rho_{R,n}$. Applying iteratively \cite[Theorem 18]{RS_77}, we infer that $\rho_{R,n}\in C^\infty$ and, by standard comparison principle, we get $\|\rho_{R,n}\|_{L^\infty([0,T]\times \R^d)}\leq \|\rho_0\|_{L^\infty(\R^d)}$. Moreover, the results in \cite[Theorem 1.1]{BramantiBrandolini_07} (with $k=0$) ensure that $\rho_{R,n}$ fulfills \eqref{eq:claim1} with  constants $K$ and $K'$, both independent of $n$. Letting $n\to \infty$, we accomplish the proof of our claim~\eqref{eq:claim1}.

Since $\rho_R$ is continuous and $\text{div}_{\GG} b$ is bounded, standard comparison principle entails
\begin{equation*}
0\leq \rho_R(t,x)\leq \|\rho_0\|_{L^\infty(\R^d)}\qquad\forall (t,x)\in[0,T]\times B_R
\end{equation*}
and that, for each $(t,x)$, the value $\rho_R(t,x)$ is nondecreasing with respect to~$R$, i.e., 
\begin{equation*}
\rho_{R_1}(t,x)\geq \rho_{R_2}(t,x) \qquad \forall R_1\geq R_2, (t,x)\in[0,T]\times B_{R_2}.
\end{equation*}
From the properties proved so far we obtain that for each $(t,x)$ there exists the limit $\displaystyle{\lim_{R\to\infty}}\rho_R(t,x)$ which we denote by $\rho(t,x)$ and, clearly, $0\leq \rho\leq \|\rho_0\|_{L^\infty(\R^d)}$. By a standard diagonalization process, using~\eqref{eq:claim1}, $\rho\in C^{2,\delta}_{\GG, \text{loc}}(\Omega)$ for every domain $\Omega\subset (0,T)\times \R^d$.

We now proceed with the proof of the second part of \eqref{L1}. First, we consider again the approximating problem \eqref{eq:P_R}.
We integrate the equation on $[0,T]\times B_R$, we use the divergence theorem and we note that 
$\partial\rho_R/\partial \nu\leq 0$ where $\nu$ is the outward pointing normal to $\partial B_R$. Hence 
we get that $\rho_R(t) \in L^1(B_R)$ independently of $R \geq 0$ for any $t \in (0, T]$. Moreover, letting $R \uparrow \infty$ we deduce $\rho(t) \in L^1(\R^d)$ for any $t \in (0,T]$. Let us now consider a function $\xi \in C^{\infty}_{c}(\R^d)$ such that $\xi(x) = 1$ for any $x \in B_1$ and $\xi(x) = 0$ for any $x \in \R^d \backslash B_2$, and define $\xi_R(x) = \xi(\frac{x}{R})$ for each $R > 0$. Hence, multiplying \eqref{eq:FP} by $\xi_R$ and integrating by parts we have
\begin{equation*}
\int_{\R^d} \rho(t, x)\xi_R(x)\ dx + \iint_{(0, t) \times \R^d} \big(-\sigma\Delta_{\GG} \xi_R(x) + b \nabla\xi_{R}(x) \big)\rho(t, x)\ dtdx = \int_{\R^d} \rho_0(x)\xi_R (x)\ dx. 
\end{equation*}
Observing that 
\begin{equation*}
\Delta_{\GG} \xi_R(x) = \frac{1}{R^2}\Delta_{\GG} \xi(\frac{x}{R}) \quad \text{and} \quad \nabla_{\GG}\xi_R(x) = \frac{1}{R} \nabla_{\GG}\xi(\frac{x}{R}),
\end{equation*}
by dominated convergence theorem as $R \uparrow \infty$, we conclude
\begin{equation*}
\int_{\R^d} \rho(t, x)\ dx = \int_{\R^d} \rho_0(x)\ dx. \eqno\square
\end{equation*}

\begin{proposition}\label{prop:ex_FP1}
Under the assumptions of 
\Cref{thm:ex_FP1},
 let $\rho$ be the solution of \eqref{eq:FP} constructed in \Cref{prop:ex_FP}. 
Then:
\begin{itemize}
\item[($i$)] there exists a constant $K$ depending on $b$, $\sigma$ and $\|\rho_0\|_{L^\infty(\R^d)}$ such that
\begin{align}\label{eq:step7}
\begin{split}
\int_{\R^d}|\rho(t,x)|^2\, dx\leq\ & K\|\rho_0\|_{L^2(\R^d)}^2\qquad\forall t\in[0,T],
\\
\int_{[0,T]\times \R^d}|\nabla_{\GG}\rho(t,x)|^2\, dxdt \leq\ & K\|\rho_0\|_{L^2(\R^d)}^2.
\end{split}
\end{align}
\item[($ii$)] For every $\varphi\in C([0,T];W^{1,2}_{\GG}(\R^d))$ with $\partial_t\varphi\in L^2(0,T;(W^{1,2}_{\GG}(\R^d))')$, we have
\begin{multline}\label{FPweak}
\int_{\R^d}\rho(x,t)\varphi(x,t)\, dx- \int_{\R^d}\rho_0(x)\varphi(x,t)\, dx +\iint_{(0,t)\times \R^d}\nabla_{\GG}\varphi(x,t)\left(\sigma \nabla_{\GG}\rho(x,t)+b\rho(x,t)\right)\, dxdt\\-
\iint_{(0,t)\times \R^d}\partial_t\varphi(x,t)\rho(x,t) dxdt=0.
\end{multline}
\end{itemize}
\end{proposition}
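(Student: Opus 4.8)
The plan is to establish both estimates in~(i) and the identity~\eqref{FPweak} at the level of the approximating solutions $\rho_R$ of~\eqref{eq:P_R} constructed in~\Cref{prop:ex_FP}, with constants uniform in $R$, and then let $R\to\infty$ using that $\rho_R\uparrow\rho$ pointwise with $0\le\rho_R\le\|\rho_0\|_{L^\infty(\R^d)}$ together with the local convergence of derivatives provided by~\eqref{eq:claim1}.

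\emph{Part (i).} Since $\rho_R\in L^2(0,T;H^{1}_{0,\GG}(B_R))$ with $\partial_t\rho_R\in L^2(0,T;H^{-1}_{\GG}(B_R))$, the choice $v=\rho_R(t)$ is admissible in~\eqref{ll}. Using that $\int_{B_R}\partial_t\rho_R(t)\,\rho_R(t)\,dx=\frac12\frac{d}{dt}\|\rho_R(t)\|_{L^2(B_R)}^2$ and Young's inequality in the drift term, $|b\rho_R\cdot\nabla_{\GG}\rho_R|\le\frac{\sigma}{2}|\nabla_{\GG}\rho_R|^2+\frac{\|b\|_\infty^2}{2\sigma}|\rho_R|^2$, I would integrate over $(0,t)$ to obtain
\[
\|\rho_R(t)\|_{L^2(B_R)}^2+\sigma\iint_{(0,t)\times B_R}|\nabla_{\GG}\rho_R|^2\,dxds\ \le\ \|\rho_0\|_{L^2(\R^d)}^2+\frac{\|b\|_\infty^2}{\sigma}\int_0^t\|\rho_R(s)\|_{L^2(B_R)}^2\,ds
\]
(note $\rho_0\in L^2(\R^d)$ as $\|\rho_0\|_{L^2(\R^d)}^2\le\|\rho_0\|_{L^\infty(\R^d)}$). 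Grönwall's lemma then gives $\|\rho_R(t)\|_{L^2(B_R)}^2\le e^{\|b\|_\infty^2 T/\sigma}\|\rho_0\|_{L^2(\R^d)}^2$ for every $t\in[0,T]$, uniformly in $R$, and reinserting this bound controls the Dirichlet integral uniformly in $R$ as well. To pass to the limit, monotone convergence at each fixed $t$ yields the first estimate for $\rho$; for the second, exhausting $(0,T)\times\R^d$ by compact sets and using the local uniform convergence $\nabla_{\GG}\rho_R\to\nabla_{\GG}\rho$ from~\eqref{eq:claim1} (or the uniform $L^2$ bound and weak convergence) yields $\iint_{(0,T)\times\R^d}|\nabla_{\GG}\rho|^2<\infty$, with $K=e^{\|b\|_\infty^2 T/\sigma}$.

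\emph{Part (ii).} Testing~\eqref{ll} at a.e.\ time with $\varphi(s,\cdot)$, integrating over $s\in(0,t)$, and applying the standard integration by parts in time (legitimate because $\rho_R$ and $\varphi$ both lie in $L^2(0,T;W^{1,2}_{\GG})$ with time derivatives in the dual space; see, e.g., \cite{Brezis}) yields~\eqref{FPweak} with $\R^d$ replaced by $B_R$, for every admissible $\varphi$ with $\varphi(s,\cdot)\in H^{1}_{0,\GG}(B_R)$. If $\varphi$ has compact spatial support in some $B_{R_0}$, this holds for all $R>R_0$; letting $R\to\infty$, the terms containing $\rho_R$ and $b\rho_R$ converge by dominated convergence ($0\le\rho_R\le\|\rho_0\|_{L^\infty(\R^d)}$ and compact support), the term $\iint\partial_t\varphi\,\rho_R$ converges since $\rho_R\rightharpoonup\rho$ in $L^2(0,T;W^{1,2}_{\GG}(B_{R_0}))$ by part~(i), and $\sigma\iint\nabla_{\GG}\varphi\cdot\nabla_{\GG}\rho_R$ converges by the weak $L^2$ convergence of $\nabla_{\GG}\rho_R$, again from part~(i); this gives~\eqref{FPweak} for compactly supported $\varphi$. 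Finally, for a general admissible $\varphi$ I would apply the identity to $\varphi\xi_{R'}$, with $\xi_{R'}$ the cutoff of~\Cref{prop:ex_FP}, and let $R'\to\infty$: the products $\rho(t)\varphi(t)$, $\rho_0\varphi(\cdot,0)$, $\partial_t\varphi\,\rho$ and $\nabla_{\GG}\varphi\cdot(\sigma\nabla_{\GG}\rho+b\rho)$ are integrable over their respective domains (here $\varphi\in L^2((0,T)\times\R^d)$ since $\varphi\in C([0,T];L^2(\R^d))$, and $\sigma\nabla_{\GG}\rho+b\rho\in L^2((0,T)\times\R^d)$ by part~(i)), so the truncated integrals converge by dominated convergence, while the two extra terms carrying $\nabla_{\GG}\xi_{R'}$ are $O(1/R')$ because $\nabla_{\GG}\xi_{R'}$ is supported in $B_{2R'}\setminus B_{R'}$ with $|\nabla_{\GG}\xi_{R'}|\le C/R'$.

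\emph{Main obstacle.} The delicate point is this last step: the admissible test functions need not be compactly supported nor bounded, so the cutoff approximation has to be justified term by term, and this is exactly where the global-in-space space-time $L^2$ integrability of the flux $\sigma\nabla_{\GG}\rho+b\rho$ furnished by part~(i) is indispensable. A secondary technicality is checking that the time integration by parts is legitimate at the level of $\rho_R$, which relies on the $L^2(0,T;H^{1}_{0,\GG})$/$L^2(0,T;H^{-1}_{\GG})$ regularity coming from Lions' theorem and \cite[Theorem 18]{RS_77}.
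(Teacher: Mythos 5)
Your proposal follows essentially the same route as the paper: the energy estimate is obtained exactly as in the paper's proof, by testing \eqref{ll} with $\rho_R$ itself, absorbing the drift term via Young's inequality, applying Gr\"onwall, and letting $R\to\infty$ with the uniform-in-$R$ constants. For part (ii) the paper simply says the weak identity \eqref{FPweak} follows ``from the same reasoning''; your limit passage (first compactly supported test functions, then a cutoff argument justified by the global $L^2$ bound on $\sigma\nabla_{\GG}\rho+b\rho$ from part (i)) is precisely a spelled-out version of that step, so the argument is correct and matches the paper's.
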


\proof 
For $R>0$, consider the solution~$\rho_R$ to \eqref{eq:P_R} found before.
For simplicity of notation, we shall denote by $K$ a constant which may change from line to line but which always depends only on the assumptions (in particular it is independent of $R$).
Assume for the moment that, for every $t\in[0,T]$ and $R>0$, there holds
\begin{equation}\label{eq:claim6}
\frac{d}{dt}\left(\int_{B_R}|\rho_R(t,x)|^2\, dx\right)+\sigma\int_{B_R}|\nabla_{\GG}\rho_R(t,x)|^2\, dx \leq \frac{\|b\|_{L^{\infty}( [0, T] \times \R^{d})}^2}{\sigma}\int_{B_R}|\rho_R(t,x)|^2\, dx.
\end{equation}
From \eqref{eq:claim6}, we deduce
\begin{equation*}
\frac{d}{dt}\left(\int_{B_R}|\rho_R(t,x)|^2\, dx\right) \leq \frac{\|b\|_{L^{\infty}}^2}{2\sigma}\int_{B_R}|\rho_R(t,x)|^2\, dx.
\end{equation*}
Hence, by Gromwall's lemma, we infer
\begin{equation}\label{eq:stimaGrom}
\int_{B_R}|\rho_R(t,x)|^2\, dx\leq e^{\frac{\|b\|_{L^{\infty}}^2}{2\sigma}}\int_{B_R} \rho_0^2(x)\, dx\leq e^{\frac{\|b\|_{L^{\infty}}^2}{2\sigma}}\|\rho_0\|_2^2 \qquad \forall t\in[0,T], \, R>0.
\end{equation}
So, as $R \uparrow \infty$  we obtain the former estimate in~\eqref{eq:step7}. On the other hand, integrating~\eqref{eq:claim6}, we have
\begin{multline*}
\int_{B_R}|\rho_R(T,x)|^2\, dx - \int_{B_R}|\rho_0(x)|^2\, dx +\frac{\sigma}{2}\int_{[0,T]\times B_R}|\nabla_{\GG}\rho_R(t,x)|^2\, dxdt\\ \leq \frac{\|b\|_{L^{\infty}}^2}{2\sigma}\int_{[0,T]\times B_R}|\rho_R(t,x)|^2\, dxdt\leq e^{\frac{\|b\|_{L^{\infty}}^2}{2\sigma}}T\|\rho_0\|_{L^2(\R^d)}^2
\end{multline*}
where the last inequality is due to relation~\eqref{eq:stimaGrom}.
Again by~\eqref{eq:stimaGrom}, we deduce that there exists a constant $K$ (independent of $R$ and $\rho_0$) such that
\begin{equation*}
\int_{[0,T]\times B_R}|\nabla_{\GG}\rho_R(t,x)|^2\, dxdt \leq K\|\rho_0\|_{L^2(\R^d)}^2.
\end{equation*}
As $R \uparrow \infty$, we obtain the latter estimate in~\eqref{eq:step7}. It remains to prove estimate~\eqref{eq:claim6}. To this end,
using $\rho_R$ as test function for~\eqref{eq:P_R}, we get
\begin{equation*}
\frac{d}{dt}\left(\int_{B_R}|\rho_R(t,x)|^2\, dx\right) + 2\sigma \int_{B_R}|\nabla_{\GG}\rho_R(x,t)|^2\, dx +2\int_{B_R} b(t,x)\rho_R(x,t)\nabla_{\GG}\rho_R(x,t)\, dx=0.
\end{equation*}
Using H\"older inequality on the last term, we get~\eqref{eq:claim6}. Finally, from the same reasoning we also get \eqref{FPweak}. \qed

\subsection{Uniqueness and regularity}

We recover the uniqueness of solutions to \eqref{prop:ex_FP} by showing the uniqueness of the classical solution to the general linear equations with bounded coefficients of the form
\begin{equation}\label{eq:GFP}
\begin{cases}
\partial_t \rho - \sigma \Delta_{\GG} \rho + B \cdot \nabla_{\GG} \rho + Q \rho = 0 & (t, x) \in (0,T) \times \R^d
\\
\rho(0,x) =\rho_0 & x \in \R^d.  
\end{cases}
\end{equation}
Let us remark that in \cite{Cinti_2009} a similar result is obtained with a different approach under stronger assumptions on the coefficients and for a different linear subelliptic equation.

\begin{proposition}\label{prop:uniqueness}
Let $\rho_0 \in L^\infty(\R^d)$. 
Let $B$ and $Q$ be bounded continuous functions on $[0,T] \times \R^d$ and, moreover, assume that $B$ has a continuous and bounded horizontal gradient. For $j=1, 2$, let $\rho_j \in C((0,T); C^{2,\nu}_{\GG, \text{loc}}(\R^{d}))$ be two classical solutions to~\eqref{eq:GFP} such that for some positive constant $\beta$ we have 
\begin{equation*}
\int_{0}^{T} \int_{\R^d} |\rho_j (t, x)|e^{-\beta (\|x\|^{2}_{\GG} +1)}\ dxdt < \infty. 
\end{equation*}
Then, $\rho_1 = \rho_2$. 
\end{proposition}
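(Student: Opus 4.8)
The plan is a Täcklind-type uniqueness argument: by linearity it suffices to show that $w:=\rho_1-\rho_2$, which is a classical solution of the homogeneous equation $\partial_t w-\sigma\Delta_\GG w+B\cdot\nabla_\GG w+Qw=0$ on $(0,T)\times\R^d$ with $w(\cdot,t)\to 0$ as $t\to0^+$ (in the sense in which the initial datum in \eqref{eq:GFP} is attained) and with $\iint_{(0,T)\times\R^d}|w|\,e^{-\beta(\|x\|_\GG^2+1)}\,dx\,dt<\infty$, must vanish identically. To control $|w|$ classically I would work with $w_\varepsilon:=\sqrt{w^2+\varepsilon^2}$, which has the same local regularity as $w$ and satisfies, using $|\nabla_\GG w_\varepsilon|\le|\nabla_\GG w|$ and $Q\in L^\infty$,
\[
\partial_t w_\varepsilon-\sigma\Delta_\GG w_\varepsilon+B\cdot\nabla_\GG w_\varepsilon+Q w_\varepsilon\ \le\ \|Q\|_{L^\infty}\,\varepsilon\qquad\text{on }(0,T)\times\R^d.
\]

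The core of the proof is the construction of a Gaussian weight adapted to the subelliptic structure. Fix parameters $T_*>0$, $A>0$ to be chosen, put $\tau_0:=T_*/2$, and let $\Psi\in C^\infty(\R^d)$ be a $\delta_\lambda$-homogeneous function of degree $2$ comparable to $\|x\|_\GG^2$, say $c_0\|x\|_\GG^2\le\Psi\le C_0\|x\|_\GG^2$ (such a smoothing of $\|x\|_\GG^2$ exists). Set $g(t,x):=\exp\!\big(-\Psi(x)/(A(T_*-t))\big)$. Since the $X_i$ are homogeneous of degree one, $\nabla_\GG\Psi$ is homogeneous of degree one and $\Delta_\GG\Psi$ of degree zero, so by (the argument of) \Cref{lem:homog} one has $|\nabla_\GG\Psi(x)|\le C\|x\|_\GG$ and $|\Delta_\GG\Psi(x)|\le C$. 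A direct computation then gives, on $[0,\tau_0]\times\R^d$,
\[
\partial_t g+\sigma\Delta_\GG g+(\diver_\GG B)g+B\cdot\nabla_\GG g-Qg\ \le\ g\Big(-\tfrac{c_0\|x\|_\GG^2}{A(T_*-t)^2}+\tfrac{C\sigma\|x\|_\GG^2}{A^2(T_*-t)^2}+\tfrac{C\|x\|_\GG}{A(T_*-t)}+C\Big),
\]
with $C$ depending only on $\sigma$, $\|B\|_{L^\infty}$, $\|\nabla_\GG B\|_{L^\infty}$, $\|Q\|_{L^\infty}$ and the group. Choosing first $A$ large so that the coefficient of $\|x\|_\GG^2$ is $\le-\tfrac{c_0}{2A(T_*-t)^2}$ and then using $-ar^2+br\le b^2/(4a)$, the right-hand side is $\le C_*\,g$ with $C_*$ depending only on the data (and on $A,T_*$). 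Finally I would fix $T_*$ (hence $\tau_0$) small enough that $c_0/(AT_*)>4\beta$; this makes $g(t,x)\le e^{\beta}e^{-\beta(\|x\|_\GG^2+1)}$ on $[0,\tau_0]\times\R^d$, so that $\iint_{(0,\tau_0)\times\R^d}w_\varepsilon\,g<\infty$ (by the growth hypothesis on $w$, and since $g$ is Gaussian hence integrable), and it makes the decay of $g$ beat the at most $O(e^{4\beta R^2})$ growth of $\int_0^{\tau_0}\!\int_{R\le\|x\|_\GG\le 2R}|w|$.

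Then I would multiply the displayed inequality for $w_\varepsilon$ by $g\,\xi_R\ge0$, where $\xi_R(x)=\xi(\delta_{1/R}x)$ is the cut-off used in \Cref{prop:ex_FP} (so $|\nabla_\GG\xi_R|\le C/R$, $|\Delta_\GG\xi_R|\le C/R^2$, both supported in $\{R\le\|x\|_\GG\le 2R\}$), integrate over $\R^d$ and integrate by parts, moving all horizontal derivatives onto $g\xi_R$. The interior terms produce exactly $\int_{\R^d}w_\varepsilon\,\xi_R\big[\partial_t g+\sigma\Delta_\GG g+(\diver_\GG B)g+B\cdot\nabla_\GG g-Qg\big]\le C_*\int_{\R^d}w_\varepsilon\,g\,\xi_R$, plus a remainder $E_R(t)$ collecting the terms carrying $\nabla_\GG\xi_R$ or $\Delta_\GG\xi_R$, plus a term $\le\|Q\|_{L^\infty}\varepsilon\int g\xi_R\le C\varepsilon$. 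On the support of $\nabla_\GG\xi_R$ one has $g\le e^{-c_0R^2/(AT_*)}$ and the powers of $R$ cancel, so using $c_0/(AT_*)>4\beta$ and the growth hypothesis one checks $\int_0^{\tau_0}|E_R(t)|\,dt\to 0$ as $R\to\infty$. Thus $\frac{d}{dt}\int_{\R^d}w_\varepsilon\,g\,\xi_R\le C_*\int_{\R^d}w_\varepsilon\,g\,\xi_R+E_R(t)+C\varepsilon$; Gronwall on $[\eta,t]$ and $\eta\to0^+$ (using $w_\varepsilon(\cdot,\eta)\to\varepsilon$ locally uniformly and $\int g(0,\cdot)<\infty$), then $R\to\infty$ by monotone convergence, then $\varepsilon\to0$, yield $\int_{\R^d}|w(t,x)|\,g(t,x)\,dx=0$ for every $t\in[0,\tau_0]$, whence $w\equiv 0$ on $[0,\tau_0]\times\R^d$. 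Since $\tau_0$ depends only on the fixed data, iterating on $[\tau_0,2\tau_0],[2\tau_0,3\tau_0],\dots$ (each step with zero data at the left endpoint) covers $[0,T]$ in finitely many steps, so $w\equiv0$, i.e.\ $\rho_1=\rho_2$.

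The routine parts are the reduction by linearity, the cut-off bookkeeping and the Gronwall/iteration mechanics; the genuine obstacle is the weight construction, where one must simultaneously absorb the ``bad'' second-order term $\sigma|\nabla_\GG\Psi|^2g/(A(T_*-t))^2$ by the favourable term $-\Psi g/(A(T_*-t)^2)$ — this forces $A$ large and relies on $|\nabla_\GG\Psi|$ being homogeneous of degree one, i.e.\ of the \emph{same} homogeneity as $\sqrt{\Psi}$, which is exactly where \Cref{lem:homog} enters — while keeping $g$ decaying fast enough ($c_0/(AT_*)>4\beta$) both for $|w|g$ to be integrable against the prescribed Täcklind-type bound and to kill the cut-off error $E_R$; balancing these forces the short window $\tau_0$ and hence the iteration. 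An essentially equivalent route is a duality argument: pair $w$ against the solution $v$ of the backward adjoint problem $-\partial_t v-\sigma\Delta_\GG v-\diver_\GG(Bv)+Qv=0$, $v(\tau,\cdot)=\phi\in C^\infty_c(\R^d)$, a Fokker--Planck-type equation of the class of \Cref{prop:ex_FP}, for which one needs the Gaussian upper bounds $v+|\nabla_\GG v|\le C(\tau-t)^{-N}e^{-c\,d_{\SR}(x,\supp\phi)^2/(\tau-t)}$ for the subelliptic operator with bounded lower-order coefficients; there the obstacle merely shifts to quoting these kernel estimates.
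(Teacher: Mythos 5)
Your proof is correct and follows essentially the same route as the paper: the $\sqrt{w^{2}+\varepsilon^{2}}$ regularization, a Gaussian-type weight in the homogeneous norm turned into an (approximate) supersolution of the formal adjoint on a short time window via the homogeneity bounds of \Cref{lem:homog}, a cutoff whose error terms vanish as $R\to\infty$ thanks to the assumed weighted integrability, and a covering of $[0,T]$ by short windows (the paper instead contradicts the infimum $\tau_{0}$, takes $Q\ge 0$ without loss of generality so the weight is an exact supersolution, and uses $e^{-(\beta_{1}+\bar\beta(t-\tau_{0}))(\|x\|_{\GG}^{2}+1)}$ rather than your $e^{-\Psi/(A(T_{*}-t))}$ with a Gronwall factor). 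One small caveat: on a nonabelian group there is no function that is exactly $\delta_{\lambda}$-homogeneous of degree $2$, smooth through the origin, and comparable to $\|x\|_{\GG}^{2}$, but this is harmless---take $\Psi$ smooth only away from the origin (e.g.\ $\|x\|_{\GG}^{2}$ itself, as the paper implicitly does) or a smooth non-homogeneous equivalent enjoying the same bounds $|\nabla_{\GG}\Psi|^{2}\le C\Psi$, $|\Delta_{\GG}\Psi|\le C$.
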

The proof of the proposition is postponed after the following technical lemma.
\begin{lemma}\label{lem:lemma}
Let $\rho_j$, for $j=1, 2$, be two solutions to \eqref{eq:GFP} such that for some $\beta > 0$ we have
\begin{equation}\label{eq:assumption}
\int_{0}^{T} \int_{\R^d} |\rho_j (t, x)|e^{-\beta (\|x\|^{2}_{\GG} +1)}\ dxdt < \infty
\end{equation}
and let 
\begin{equation*}
\tau_0 = \inf\{t \in [0,T] : \rho_1(\cdot, t) \not = \rho_2(\cdot, t)\} \in [0, T). 
\end{equation*}
For $\beta_1 > \beta$ 
and $\bar\beta > 0$, define the function 
\begin{equation*}
\Phi(t, x) = e^{- (\beta_1 + \bar\beta(t-\tau_0))(\|x\|_{\GG}^{2} +1)}. 
\end{equation*}
Then, 
\begin{equation}\label{eq:0}
\partial_t \Phi + \sigma \Delta_{\GG} \Phi + B \cdot \nabla_{\GG} \Phi + (\text{div}_{\GG}B) \Phi \leq 0, \quad (t, x) \in (\tau_0, \tau) \times \R^d 
\end{equation}
for $\tau \in (\tau_0, T]$, with $\tau-\tau_0$ sufficiently small and $\bar\beta$ sufficiently large, and
\begin{equation}\label{eq:1}
\int_{\tau_0}^{\tau} \int_{\R^d} |\rho_j (t, x)| \Phi(t, x)\ dtdx < \infty \quad \text{and} \quad \int_{\tau_0}^{\tau} \int_{\R^d} |\rho_j (t, x) \nabla_{\GG}\Phi(t, x)|\ dtdx < \infty.
\end{equation}
\end{lemma}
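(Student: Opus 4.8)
The plan is to verify the two assertions \eqref{eq:0} and \eqref{eq:1} by direct computation, exploiting the homogeneity of $\|\cdot\|_{\GG}$ and the boundedness of $B$, $\nabla_{\GG}B$. First I would set $g(x) = \|x\|_{\GG}^{2}+1$ and compute the horizontal derivatives of $\Phi = e^{-(\beta_1 + \bar\beta(t-\tau_0))g}$. Writing $\mu(t) = \beta_1 + \bar\beta(t-\tau_0)$, one gets $\partial_t \Phi = -\bar\beta\, g\, \Phi$, $\nabla_{\GG}\Phi = -\mu\, (\nabla_{\GG}g)\,\Phi$, and $\Delta_{\GG}\Phi = \big(\mu^2 |\nabla_{\GG}g|^2 - \mu\, \Delta_{\GG}g\big)\Phi$. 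The key structural fact is that $\|x\|_{\GG}^2$ behaves like a homogeneous function of degree $2$, so $X_i(\|x\|_{\GG}^2)$ is homogeneous of degree $1$ and hence, by \Cref{lem:homog}, bounded by $C\|x\|_{\GG}$; similarly $X_iX_j(\|x\|_{\GG}^2)$ is homogeneous of degree $0$, hence bounded. Consequently $|\nabla_{\GG}g|^2 \leq C\|x\|_{\GG}^2 \leq C g$ and $|\Delta_{\GG}g|\leq C$, while $|B\cdot\nabla_{\GG}g| \leq \|B\|_\infty \cdot C\|x\|_{\GG} \leq C(1 + g)$ by Young's inequality (absorbing the linear growth into a small multiple of $g$ plus a constant).

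Plugging these into the left-hand side of \eqref{eq:0} yields, after factoring out $\Phi>0$, an expression of the form
\begin{equation*}
\Big[-\bar\beta\, g + \sigma\big(\mu^2 |\nabla_{\GG}g|^2 - \mu\,\Delta_{\GG}g\big) - \mu\, B\cdot\nabla_{\GG}g + (\diver_{\GG}B)\Big].
\end{equation*}
Using the bounds above, the dominant (and only potentially dangerous) term is $\sigma\mu^2|\nabla_{\GG}g|^2 \leq C\sigma\mu^2 g$, which must be beaten by $-\bar\beta g$. Since $\mu(t) = \beta_1 + \bar\beta(t-\tau_0)$, on a short interval $(\tau_0,\tau)$ with $\tau-\tau_0 \leq \bar\beta^{-1}$ we have $\mu \leq \beta_1 + 1$, so $\sigma\mu^2 \leq \sigma(\beta_1+1)^2$ is a fixed constant; choosing $\bar\beta$ larger than $C\sigma(\beta_1+1)^2$ (plus the constants controlling the lower-order terms, which are linear in $g$ and thus also absorbable) makes the whole bracket $\leq 0$ once $\|x\|_{\GG}$ — equivalently $g$ — is large, and for $\|x\|_{\GG}$ bounded the bracket is bounded and can be dominated by taking $\bar\beta$ still larger relative to $\inf g = 1$. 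This is the step requiring the most care: one must fix the order of quantifiers correctly — first $\beta_1 > \beta$, then $\bar\beta$ large depending on $\beta_1, \sigma, \|B\|_\infty, \|\nabla_{\GG}B\|_\infty$ and the homogeneity constants, then $\tau - \tau_0 \leq 1/\bar\beta$ — so that $\mu$ stays controlled and the quadratic-in-$\mu$ term never escapes.

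For \eqref{eq:1}, the point is simply that $\Phi$ decays faster than the weight in the hypothesis \eqref{eq:assumption}. Since $\mu(t) \geq \beta_1 > \beta$ on $[\tau_0,\tau]$, we have $\Phi(t,x) \leq e^{-\beta_1 g(x)} = e^{-\beta_1}e^{-\beta_1\|x\|_{\GG}^2} \leq C\, e^{-\beta(\|x\|_{\GG}^2+1)}$, so $\int_{\tau_0}^{\tau}\!\int_{\R^d}|\rho_j|\Phi \leq C\int_0^T\!\int_{\R^d}|\rho_j|e^{-\beta(\|x\|_{\GG}^2+1)} < \infty$ by assumption. For the gradient term, $|\nabla_{\GG}\Phi| = \mu|\nabla_{\GG}g|\Phi \leq C(\beta_1+1)\|x\|_{\GG}\,e^{-\beta_1\|x\|_{\GG}^2}$, and since $\|x\|_{\GG}e^{-(\beta_1-\beta)\|x\|_{\GG}^2}$ is bounded on $\R^d$, we again get $|\rho_j \nabla_{\GG}\Phi| \leq C|\rho_j|e^{-\beta(\|x\|_{\GG}^2+1)}$, which is integrable. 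I would note that the equivalence $d_{\SR}(0,x)=\|x\|_{\SR}$ versus $\|x\|_{\GG}$ (both homogeneous norms, hence comparable up to multiplicative constants) is used freely to pass between the norm in which derivatives are estimated via \Cref{lem:homog} and the norm appearing in $\Phi$; this only changes the absorbable constants.
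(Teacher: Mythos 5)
Your proposal is correct and follows essentially the same route as the paper: compute $\partial_t\Phi$, $\nabla_{\GG}\Phi$, $\Delta_{\GG}\Phi$ explicitly, use \Cref{lem:homog} and homogeneity to get $|\nabla_{\GG}(\|x\|_{\GG}^2)|\leq C\|x\|_{\GG}$ and $|\Delta_{\GG}(\|x\|_{\GG}^2)|\leq C$, then absorb all terms into $-\bar\beta(\|x\|_{\GG}^2+1)$ by taking $\bar\beta$ large and $\tau-\tau_0$ small, with \eqref{eq:1} following from $\beta_1>\beta$ and \eqref{eq:assumption}. Your treatment is in fact slightly more explicit than the paper's on the order of choice of $\beta_1$, $\bar\beta$, $\tau-\tau_0$ and on the verification of \eqref{eq:1}, which the paper leaves as an "easy consequence."
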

\proof 

From the homogeneity of the norm $\| \cdot\|_{\GG}$  and using \Cref{lem:homog} we have that 
\begin{equation*}
|X_j (\|x\|_{\GG}^{2})|^{2} \leq C \|x\|_{\GG}^{2}, \quad  |\nabla_{\GG} (\|x\|_{\GG}^{2})|^{2} \leq C \|x\|_{\GG}^{2} \quad \text{and}\quad |\Delta_{\GG}(\|x\|_{\GG}^{2})|^{2} \leq C
\end{equation*}
for a suitable constant $C \geq 0$. Hence, setting for simplicity $\bar\beta_1 = \beta_1 + \bar\beta(t-\tau_0)$ we have 
\begin{align*}
& \partial_t \Phi + \sigma \Delta_{\GG} \Phi + B \cdot \nabla_{\GG} \Phi + (\text{div}_{\GG}B) \Phi 
\\
=\ & (-\bar\beta(\|x\|_{\GG}^{2} + 1) + \sigma \bar\beta_{1}^{2} |\nabla_{\GG}(\|x\|_{\GG}^{2})|^{2} - \sigma \bar\beta_1 \Delta_{\GG}(\|x\|_{\GG}^{2}) - \bar\beta_1 B \cdot \nabla_{\GG}(\|x\|_{\GG}^{2}) + \text{div}_{\GG}B) \Phi
\\
\leq\ & \left(-\bar\beta(\|x\|_{\GG}^{2} + 1) + \sigma \bar\beta_{1}^{2} C\|x\|_{\GG}^{2} + \sigma \bar\beta_1 C + \bar\beta_1 \|B\|_{L^{\infty}} C\|x\|_{\GG} + \|\text{div}_{\GG}B\|_{L^{\infty}}\right) \Phi.
\end{align*}
The proof of \eqref{eq:0} is thus complete by choosing $\tau-\tau_0$ sufficiently small and $\bar\beta$ sufficiently large. Bounds \eqref{eq:1} are an easy consequence of the choice of $\beta_1$ and assumption \eqref{eq:assumption}. \qed

\medskip
\noindent{\it Proof of \Cref{prop:uniqueness}.} 
Without any loss of generality, we assume $Q \geq 0$ and, we proceed by contradiction assuming that $\rho_1 \not = \rho_2$. Set
\begin{equation*}
\tau_0 = \inf \{t \in [0,T]: \rho_1(\cdot, t) \not= \rho_2(\cdot, t)\}. 
\end{equation*}
The continuity of $\rho_j$, for $j=1, 2$, ensure that the function $\rho = \rho_1 - \rho_2$ satisfies
\begin{equation}\label{eq:48bis}
\begin{cases}
\partial_t \rho - \sigma \Delta_{\GG} \rho + B \cdot \nabla_{\GG} \rho + Q \rho = 0, & (t, x) \in (\tau_0, T) \times \R^d
\\
\rho(\tau_0, x) = 0, & x \in \R^d. 
\end{cases}
\end{equation}
For any $\eps > 0$ define the function 
\begin{equation*}
w(t, x) = \sqrt{\rho(t, x)^2 + \eps}
\end{equation*}
and observe that the following equalities hold
\begin{align*}
& \partial_t w = \frac{1}{w} \rho\partial_t \rho
& X_j w = \frac{1}{w}\rho X_j \rho
\\
& X_{j}^{2} w = \frac{\eps}{w^3} (X_j \rho)^2 + \frac{\rho}{w}X_{j}^{2} \rho
& \Delta_{\GG} w = \frac{\eps}{w^3}|\nabla_{\GG}\rho|^2 + \frac{\rho}{w} \Delta_{\GG}\rho. 
\end{align*}
Therefore, multiplying \eqref{eq:48bis} by $\frac{\rho}{w}$ we get
\begin{equation*}
\partial_t w = \sigma \Delta_{\GG} w - \sigma \frac{\eps}{w^3}|\nabla_{\GG}\rho|^2 - B \cdot \nabla_{\GG} w - Q\frac{\rho^2}{w} \leq \sigma \Delta_{\GG} w - B \cdot \nabla_{\GG}w, \quad (t, x) \in (\tau_0, T] \times \R^d.
\end{equation*}
So, for any nonnegative test function $v \in C^{\infty}([\tau_0, T] \times \R^d)$ with bounded support in space and for any $t \in [\tau_0, T]$ there holds
\begin{multline*}
\int_{\R^d} w(t, x) v(t, x)\ dx - \int_{\R^d} w(\tau_0, x) v(\tau_0, x)\ dx 
\\
\leq \int_{\tau_0}^{t} \int_{\R^d} w(s, x) (\partial_t v(s, x) + \sigma \Delta_{\GG} v(s, x) + \text{div}_{\GG}(v(s, x)B))\ dxds.
\end{multline*}
Since $w( \tau_0, \cdot)= \eps$, letting $\eps \downarrow 0$ we deduce 
\begin{equation*}
\int_{\R^d} |\rho(t, x)| v(t, x)\ dx \leq \int_{\tau_0}^{t} \int_{\R^d} |\rho(t, x)|\left|\partial_t v + \sigma \Delta_{\GG}v + \text{div}_{\GG} (v(t, x) B)\right|\ dsdx. 
\end{equation*}
Choose $t \in [\tau_0, \tau]$ and $v=\xi_R \Phi$ where $\tau$ and $\Phi$ are respectively the constant and the function introduced in \Cref{lem:lemma} and $\xi_R \in C^{\infty}(\R^d)$ is a cut-off function such that 
\begin{equation*}
\xi_R(x) = 1, \quad \text{if}\,\, |x| \leq R, \quad \xi_R(x) = 0 , \quad \text{if} \,\, |x| \geq R +1 \quad \text{and}\,\, \|D\xi_R\|_{L^\infty} + \|D^2 \xi_R\|_{L^\infty} \leq 2. 
\end{equation*}
Hence, we get
\begin{multline*}
\int_{\R^d} |\rho(t, x)| \xi_R(x)\Phi(t, x)\ dx
\\
\leq \int_{\tau_0}^{t} \int_{B_{R+1} \backslash B_R} |\rho(s, x)|\Big|(\sigma\Delta_{\GG} \xi_R(x) + B \cdot \nabla_{\GG}\xi_R(x))\Phi(s, x) + 2 \sigma \nabla_{\GG}\xi_R(x) \cdot \nabla_{\GG} \Phi(s, x)\Big|\ dsdx. 
\end{multline*}
Letting $R \uparrow \infty$, by dominated convergence theorem and \Cref{lem:lemma} we have that the right hand side converges to zero and we obtain 
\begin{equation*}
\int_{\R^d} |\rho(t, x)| \Phi(t, x)\ dx \leq 0, \quad \forall\ t \in [\tau_0, \tau]
\end{equation*}
 which entails $\rho(t, x) = 0$ in $(\tau_0, \tau) \times \R^d$ contradicting the definition of $\tau_0$. \qed
 \vskip.4truecm
By \Cref{prop:uniqueness}, the classical solution constructed in \Cref{prop:ex_FP} is unique, hence we proved the following corollary.
\begin{corollary}\label{uniqueness}
Under the assumptions of 
\Cref{thm:ex_FP1},
there exists a unique bounded classical solution $\rho \in C((0,T); C^{2,\nu}_{\GG, \textrm{loc}}(\R^{d}))$ of \eqref{eq:FP}. 
\end{corollary}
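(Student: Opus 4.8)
The plan is to obtain \Cref{uniqueness} as a direct consequence of the existence statement in \Cref{prop:ex_FP} and the uniqueness statement in \Cref{prop:uniqueness}; the only genuine work is to recast the Fokker--Planck equation \eqref{eq:FP} in the linear form \eqref{eq:GFP} with bounded coefficients. Existence is already granted: \Cref{prop:ex_FP} produces, under the assumptions of \Cref{thm:ex_FP1}, a classical bounded solution $\rho\in C((0,T);C^{2,\delta}_{\GG,\textrm{loc}}(\R^d))$ of \eqref{eq:FP} satisfying \eqref{L1}, and since $C^{2,\delta}_{\GG,\textrm{loc}}\subset C^{2,\nu}_{\GG,\textrm{loc}}$ whenever $\nu\le\delta$, this $\rho$ has the regularity claimed in the corollary. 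It thus remains to prove uniqueness.

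For the reduction I would use the hypothesis $b(t,\cdot)\in (C^{1,\delta}_{\GG}(\R^d))^m$ with $\sup_{t}\|b(t,\cdot)\|_{(C^{1,\delta}_{\GG}(\R^d))^m}<\infty$ to expand the horizontal divergence,
\begin{equation*}
\diver_{\GG}(b(t,x)\rho)=(\diver_{\GG}b(t,x))\,\rho+b(t,x)\cdot\nabla_{\GG}\rho,
\end{equation*}
so that a function $\rho$ solves \eqref{eq:FP} if and only if it solves \eqref{eq:GFP} with $B:=-b$ and $Q:=-\diver_{\GG}b$. The regularity of $b$ guarantees that $B$ and $Q$ are bounded and continuous on $[0,T]\times\R^d$ and that $B$ has a bounded, continuous horizontal gradient, so the coefficient hypotheses of \Cref{prop:uniqueness} are met. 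Given two bounded classical solutions $\rho_1,\rho_2\in C((0,T);C^{2,\nu}_{\GG,\textrm{loc}}(\R^d))$, I would then check the decay condition \eqref{eq:assumption}: since each $\rho_j$ is bounded and, using that $\|x\|_{\GG}^{2k!}\ge |x_j|^{2k!/j}$ for each layer $j$ (so that $\|x\|_{\GG}^2$ dominates a sum of positive powers of the Euclidean coordinates), one has $\int_{\R^d}e^{-\beta(\|x\|_{\GG}^2+1)}\,dx<\infty$ for every $\beta>0$, the weighted integrals $\int_0^T\!\int_{\R^d}|\rho_j(t,x)|\,e^{-\beta(\|x\|_{\GG}^2+1)}\,dx\,dt$ are finite. \Cref{prop:uniqueness} then forces $\rho_1=\rho_2$, which together with the previous paragraph proves the corollary.

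I do not expect any serious obstacle: the statement is essentially bookkeeping on top of \Cref{prop:ex_FP} and \Cref{prop:uniqueness}. The only points deserving a line of care are (i) verifying that the Gaussian-type weight built from the homogeneous norm is integrable over $\R^d$, so that boundedness of the $\rho_j$ already implies the decay hypothesis \eqref{eq:assumption}, and (ii) reconciling the Hölder exponents $\delta$ (coming from the existence proposition) and $\nu$ (appearing in \Cref{prop:uniqueness} and in the corollary), which is harmless since one may always decrease the exponent.
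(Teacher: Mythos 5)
Your proposal is correct and follows essentially the same route as the paper, which deduces the corollary directly by combining the existence of the solution from \Cref{prop:ex_FP} with the uniqueness result of \Cref{prop:uniqueness}. The reduction to \eqref{eq:GFP} with $B=-b$, $Q=-\diver_{\GG}b$ and the check that bounded solutions satisfy the weighted integrability hypothesis \eqref{eq:assumption} (via integrability of $e^{-\beta(\|x\|_{\GG}^2+1)}$) are exactly the bookkeeping the paper leaves implicit.
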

%
%

\subsubsection{H\"older regularity and flat metric}\label{holdersub}
Next, we prove H\"older regularity of the solution $\rho$ w.r.t. the so-called flat metric $d_0$ distance here defined.
There are many ways to metrize weak convergence of measures and the one we use here is the following (see, for instance, \cite{bib:CV}): the bounded Lipschitz distance, also called Fortet-Mourier distance of flat Wasserstein metric
\begin{equation*}
d_{0}(m ,m') = \sup\left\{\int_{\R^d} f(x)\,dm(x)-\int_{\R^d} f(x)\,dm'(x) : f : \R^d \rightarrow \R \,\, \text{s.t.}\,\,  \|f\|_{C^{0,1}_{\GG}(\R^d)} \leq 1 \right\}. 
\end{equation*}

\begin{proposition}\label{holder}
Let $\rho$ be the unique solution to~\eqref{eq:FP} constructed in \Cref{prop:ex_FP}. 
Then, there exists $C_{\rho} \geq 0$ such that
\begin{equation*}
d_0 (\rho_t, \rho_s) \leq C_{\rho} |t-s|^{\frac{1}{2}} \quad \forall t, s \in [0,T]. 
\end{equation*}
\end{proposition}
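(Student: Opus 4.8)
The plan is to exploit the weak formulation (ii) of Proposition 3.7 (equivalently item (ii) of Theorem 1.1) to estimate the increment $\int_{\R^d}\bigl(\rho(t,x)-\rho(s,x)\bigr)f(x)\,dx$ for a fixed test function $f$ with $\|f\|_{C^{0,1}_{\GG}(\R^d)}\le 1$, and then take the supremum over such $f$. The subtlety is that $f$ is only horizontally Lipschitz, not smooth, so it is not directly an admissible test function in \eqref{FPweak}; I would first regularize $f$ by a group convolution $f_\varepsilon$ (mollification adapted to the homogeneous structure, as in the standard theory on homogeneous Lie groups), observing that $\|f_\varepsilon\|_{L^\infty}\le \|f\|_{L^\infty}\le 1$, that $\|\nabla_{\GG}f_\varepsilon\|_{L^\infty}$ is bounded by $[f]_{C^{0,1}_{\GG}}\le 1$ uniformly in $\varepsilon$, and that $f_\varepsilon\to f$ locally uniformly. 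Since by Proposition 3.6 we have $\rho(t,\cdot)\in L^1(\R^d)$ with $\int\rho(t,x)\,dx=1$ for every $t$, the quantity $\int_{\R^d}(\rho(t,x)-\rho(s,x))f_\varepsilon(x)\,dx$ converges to $\int_{\R^d}(\rho(t,x)-\rho(s,x))f(x)\,dx$ as $\varepsilon\to0$.

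With $f_\varepsilon$ (now time-independent and smooth, hence certainly in $C([0,T];W^{1,2}_{\GG}(\R^d))$ with $\partial_t f_\varepsilon=0$) plugged into \eqref{FPweak} written between times $s$ and $t$ (say $s<t$), the $\partial_t\varphi$ term drops and one is left with
\begin{equation*}
\int_{\R^d}\bigl(\rho(t,x)-\rho(s,x)\bigr)f_\varepsilon(x)\,dx
= -\iint_{(s,t)\times\R^d}\nabla_{\GG}f_\varepsilon(x)\cdot\bigl(\sigma\nabla_{\GG}\rho(\tau,x)+b(\tau,x)\rho(\tau,x)\bigr)\,dxd\tau.
\end{equation*}
I would bound the right-hand side in absolute value by
\begin{equation*}
\|\nabla_{\GG}f_\varepsilon\|_{L^\infty}\left(\sigma\iint_{(s,t)\times\R^d}|\nabla_{\GG}\rho|\,dxd\tau
+\|b\|_{L^\infty}\iint_{(s,t)\times\R^d}\rho\,dxd\tau\right).
\end{equation*}
The second integral equals $(t-s)$ since $\rho\ge0$ and $\int\rho(\tau,\cdot)=1$; for the first, I apply Cauchy--Schwarz in space-time: $\iint_{(s,t)\times\R^d}|\nabla_{\GG}\rho|\le (t-s)^{1/2}\bigl(\iint_{(0,T)\times\R^d}|\nabla_{\GG}\rho|^2\bigr)^{1/2}$, where the last factor is finite and bounded by $K^{1/2}\|\rho_0\|_{L^2(\R^d)}$ thanks to the second estimate in \eqref{eq:step7} of Proposition 3.7. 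Collecting terms, and using $(t-s)\le T^{1/2}(t-s)^{1/2}$, yields
\begin{equation*}
\left|\int_{\R^d}\bigl(\rho(t,x)-\rho(s,x)\bigr)f_\varepsilon(x)\,dx\right|\le C_\rho\,|t-s|^{1/2}
\end{equation*}
with $C_\rho$ depending only on $\sigma$, $\|b\|_{L^\infty}$, $T$, $K$ and $\|\rho_0\|_{L^2(\R^d)}$, hence independent of $\varepsilon$ and of $f$. Letting $\varepsilon\to0$ and taking the supremum over all admissible $f$ gives $d_0(\rho_t,\rho_s)\le C_\rho|t-s|^{1/2}$.

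The main obstacle is the regularization step: one must make sure that the smoothing of the merely horizontally-Lipschitz test function $f$ preserves the bound $\|\nabla_{\GG}f_\varepsilon\|_{L^\infty}\le 1$ (or at most $1+o(1)$) uniformly in $\varepsilon$, and that no boundary-at-infinity terms appear when passing $f_\varepsilon$ through the weak formulation — this is where the global integrability $\rho(\tau,\cdot)\in L^1$ with unit mass, together with the finiteness of $\iint|\nabla_{\GG}\rho|^2$, is essential and allows the use of dominated convergence. Once these integrability facts from Propositions 3.6 and 3.7 are in hand, the rest is the routine Cauchy--Schwarz estimate sketched above.
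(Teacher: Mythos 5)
There is a genuine gap at the Cauchy--Schwarz step. You bound
\begin{equation*}
\iint_{(s,t)\times\R^d}|\nabla_{\GG}\rho|\,dx\,d\tau\;\le\;(t-s)^{1/2}\Bigl(\iint_{(0,T)\times\R^d}|\nabla_{\GG}\rho|^2\,dx\,d\tau\Bigr)^{1/2},
\end{equation*}
but this inequality uses Cauchy--Schwarz against the constant function $1$ on the cylinder $(s,t)\times\R^d$, whose Lebesgue measure is infinite: the correct factor would be $\bigl|(s,t)\times\R^d\bigr|^{1/2}=\infty$, not $(t-s)^{1/2}$. An $L^2$ space-time bound on $\nabla_{\GG}\rho$ over an unbounded spatial domain gives no control whatsoever on its $L^1$ norm, so the diffusive term $\sigma\iint\nabla_{\GG}f_\varepsilon\cdot\nabla_{\GG}\rho$ cannot be estimated this way. (A weighted Cauchy--Schwarz, writing $|\nabla_{\GG}\rho|=\rho^{1/2}\,|\nabla_{\GG}\rho|/\rho^{1/2}$, would give the right $(t-s)^{1/2}$ from $\int\rho=1$, but it requires a bound on the Fisher-information-type quantity $\iint|\nabla_{\GG}\rho|^2/\rho$, which is not established in \Cref{prop:ex_FP1} and is not assumed.)

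The paper circumvents exactly this obstacle by never letting $\nabla_{\GG}\rho$ appear: after mollifying the test function with a group mollifier $\xi^\eps$, it integrates by parts \emph{twice} in the diffusive term, so the identity reads
\begin{equation*}
\int_{\R^d}\varphi_\eps\,(\rho(t,\cdot)-\rho(s,\cdot))\,dx=\int_s^t\!\!\int_{\R^d}\bigl(\sigma\Delta_{\GG}\varphi_\eps-b\cdot\nabla_{\GG}\varphi_\eps\bigr)\rho\,dx\,dz,
\end{equation*}
and only $\rho$ itself, with unit mass, is integrated. The price is $\|\Delta_{\GG}\varphi_\eps\|_{L^\infty}\le \eps^{-1}[\varphi]_{C^{0,1}_{\GG}}$, so the right-hand side is of order $\eps^{-1}|t-s|$, while replacing $\varphi$ by $\varphi_\eps$ costs $\|\varphi-\varphi_\eps\|_{L^\infty}\le\eps$; optimizing $\eps\sim|t-s|^{1/2}$ produces the H\"older exponent $1/2$. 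Your first integration by parts and the treatment of the drift term (using $\rho\ge0$, $\int\rho=1$) are fine, so the proof is repaired by moving one more derivative onto $\varphi_\eps$ and optimizing in $\eps$, rather than invoking the $L^2$ gradient estimate of \eqref{eq:step7}, which is in fact not needed for this proposition.
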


\proof
We argue adapting some ideas of~\cite[Proposition 6.6]{Ersland_2021}. We 
consider the smooth function
\begin{equation*}
\xi(x)=\left\{\begin{array}{ll}
\exp\left\{\frac{1}{\|x\|_\GG^{2k!}-1}\right\}&\quad\textrm{if }\|x\|_\GG\leq 1\\0&\quad\textrm{otherwise.}
\end{array}\right.
\end{equation*}

For $\eps > 0$, let
\begin{equation*}
\xi^{\eps}(x) = \frac{C}{\eps^Q}\xi\left(\delta_{\eps^{-1}}x\right) \qquad (x \in \R^d)
\end{equation*}
be a smooth mollifier with support in $B(0,\eps)$ and where the constant $C$ is independent of~$\eps$ and such that $\int \xi^{\eps}dx=1$ (we recall that $\delta_{\eps^{-1}}(x)$ denotes the dilation of radius $\eps^{-1}$). Note that, by homogeneity of the norm $\| \cdot \|_{\GG}$ we have
\begin{equation}\label{hom}
X_j \xi^{\eps}(x) = \frac{1}{\eps}X_j \xi (x). 
\end{equation}
Let $\varphi $ be a real valued function with $\|\varphi\|_{C^{0,1}_{\GG}(\R^d)}\leq 1$  
and let $\varphi_{\eps}(x) = \xi^{\eps} \star \varphi(x)$ where the symbol ``$\star$'' denotes the convolution based on the operation of the group. Note that, by standard calculus and Lagrange theorem (see~\cite[Theorem 20.3.1]{Bonfiglioli}, there holds:
\begin{equation*}
\|\varphi-\varphi_{\eps}\|_{L^\infty}\leq \eps.
\end{equation*}
Then, 
\begin{equation*}
\int_{\R^d} \varphi_{\eps}(x) (\rho(t, x) - \rho(s, x))\ dx = \int_{s}^{t} \int_{\R^d} (\Delta_{\GG} \varphi_{\eps}(x) - b \cdot \nabla_{\GG} \varphi_{\eps}(x))\ \rho(z, x)\ dzdx. 
\end{equation*}
First, from \eqref{hom} and standard calculus, we obtain
\begin{equation*}
\|\Delta_{\GG} \varphi_{\eps}\|_{C^0(\R^d)} \leq \frac{1}{\eps} \| \varphi\|_{C^{0,1}_{\GG}(\R^d)}. 
\end{equation*}
Hence, 
\begin{equation*}
\int_{\R^d} \varphi_{\eps}(x) (\rho(t, x) - \rho(s, x))\ dx \leq 2C \frac{1}{\eps}(1+\|b\|_{L^\infty})\|\varphi\|_{C^{0,1}_{\GG}(\R^d)}|t-s|
\end{equation*}
which yields to 
\begin{align*}
 \int_{\R^d} \varphi(x) (\rho(t, x) - \rho(s, x))\ dx 
\leq\ & \int_{\R^d} \varphi_{\eps}(x) (\rho(t, x) - \rho(s, x))\ dx + 2 \| \varphi - \varphi_{\eps}\|_{L^\infty}
\\
\leq\ & C\left(\frac{1}{\eps}|t-s| + \eps \right).
\end{align*}
In conclusion, minimizing over $\eps >0$, we get
\begin{equation*}
d_0 (\rho_t, \rho_s) \leq C_{\rho}|t-s|^{\frac{1}{2}} \quad 0 \leq s \leq t \leq T. \eqno\square
\end{equation*}


\vspace{0.5cm}
\noindent{\it Proof of \Cref{thm:ex_FP1}.} From the above analysis we have that existence, uniqueness, $(i)$ and ($ii$) follow from  \Cref{prop:ex_FP}, \Cref{prop:ex_FP1} and \Cref{prop:uniqueness}. Finally, ($iii$) is proved in \Cref{holder}. \qed

\section{Hamilton-Jacobi equation}
\label{sec:HJB}

\subsection{Small-time existence of solutions}

\noindent{\it Throughout this section we assume that assumptions {\bf (HP)} and {\bf (HP' )} are in force and we study equation~\eqref{eq:interestedin}. }

%
We introduce, for simplicity of notation, the space
\begin{equation}\label{CHI}
\X(T)=C([0,T]; W^{2, \infty}_{\GG}(\R^{d}))
\end{equation}
equipped with the norm
\begin{equation*}
\|\varphi\|_{\X(T)} = \sup_{t \in [0,T]}\|\varphi(t)\|_{W^{2, \infty}_{\GG}(\R^{d})}.
\end{equation*}

Before, for proving the existence of a small-time solution to \eqref{eq:interestedin} we need the following decay estimate for the heat semi-group~$e^{t\Delta_{\GG}}$ generated by the horizontal Laplacian. 

\begin{lemma}\label{decayest}
For any $t \in [0,T]$ and any $\varphi \in L^{\infty}(\R^{d})$ we have that 
\begin{equation}\label{eq:decay1}
\| e^{t\Delta_{\GG}}\varphi\|_{L^\infty(\R^{d})} \leq \| \varphi\|_{L^\infty(\R^{d})}, \qquad \| X_i e^{t\Delta_{\GG}}\varphi\|_{L^\infty(\R^{d})} \leq c(T)t^{-\frac{1}{2}}\| \varphi\|_{L^\infty(\R^{d})},
\end{equation}
where $c(T)$ is a constant depending only on $T$.
\end{lemma}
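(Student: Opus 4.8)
The plan is to obtain both estimates from the known properties of the heat kernel $p_t(x,y)$ associated with the sub-Laplacian $\Delta_\GG$ on the homogeneous Lie group $\GG$, together with the scaling induced by the dilations $\{\delta_\lambda\}$. First I would recall that $\Delta_\GG$ generates a contraction semigroup on $L^\infty(\R^d)$ given by convolution (in the group sense) with a smooth, strictly positive heat kernel $p_t$ satisfying $\int_{\R^d} p_t(0,y)\,dy = 1$; this immediately gives the first inequality, $\|e^{t\Delta_\GG}\varphi\|_{L^\infty}\le\|\varphi\|_{L^\infty}$, since $|e^{t\Delta_\GG}\varphi(x)| \le \|\varphi\|_{L^\infty}\int p_t(x,y)\,dy = \|\varphi\|_{L^\infty}$. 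The nontrivial point is the gradient bound, and here the key structural fact is the parabolic scaling: because $\Delta_\GG$ is homogeneous of degree $2$ with respect to the dilations (the $X_i$ being homogeneous of degree one), the kernel obeys $p_t(0,x) = t^{-Q/2}\,p_1\!\left(0,\delta_{t^{-1/2}}x\right)$, where $Q$ is the homogeneous dimension, and correspondingly $X_i\, p_t(0,\cdot)$ picks up an extra factor $t^{-1/2}$.

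The main steps, in order, would be: (1) write $e^{t\Delta_\GG}\varphi(x) = \int_{\R^d} p_t(x,y)\varphi(y)\,dy$ using left-invariance, so that $p_t(x,y) = p_t(0, x^{-1}\!*\!y)$ up to the usual normalization; (2) differentiate under the integral sign along $X_i$ (justified by the smoothness and Gaussian-type decay of $p_t$), obtaining $X_i e^{t\Delta_\GG}\varphi(x) = \int (X_i p_t)(x,y)\varphi(y)\,dy$; (3) invoke the Gaussian-type upper bounds for the heat kernel and its horizontal derivatives on homogeneous (Carnot) groups — see for instance the references \cite{Bonfiglioli}, \cite{Bramanti_2010}, \cite{Alexopoulos_2002} cited in the introduction — in the form
\begin{equation*}
|X_i p_t(0,x)| \le C\, t^{-\frac{Q+1}{2}} \exp\!\left(-c\,\frac{\|x\|_{\SR}^2}{t}\right),
\end{equation*}
valid for $t\in(0,T]$; (4) integrate this bound in $y$, using the change of variables $y = \delta_{\sqrt t}(z)$ whose Jacobian is $t^{Q/2}$, to get $\int |X_i p_t(0,x^{-1}*y)|\,dy \le C\, t^{-1/2}\int e^{-c\|z\|_{\SR}^2}\,dz = c(T)\,t^{-1/2}$, and conclude $\|X_i e^{t\Delta_\GG}\varphi\|_{L^\infty}\le c(T)\,t^{-1/2}\|\varphi\|_{L^\infty}$.

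Alternatively, one can bypass explicit kernel bounds via an abstract semigroup argument: the analyticity of $e^{t\Delta_\GG}$ on $L^\infty$ gives $\|\Delta_\GG e^{t\Delta_\GG}\|_{L^\infty\to L^\infty}\le C/t$, and then the identity $X_i = X_i e^{-\frac t2 \Delta_\GG} \cdot e^{\frac t2\Delta_\GG}$ combined with the a priori estimate $\|X_i\psi\|_{L^\infty}^2 \lesssim \|\psi\|_{L^\infty}\|\Delta_\GG\psi\|_{L^\infty}$ (an integration-by-parts/interpolation inequality that holds because $-\Delta_\GG = \sum X_i^* X_i$) yields $\|X_i e^{t\Delta_\GG}\varphi\|_{L^\infty}^2 \lesssim \|e^{\frac t2\Delta_\GG}\varphi\|_{L^\infty}\,\|\Delta_\GG e^{\frac t2\Delta_\GG}\varphi\|_{L^\infty} \lesssim \|\varphi\|_{L^\infty}\cdot \tfrac1t\|\varphi\|_{L^\infty}$, giving the claimed $t^{-1/2}$ decay. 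I expect the main obstacle to be a clean justification of differentiation under the integral and of the heat-kernel derivative bound in this anisotropic, possibly unbounded-coefficient setting; the cleanest route is to cite the Gaussian estimates for horizontal derivatives of the heat kernel on homogeneous Lie groups from the literature already referenced, and then the remaining computation is just the dilation-invariant scaling bookkeeping sketched above.
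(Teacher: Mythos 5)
Your main argument (steps 1--4) is exactly the paper's route: the paper writes $e^{t\Delta_{\GG}}\varphi=\Gamma(t)\star\varphi$ for the fundamental solution of $\partial_t-\Delta_{\GG}$ and invokes the Gaussian bounds for $\Gamma$ and its horizontal derivatives from Bonfiglioli--Lanconelli--Uguzzoni, which after the dilation-scaling computation give precisely the contraction bound and the $c(T)t^{-1/2}$ gradient decay. So the proposal is correct and essentially identical in approach, just spelled out in more detail (the abstract-semigroup alternative is unnecessary but harmless).
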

\proof
Let $\Gamma$ be the fundamental solution to the heat operator $(\partial_t-\Delta_{\GG})$, found in~\cite{Bonfiglioli_2003}. 
Following \cite[Theorem 1.2]{Bonfiglioli_2003} and by construction of the heat semigroup we get 
\begin{equation*}
\| e^{t\Delta_{\GG}}\varphi\|_{L^\infty(\R^{d})}  = \| \Gamma (t)\star \varphi \|_{L^\infty(\R^{d})}  \leq \| \varphi\|_{L^\infty(\R^{d})} . 
\end{equation*}
Similarly, still from \cite[Theorem 1.2]{Bonfiglioli_2003} we deduce the second estimate in \eqref{eq:decay1}.
\qed

\vspace{0.5cm}
\noindent{\it Proof of \Cref{thm:existence}.}  Consider $T>0$, which will be chosen later on. For any $k > 0$ we denote by $\X_{k}(T)$ the closed ball of radius $k$ in $\X(T)$, defined in \eqref{CHI}. Given $k > 0$, we consider the map 
\begin{equation*}
\Phi: \X_{k}(T) \to \X_{k}(T)
\end{equation*}
defined by
\begin{equation}\label{fixmap}
\Phi u(t) = e^{t\Delta_{\GG}}u_{0} + \int_{0}^{t}{e^{(t-s)\Delta_{\GG}}f(s, x, \nabla_{\GG}u(s))\ ds}, \quad \forall t \geq 0. 
\end{equation}
Next, we show that there exist $T_{0} > 0$ and $k > 0$  such that 
the map $\Phi$ is well defined, i.e., $\Phi u \in \X_{k}(T)$ for $u \in \X_{k}(T)$ and $\Phi$ is a contraction for any $T\leq T_0$. 

To do so, let us fix $k > 0$ and $u \in \X_{k}(T)$ for some $T > 0$. Then, from \eqref{eq:decay1} and assumptions 
{\bf (HP)}, 
we have that 
\begin{multline}\label{eq:term1}
\left\|  \int_{0}^{t}{e^{(t-s)\Delta_{\GG}}f(s, x, \nabla_{\GG}u(s))\ ds} \right\|_{L^\infty} \leq \int_{0}^{t}{\| f(s, x, \nabla_{\GG}u(s))\|_{L^\infty}\ ds}
\\
\leq \int_{0}^{t}{C_{f}(1+\| \nabla_{\GG} u(s)\|^{\gamma}_{L^\infty})\ ds} \leq C_{f}T(1+k^{\gamma}). 
\end{multline}
Moreover, still from \eqref{eq:decay1} and assumptions 
{\bf (HP)} and {\bf (HP')}, we have that 
\begin{multline}\label{eq:term2}
\left\| \nabla_{\GG}\int_{0}^{t}{e^{(t-s)\Delta_{\GG}}f(s, x, \nabla_{\GG}u(s))\ ds} \right\|_{L^\infty} \leq \int_{0}^{t}{\|\nabla_{\GG} e^{(t-s)\Delta_{\GG}} f(s, x, \nabla_{\GG} u(s))\|_{L^\infty}\ ds} 
\\
\leq c(T)\int_{0}^{t}{C_{f}(t-s)}^{-\frac{1}{2}}(1+\|\nabla_{\GG}u(s)\|^{\gamma}_{L^\infty})\ ds \leq C_{f}2c(T)T^{\frac{1}{2}}(1+k^{\gamma}) 
\end{multline}
 and 
\begin{multline}\label{termi}
\left\| X_iX_j\int_{0}^{t}{e^{(t-s)\Delta_{\GG}}f(s, x, \nabla_{\GG}u(s))}\ ds\right\|_{L^\infty} \leq\int_{0}^{t}\left\| X_ie^{(t-s)\Delta_{\GG}}X_jf(s, \cdot, \nabla_{\GG}u(\cdot,s))\right\|_{L^\infty} \ ds 
\\
\leq c(T)\int_{0}^{t}(t-s)^{-\frac12}\left\|X_jf(s, \cdot, \nabla_{\GG}u(\cdot,s))\right\|_{L^\infty} \ ds
\leq c(T)cT^{\frac12} (1+k^\gamma)
\end{multline}
where $c(T)$ is the constant introduced in~\eqref{eq:decay1} while $c$ is a constant, that depends on $C_f$, $d$ and $m$ (in particular, is independent of~$T$ and $k$) and may change from line to line.
Moreover, by~\eqref{eq:decay1}, standard arguments entail
\begin{align}\label{eq:term0}
\begin{split}
\left\|e^{t\Delta_{\GG}}u_{0}\right\|_{L^\infty(\R^{d})} \leq\ & \|u_{0}\|_{L^\infty(\R^{d})},
\\
\left\|\nabla_{\GG} e^{t\Delta_{\GG}}u_{0}\right\|_{L^\infty(\R^{d})} =\ &\left\|e^{t\Delta_{\GG}}\nabla_{\GG} u_{0}\right\|_{L^\infty(\R^{d})} \leq \|\nabla_{\GG}u_{0}\|_{L^\infty(\R^{d})}, 
\\
\left\| X_iX_j e^{t\Delta_{\GG}}u_0\right\|_{L^\infty(\R^{d})} \leq\ & \left\|e^{t\Delta_{\GG}} X_iX_j u_0\right\|_{L^\infty(\R^{d})} \leq \left\| X_iX_j u_0\right\|_{L^\infty(\R^{d})}. 
\end{split}
\end{align}

By relations~\eqref{eq:term1}, \eqref{eq:term2}, \eqref{termi} and~\eqref{eq:term0}, for $k > 0$ sufficiently large and $T$ sufficently small, there holds
\begin{equation*}
k > C_{f}\|u_0\|_{W^{2, \infty}_{\GG}(\R^{d})}+C_{f}c(1+k+k^{\gamma})T^{1/2}(T^{1/2}+c(T))
\end{equation*}
then, $\Phi u \in \X_{k}(T)$ for any $u \in \X_{k}(T)$.

Next, we proceed to show that $\Phi$ is a contraction. Let $u$, $v \in \X_{k}(T)$. Then, from \eqref{eq:decay1} we have that 
\begin{multline*}
\left\| \int_{0}^{t} e^{(t-s)\Delta_{\GG}} f(s,x, \nabla_{\GG} u(s))\ ds - \int_{0}^{t} e^{(t-s)\Delta_{\GG}} f(s,x, \nabla_{\GG} v(s))\ ds \right\|_{L^\infty}
\\
\leq \int_{0}^{t} \| f(s, x,\nabla_{\GG} u(s)) - f(s, x, \nabla_{\GG} v(s))\|_{L^\infty}\ ds. 
\end{multline*} 
So, by {\bf (HP)} we obtain 
\begin{align}\notag
& \left\| \int_{0}^{t} e^{(t-s)\Delta_{\GG}} f(s,x, \nabla_{\GG} u(s))\ ds - \int_{0}^{t} e^{(t-s)\Delta_{\GG}} f(s,x, \nabla_{\GG} v(s))\ ds \right\|_{L^\infty}
\\ \notag
\leq\ & \int_{0}^{t} C_{f} (\|\nabla_{\GG} u(s)\|_{L^\infty}^{\gamma-1} + \|\nabla_{\GG} v(s)\|_{L^\infty}^{\gamma-1}) \|\nabla_{\GG}u(s) - \nabla_{\GG} v(s)\|_{L^\infty}\ ds 
\\ \label{stima_1}
\leq\ & 2C_{f}Tk^{\gamma-1} \| u-v\|_{\X(T)}. 
\end{align}
By using similar arguments, one gets
\begin{align}\notag
& \left\| \nabla_{\GG} \int_{0}^{t} e^{(t-s)\Delta_{\GG}} f(s,x, \nabla_{\GG} u(s))\ ds -  \nabla_{\GG} \int_{0}^{t} e^{(t-s)\Delta_{\GG}} f(s,x, \nabla_{\GG} v(s))\ ds \right\|_{L^\infty}
\\ \label{stima_2}
\leq\ & 4c(T)C_{f}T^{\frac{1}{2}}k^{\gamma-1} \| u-v\|_{\X(T)}. 
\end{align}
Furthermore, for any $i,j\in\{1,\dots,m\}$, we have
\begin{eqnarray*}
A_{ij}&:=& \left\| X_iX_j \int_{0}^{t} e^{(t-s)\Delta_{\GG}} \left[f(s,x, \nabla_{\GG} u(s)) - f(s,x, \nabla_{\GG} v(s))\right]\ ds \right\|_{L^\infty([0,T]\times \R^d)}\\
&\leq& \int_{0}^{t} \left\| X_i e^{(t-s)\Delta_{\GG}} X_j\left[f(s,\cdot, \nabla_{\GG} u(\cdot,s)) - f(s,\cdot, \nabla_{\GG} v(\cdot,s))\right]\right\|_{L^\infty([0,T]\times \R^d)}
\ ds \\
&\leq& 2T^{1/2}c(T) \left\| X_j\left[f(s,\cdot, \nabla_{\GG} u(\cdot,s)) - f(s,\cdot, \nabla_{\GG} v(\cdot,s))\right]\right\|_{L^\infty([0,T]\times \R^d)}.
\end{eqnarray*}
Moreover, we have
\begin{equation*}\begin{array}{l}
\left\| X_j\left[f(s,\cdot, \nabla_{\GG} u(\cdot,s)) - f(s,\cdot, \nabla_{\GG} v(\cdot,s))\right]\right\|_{L^\infty([0,T]\times \R^d)}\\
\qquad\leq \left\|X_j\left[f(s,\cdot, \nabla_{\GG} u) - f(s,\cdot, \nabla_{\GG} v)\right]\right\|_{L^\infty([0,T]\times \R^d)}\\
\qquad \quad+\sum_{i=1}^{m}\left\|\partial_{p_i}f(s,\cdot, \nabla_{\GG} u(\cdot,s)) - \partial_{p_i}f(s,\cdot, \nabla_{\GG} v(\cdot,s))\right\|_{L^\infty([0,T]\times \R^d)} \left\|X_iX_j u\right\|_{L^\infty([0,T]\times \R^d)}\\
\qquad \quad+\sum_{i=1}^{m}\left\|\partial_{p_i}f(s,\cdot, \nabla_{\GG} v(\cdot,s))\right\|_{L^\infty([0,T]\times \R^d)} \left\|X_iX_j (u-v)\right\|_{L^\infty([0,T]\times \R^d)}.
\end{array}
\end{equation*}
Replacing the last inequality in the previous one, by assumption~{\bf (HP')}, we get
\begin{equation}\label{Astima}
A_{ij}\leq T^{1/2}c(T)c(1+k^{\gamma-1})\|u-v\|_{\X(T)}
\end{equation}
where $c$ is a constant, that depends on $C_f$, $d$ and $m$ (in particular, is independent of~$T$ and $k$) and may change from line to line. Hence, using ~\eqref{stima_1}, \eqref{stima_2} and \eqref{Astima}, we get
\begin{equation*}
\| \Phi u - \Phi v\|_{\X(T)} \leq cC_{f}T^{\frac{1}{2}}(T^{1/2}+2c(T))(1+k^{\gamma-1}) \|u-v\|_{\X(T)}, \quad \forall u, v \in \X_{k}(T)
\end{equation*}
and we conclude choosing $k$ such that 
\begin{equation*}
cC_{f}T^{\frac{1}{2}}(T^{1/2}+2c(T))(1+k^{\gamma-1}) < 1. 
\end{equation*}
Thus, from the fixed point theorem we obtain the existence of a unique solution in $\X_{k}(T)$.
Moreover, from the representation formula provided by the contraction argument, i.e., 
\begin{equation*}
u(t) = e^{t\Delta_{\GG}} u_0 + \int_{0}^{t} e^{(t-s)\Delta_{\GG}}f(s, x, \nabla_{\GG} u(t, x))\ ds
\end{equation*}
and the regularity of the fundamental solution (see \cite[Theorem 1.2]{Bonfiglioli_2003}) we deduce that  $u \in C^{1 + \frac{\alpha}{2}, 2+\alpha}_{\GG}((0,T] \times \R^{d})$. 
\qed


\subsection{Global existence of solutions}

In \Cref{thm:existence} we showed that for a sufficiently small time horizon $T$ there exists a solution to the general semilinear parabolic equation \eqref{eq:interestedin} in $\X(T)$. In this section, we go back considering the Hamilton-Jacobi
\begin{equation}\label{HJJ}
\begin{cases}
\partial_{t} u(t, x) - \sigma \Delta_{\GG} u(t, x) + |\nabla_{\GG} u (t, x)|^{\gamma} = F(t, x), & (t, x) \in (0,T] \times \R^{d} 
\\
u(0, x)=u_{0}(x), & x \in \R^{d}
\end{cases}
\end{equation}
and we prove that there exists a solution for any $T>0$.


In order to prove that such a solution exists for any arbitrary $T > 0$, the key point is the duality feature between the Hamilton-Jacobi equation and the Fokker-Plank equation studied so far. 
For this reason, we first show that the solution $u$ constructed in \Cref{thm:existence} taking $f(t,x,p)=F(t,x)-|p|^{\gamma}$ solves problem \eqref{HJJ} also in a suitable weak (energy) sense and, then, following a standard procedure (see for instance \cite{Cirant_2020}),  we provide the duality relation between the two equations in the sub-Riemannian setting.

%
%
%

\begin{lemma}\label{integrability}
Let $F$ be as in~\eqref{assFu_0}, $u_0$ as in~\eqref{u_0} and let $u$ be the solution of \eqref{HJJ} found in \Cref{thm:existence}.  For any $T\leq T_0$, we have 
\begin{itemize}
\item[(i)] $\nabla_{\GG} u \in L^{p}((0,T)\times\R^d)$ for every  $\gamma \leq p < \infty$;
\item[(ii)] $(\partial_t - \sigma \Delta_{\GG} )\ u \in L^2((0,T)\times\R^d)$;
\item[(iii)] for any $\varphi \in C([0,T]; L^2(\R^d))$ there holds
\begin{multline}\label{HJweak1}
\int_{s}^{\tau}\int_{\R^d}\varphi(t, x)\bigg(\partial_t u(t, x) -\sigma \Delta_{\GG} u(t, x)+ |\nabla_{\GG} u (t, x)|^{\gamma}\bigg) \ dtdx  \\ =  \int_{s}^{\tau}\int_{\R^d} \varphi(t, x)F(t, x)\ dtdx, \quad 0\leq s\leq \tau\leq T\leq T_0.
\end{multline}
\end{itemize}
\end{lemma}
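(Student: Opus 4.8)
The plan is to reduce the entire lemma to the single integrability statement $\nabla_{\GG} u \in L^2((0,T)\times\R^d)$. Indeed, by \Cref{thm:existence} we already know $u \in C([0,T];W^{2,\infty}_{\GG}(\R^d))$ with $\sup_{t\in[0,T]}\|u(t)\|_{W^{2,\infty}_{\GG}(\R^d)}\le\kappa(T_0)$, so $\nabla_{\GG} u\in L^\infty((0,T)\times\R^d)$; interpolating $L^2$ against $L^\infty$ then gives $\nabla_{\GG} u\in L^p((0,T)\times\R^d)$ for every $p\in[2,\infty]$, and since $\gamma\ge 2$ this is exactly~(i). Then (ii) is immediate, because the classical equation gives $(\partial_t-\sigma\Delta_{\GG})u=F-|\nabla_{\GG} u|^\gamma$ with $F\in L^2((0,T)\times\R^d)$ by~\eqref{assFu_0} and $|\nabla_{\GG} u|^\gamma\in L^2((0,T)\times\R^d)$ by (i) applied with $p=2\gamma$. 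Finally (iii) follows by testing the pointwise identity $\partial_t u-\sigma\Delta_{\GG} u+|\nabla_{\GG} u|^\gamma=F$ against $\varphi$: since $t\mapsto\|\varphi(t)\|_{L^2(\R^d)}$ is continuous on the compact interval $[0,T]$ it is bounded, so $\varphi\in L^2((s,\tau)\times\R^d)$, and each term of the identity lies in $L^2((s,\tau)\times\R^d)$ (for $|\nabla_{\GG} u|^\gamma$ by (i), for $F$ by~\eqref{assFu_0}, for $(\partial_t-\sigma\Delta_{\GG})u$ by (ii)); hence all products are in $L^1$ by Cauchy--Schwarz and one may integrate the identity over $(s,\tau)\times\R^d$.

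It therefore remains to prove $\nabla_{\GG} u \in L^2((0,T)\times\R^d)$. First I would record that $\nabla_{\GG} u_0\in L^2(\R^d)$: integrating by parts against the cut-offs $\xi_R$ to discard boundary terms (the $X_i$ are divergence-free and $u_0\in L^1(\R^d)$, $\Delta_{\GG} u_0\in L^\infty(\R^d)$) one gets
\[
\int_{\R^d}|\nabla_{\GG} u_0|^2\,dx=-\int_{\R^d}u_0\,\Delta_{\GG} u_0\,dx\le\|u_0\|_{L^1(\R^d)}\|\Delta_{\GG} u_0\|_{L^\infty(\R^d)}<\infty .
\]
Next I would differentiate along $X_i$ the Duhamel representation from the proof of \Cref{thm:existence} (with $f(t,x,p)=F(t,x)-|p|^\gamma$), using that $X_ie^{\tau\Delta_{\GG}}=e^{\tau\Delta_{\GG}}X_i$ for the left-invariant fields, to obtain
\[
\nabla_{\GG} u(t)=e^{t\Delta_{\GG}}\nabla_{\GG} u_0+\int_0^t\nabla_{\GG} e^{(t-s)\Delta_{\GG}}\bigl(F(s)-|\nabla_{\GG} u(s)|^\gamma\bigr)\,ds .
\]
Together with the $L^2$ analogue of \Cref{decayest}, i.e.\ $\|e^{\tau\Delta_{\GG}}h\|_{L^2}\le\|h\|_{L^2}$ and $\|\nabla_{\GG} e^{\tau\Delta_{\GG}}h\|_{L^2}\le c\,\tau^{-1/2}\|h\|_{L^2}$ (a consequence of the Gaussian-type bounds for $\Gamma$ and $\nabla_{\GG}\Gamma$ in~\cite{Bonfiglioli_2003} and Young's inequality), and the elementary bound $\||\nabla_{\GG} u(s)|^\gamma\|_{L^2(\R^d)}\le\|\nabla_{\GG} u\|_{L^\infty}^{\gamma-1}\|\nabla_{\GG} u(s)\|_{L^2(\R^d)}$, the function $\phi(t):=\|\nabla_{\GG} u(t)\|_{L^2(\R^d)}$ satisfies
\[
\phi(t)\le\phi(0)+c\int_0^t(t-s)^{-1/2}\|F(s)\|_{L^2(\R^d)}\,ds+c\,\kappa(T_0)^{\gamma-1}\int_0^t(t-s)^{-1/2}\phi(s)\,ds .
\]
Since $F\in L^2((0,T)\times\R^d)$, Young's inequality ($L^1\ast L^2\subset L^2$, the kernel $s^{-1/2}$ being in $L^1(0,T)$) shows the first two terms define an element of $L^2(0,T)$, and a weakly singular Gr\"onwall (Henry) inequality then yields $\phi\in L^2(0,T)$, that is $\nabla_{\GG} u\in L^2((0,T)\times\R^d)$.

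The delicate point, and the one I expect to be the main obstacle, is that this last estimate for $\phi$ is a priori circular: before it is closed one does not know that $|\nabla_{\GG} u(s)|^\gamma$ (equivalently $\phi(s)$) is finite in $L^2_x$, so the absorption underlying the Gr\"onwall step must be legitimized. I would do this by first running the estimate on a short interval $[0,\tau_1]$ on which $2c\,\kappa(T_0)^{\gamma-1}\sqrt{\tau_1}<1$, absorbing the nonlinear term to get $\phi\in L^2(0,\tau_1)$, and then --- since now $\phi(t)<\infty$ for a.e.\ $t$ --- restarting from such a time and covering $[0,T]$ in finitely many steps. (Alternatively one first shows $\sup_{[0,T]}\|\nabla_{\GG} u(t)\|_{L^{2+\varepsilon}(\R^d)}<\infty$ for every $\varepsilon>0$, where now the inhomogeneous term is bounded uniformly in $t$ thanks to $\sup_t\|F(t)\|_{L^\infty}<\infty$ combined with $F\in L^2$, and then interpolates with $L^\infty$; this already gives (i)--(iii), only the endpoint $p=\gamma$ requiring the $L^2$-in-time argument above.) Everything else --- the heat kernel estimates, the interpolation and the Cauchy--Schwarz steps --- is routine.
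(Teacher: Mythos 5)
Your reduction of (ii) and (iii) to (i), and of (i) to a single space--time integrability statement for $\nabla_{\GG}u$ plus interpolation with the $L^\infty$ bound from \Cref{thm:existence}, is sound and matches the spirit of the paper. The divergence is in how that integrability is obtained, and it is exactly at the point you flag that your argument has a genuine gap: the quantity $\phi(t)=\|\nabla_{\GG}u(t)\|_{L^2(\R^d)}$ is not known to be finite a priori. \Cref{thm:existence} only gives $u\in C([0,T];W^{2,\infty}_{\GG}(\R^d))$, with no spatial decay, so $\phi$ could be identically $+\infty$; in that case your integral inequality reads $\infty\le\infty$ and carries no information. The weakly singular Gr\"onwall (Henry) lemma requires $\phi$ to be (locally) integrable, and the proposed repair --- absorbing $c\,\kappa(T_0)^{\gamma-1}\int_0^t(t-s)^{-1/2}\phi(s)\,ds$ on a short interval --- is subject to the same circularity: one may only absorb a term already known to be finite. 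The parenthetical alternative ($\sup_t\|\nabla_{\GG}u(t)\|_{L^{2+\varepsilon}}<\infty$) suffers from the identical problem. To close the argument along your lines you would have to produce the finiteness first, e.g.\ by running your $L^2$ estimate on the Picard iterates of the contraction map \eqref{fixmap} (which do have $L^2$ horizontal gradients by induction, since $\nabla_{\GG}u_0\in L^2$ and $F\in L^2$) with bounds uniform in the iteration, or by redoing the fixed point of \Cref{thm:existence} in the smaller space $\X_k(T)\cap\{\,\nabla_{\GG}u\in C([0,T];L^2(\R^d))\,\}$.

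The paper avoids the issue entirely by exploiting the sign of the nonlinearity instead of an $L^2$ energy estimate: since $u_0\ge 0$, comparison with the horizontal heat flow $\widetilde u$ of $u_0$ gives $0\le u\le\widetilde u$, hence $u(t)\in L^1(\R^d)$; integrating the Duhamel representation \eqref{repreF} over space and time, and using that $e^{t\Delta_{\GG}}$ preserves the integral of the nonnegative function $|\nabla_{\GG}u|^{\gamma}$, one bounds $\iint|\nabla_{\GG}u|^{\gamma}\,dx\,dt$ directly by the finite remaining terms. This yields $\nabla_{\GG}u\in L^{\gamma}((0,T)\times\R^d)$ with no Gr\"onwall argument and no a priori finiteness problem, after which interpolation, (ii) and (iii) proceed exactly as you wrote them. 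Your integration by parts showing $\nabla_{\GG}u_0\in L^2$ and the $L^2$ heat semigroup estimates are correct, but they are not needed in the paper's route, and as written they do not rescue the circular step.
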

\begin{proof}
Recall that the solution $u$ constructed in \Cref{thm:existence}  belongs to $C([0,T],W^{2,\infty}_\GG(\R^d))$, 
with $T\leq T_0$
and it is given by 
\begin{equation}\label{repreF}
u(t) = e^{t\Delta_{\GG}} u_0 + \int_{0}^{t} e^{(t-s)\Delta_{\GG}}( F(s, x) - |\nabla_{\GG} u(t, x)|^{\gamma})\ ds\qquad \forall t \in [0,T_0]. 
\end{equation}
$(i)$. Following the arguments of~\cite[Theorem B]{ABA} we get that $u(t) \in L^1(\R^d)$:
let $\widetilde{u}$ solve $\partial_t \widetilde{u} - \sigma \Delta_{\GG} \widetilde{u} = 0$ with $\widetilde{u}(0)=u_0$; we refer to \cite[Theorem 1.2]{Bonfiglioli_2003} for the representation formula of~$\widetilde{u}$ and for the regularity of the fundamental solution for the heat equation. By comparison principle, using that $u_0\geq 0$,
we get $0 \leq u\leq \widetilde{u}$ and consequently $u(t) \in L^1(\R^d)$. 
Hence, integrating \eqref{repreF} we deduce $\nabla_{\GG} u \in L^{\gamma}((0,T) \times \R^d)$; Since $\nabla_\GG u \in L^\infty((0,T)\times\R^d)$, by interpolation we conclude $(i)$. \\
$(ii)$. It is an immediate consequence of point~$(i)$ and assumption ~\eqref{assFu_0}.\\
$(iii)$. Clearly, for all $\varphi \in C^{\infty}_{c}([0,T] \times \R^d)$ we have
\begin{multline*}
\int_{s}^{\tau}\int_{\R^d} \varphi(t, x)\bigg(\partial_t u(t, x) -\sigma \Delta_{\GG} u(t, x)+ |\nabla_{\GG} u (t, x)|^{\gamma}\bigg) \ dtdx \\ =  \int_{s}^{\tau}\int_{\R^d} \varphi(t, x)F(t, x)\ dtdx. 
\end{multline*}
By a standard approximation argument, we infer~\eqref{HJweak1}.
\end{proof}
In the following lemma we get a useful relation between the solutions $u$ and $\mu$ respectively of the Hamilton-Jacobi and Fokker-Planck equations using the duality structure of these equations.


\begin{lemma}\label{lem:dualityarg}
Let $u \in \X(T_{0})$ be a solution to \eqref{HJJ} as in \Cref{thm:existence}. Let $\tau \in (0,T_{0}]$. For any $\mu_\tau\in L^\infty(\R^d)\cap L^1(\R^d)$ with $\mu_\tau\geq 0$, let $\mu$ be the solution to 
\begin{equation}\label{eq:FKequation}
\begin{cases}
-\partial_{t}\mu(t, x) - \sigma \Delta_{\GG} \mu(t, x) - \text{div}_{\GG}(\gamma|\nabla_{\GG} u(t, x)|^{\gamma - 2}\nabla_{\GG}u(t, x) \mu(t, x))=0, \quad (t, x) \in (0, \tau)\times \R^d
\\
\mu(\tau, x)=\mu_{\tau}(x), \quad x \in \R^d
\end{cases}
\end{equation}
found in \Cref{thm:ex_FP1}. Then, for any $s \in (0, \tau)$ we have that
\begin{multline}\label{eq:ABduality}
\int_{\R^{d}}{u(\tau, x) \mu(\tau, x)\ dx} = \int_{\R^{d}} u(s, x)\mu(s, x)\ dx 
\\ + \int_{s}^{\tau}\int_{\R^{d}} (\gamma - 1) |\nabla_{\GG} u(t, x)|^{\gamma} \mu(t, x)\ dtdx + \int_{s}^{\tau} \int_{\R^{d}} F(t, x)\mu(t, x) dtdx. 
\end{multline}
\end{lemma}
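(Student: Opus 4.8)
The plan is to exploit the formal duality between the Hamilton--Jacobi equation \eqref{HJJ} and the Fokker--Planck equation \eqref{eq:FKequation}: multiply the first equation by $\mu$, the second by $u$, add them, and integrate by parts over $(s,\tau)\times\R^d$, so that the second-order terms $\sigma\Delta_\GG u\,\mu$ and $\sigma\Delta_\GG\mu\,u$ cancel and the first-order term coming from $\diver_\GG(\gamma|\nabla_\GG u|^{\gamma-2}\nabla_\GG u\,\mu)$ combines with $|\nabla_\GG u|^\gamma\mu$ to leave $(\gamma-1)|\nabla_\GG u|^\gamma\mu$. Concretely, I would start from the weak formulation \eqref{HJweak1} of \Cref{integrability} with test function $\varphi=\mu$ (which is legitimate since $\mu\in C((0,\tau);C^{2,\delta}_{\GG,\mathrm{loc}})$ and, by \Cref{thm:ex_FP1}, is bounded with $\mu(t,\cdot)\in L^1(\R^d)$, so $\mu\in C([s,\tau];L^2(\R^d))$ after checking $L^2$-integrability), and from the weak formulation \eqref{FPweak} of the Fokker--Planck equation (item $(ii)$ of \Cref{thm:ex_FP1}, suitably time-reversed) with test function $\varphi=u$, noting $u\in C([0,T];W^{2,\infty}_\GG(\R^d))$ and $\partial_t u\in L^2$ by \Cref{integrability}$(ii)$, hence $u$ is an admissible test function for \eqref{FPweak}.

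The key steps, in order, are: (1) record the time-reversed weak formulation of \eqref{eq:FKequation}, i.e.
\[
\int_{\R^d}\mu(\tau)\psi(\tau)\,dx-\int_{\R^d}\mu(s)\psi(s)\,dx-\int_s^\tau\!\!\int_{\R^d}\partial_t\psi\,\mu\,dxdt+\int_s^\tau\!\!\int_{\R^d}\nabla_\GG\psi\cdot\big(-\sigma\nabla_\GG\mu-\gamma|\nabla_\GG u|^{\gamma-2}\nabla_\GG u\,\mu\big)dxdt=0
\]
for admissible $\psi$, and substitute $\psi=u$; (2) in \eqref{HJweak1} take $\varphi=\mu$ and integrate the term $\int\mu\,\partial_t u$ by parts in time against the identity $\partial_t(u\mu)=\mu\partial_t u+u\partial_t\mu$, and integrate $\int\mu\Delta_\GG u$ by parts in space to get $-\int\nabla_\GG\mu\cdot\nabla_\GG u$; (3) add the two resulting identities so that the $\int\nabla_\GG u\cdot\nabla_\GG\mu$ terms cancel, the $\int u\,\partial_t\mu$ and $\int\mu\,\partial_t u$ terms assemble into the boundary term $\int u\mu\,dx\big|_s^\tau$, and the remaining first-order term $\int\nabla_\GG u\cdot\gamma|\nabla_\GG u|^{\gamma-2}\nabla_\GG u\,\mu=\gamma\int|\nabla_\GG u|^\gamma\mu$ combines with the $-\int|\nabla_\GG u|^\gamma\mu$ from \eqref{HJweak1} to give $(\gamma-1)\int|\nabla_\GG u|^\gamma\mu$; (4) collect the terms to obtain exactly \eqref{eq:ABduality}.

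The main obstacle is the justification of integrability: one must check that every product appearing — $u\mu$, $\mu\partial_t u$, $u\partial_t\mu$, $\nabla_\GG u\cdot\nabla_\GG\mu$, $|\nabla_\GG u|^\gamma\mu$, $|\nabla_\GG u|^{\gamma-1}|\nabla_\GG u|\,|\mu|$ — is in $L^1((s,\tau)\times\R^d)$, and that the integrations by parts in space produce no boundary contribution at infinity. Here I would use that $u\in W^{2,\infty}_\GG$ uniformly on $[0,T]$, that $\nabla_\GG u\in L^p$ for all $p\geq\gamma$ by \Cref{integrability}$(i)$, that $\mu$ is bounded and $\mu(t,\cdot)\in L^1(\R^d)\cap L^2(\R^d)$ with $\nabla_\GG\mu\in L^2((s,\tau)\times\R^d)$ by \Cref{thm:ex_FP1} and \Cref{prop:ex_FP1}$(i)$, and that $\partial_t u,\partial_t\mu\in L^2$. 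The vanishing of boundary terms would be obtained by the standard cut-off argument: multiply by $\xi_R(x)$ as in the proof of \Cref{prop:ex_FP}, use $\|\Delta_\GG\xi_R\|_\infty=O(R^{-2})$ and $\|\nabla_\GG\xi_R\|_\infty=O(R^{-1})$ together with the global integrability just listed, and let $R\to\infty$ by dominated convergence. I would also handle the endpoints $s\to 0^+$ separately if needed, since \eqref{eq:FKequation} is only stated on the open interval, although here $s\in(0,\tau)$ is fixed so this is not an issue.
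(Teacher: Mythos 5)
Your proposal follows the same route as the paper: test the weak formulation \eqref{HJweak1} with $\mu$ and the (time-reversed) weak Fokker--Planck formulation \eqref{FPweak} with $u$, sum the two identities so the diffusion and drift terms combine into $(\gamma-1)\int|\nabla_{\GG}u|^{\gamma}\mu$ and the time derivatives assemble into $\partial_t[u\mu]$, yielding \eqref{eq:ABduality}. Your additional care about integrability of the products and the cut-off argument at infinity only fills in details the paper dismisses with ``from the regularity of $u$ and $\mu$'', so the argument is correct and essentially identical.
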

\proof


From the regularity of $u$ and $\mu$, by using $u$ as a test function in \eqref{eq:FKequation} and, respectively, $\mu$ as a test function in \eqref{HJweak1} and taking the sum of the two relations we obtain
\begin{multline*}
-\int_{s}^{\tau}\int_{\R^d} \partial_{t} [u(t, x) \mu(t, x)] dxdt  + \int_{s}^{\tau} \int_{\R^{d}} (\gamma |\nabla_{\GG} u(t, x)|^{\gamma} - |\nabla_{\GG} u(t, x)|^{\gamma})\mu(t, x)\ dxdt \\ = -\int_{s}^{\tau} \int_{\R^{d}} F(t, x)\mu(t, x)\ dxdt.
\end{multline*}
Hence we get \eqref{eq:ABduality}. \qed

In the following proposition we prove a key estimate using the duality argument of Lemma \ref{lem:dualityarg}.
\begin{proposition}\label{continuousbound}
Let $u \in \X(T_{0})$ be a solution to \eqref{HJJ} and let $\tau \in [0,T_{0}]$. Then, there exists $C$, depending on $T$, $\|u_{0}\|_{L^\infty(\R^{d})}$, $\|F \|_{L^\infty([0,T] \times \R^{d})}$ (and independent of $T_{0}$), such that 
\begin{equation}\label{eq:infinitybound}
\sup_{t \in [0, \tau]} \|u(t)\|_{L^\infty(\R^{d})} \leq C.
\end{equation}
\end{proposition}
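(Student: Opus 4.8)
The plan is to obtain the $L^\infty$ bound on $u$ by exploiting the duality relation \eqref{eq:ABduality} of \Cref{lem:dualityarg}, testing against a suitable concentrated solution of the adjoint Fokker--Planck equation. First I would fix $\tau\in(0,T_0]$ and a point $x_0\in\R^d$; the goal is to estimate $u(\tau,x_0)$ from above. To do this I would choose, for $\eta>0$, a nonnegative initial datum $\mu_\tau=\mu_\tau^\eta\in L^\infty(\R^d)\cap L^1(\R^d)$ with $\int_{\R^d}\mu_\tau\,dx=1$, concentrated near $x_0$ (e.g.\ a normalized bump supported in a small Carnot--Carath\'eodory ball $B(x_0,\eta)$), and let $\mu$ be the corresponding solution of \eqref{eq:FKequation} given by \Cref{thm:ex_FP1}. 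By property $(i)$ of that theorem, $\mu\geq0$ and $\int_{\R^d}\mu(t,x)\,dx=1$ for all $t\in[0,\tau]$; in particular $\{\mu(t,\cdot)\}$ is a curve of probability measures.

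Next I would insert this $\mu$ into the duality identity \eqref{eq:ABduality} with $s\downarrow0$ (using continuity of $u$ down to $t=0$ and of $\mu$, plus the $L^1$--$L^\infty$ pairing). Since $(\gamma-1)|\nabla_\GG u|^\gamma\mu\geq0$, we may drop that term and obtain
\begin{equation*}
\int_{\R^d}u(\tau,x)\mu_\tau(x)\,dx \leq \int_{\R^d}u_0(x)\mu(0,x)\,dx + \int_0^\tau\int_{\R^d}F(t,x)\mu(t,x)\,dtdx.
\end{equation*}
The right-hand side is controlled using $\mu\geq0$, the unit-mass property, and the uniform bounds $\|u_0\|_{L^\infty}$, $\|F\|_{L^\infty([0,T]\times\R^d)}$: it is at most $\|u_0\|_{L^\infty(\R^d)} + T\|F\|_{L^\infty([0,T]\times\R^d)}=:C$. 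On the left-hand side, using the Hölder continuity of $u(\tau,\cdot)$ (it lies in $W^{2,\infty}_\GG(\R^d)$, hence is Lipschitz in the sub-Riemannian sense on the relevant region) together with the fact that $\mu_\tau^\eta$ has unit mass and shrinking support around $x_0$, I would let $\eta\downarrow0$ to get $u(\tau,x_0)\leq C$. Since $x_0$ and $\tau$ are arbitrary, this yields $\sup_{t\in[0,\tau]}\sup_{x}u(t,x)\leq C$.

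For the lower bound one argues as in \Cref{integrability}$(i)$: since $u_0\geq0$ and the subelliptic heat flow preserves nonnegativity, the comparison principle applied to \eqref{HJJ}, written as $\partial_t u-\sigma\Delta_\GG u = F - |\nabla_\GG u|^\gamma$, gives $u\geq v$ where $\partial_t v-\sigma\Delta_\GG v = -\|F\|_{L^\infty}$, $v(0)=0$, so $v(t)=-t\|F\|_{L^\infty}\geq -T\|F\|_{L^\infty}$. Combining the two bounds gives $\|u(t)\|_{L^\infty(\R^d)}\leq \|u_0\|_{L^\infty(\R^d)}+T\|F\|_{L^\infty([0,T]\times\R^d)}$ for all $t\in[0,\tau]$, which is \eqref{eq:infinitybound} with a constant independent of $T_0$.

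The main obstacle I anticipate is the passage to the limit $\eta\downarrow0$ on the left-hand side: one must know that the family $\{\mu_\tau^\eta\}$ converges weakly-$*$ to $\delta_{x_0}$ \emph{and} that $u(\tau,\cdot)$ is continuous (better, uniformly continuous on compacts), so that $\int u(\tau,x)\mu_\tau^\eta(x)\,dx\to u(\tau,x_0)$; the $W^{2,\infty}_\GG$ regularity from \Cref{thm:existence} supplies exactly this, but one should be careful that the test functions $\mu_\tau^\eta$ genuinely satisfy the hypotheses needed to invoke \Cref{thm:ex_FP1} for the backward equation \eqref{eq:FKequation} (nonnegativity, $L^\infty\cap L^1$, unit mass) and that the drift $b=\gamma|\nabla_\GG u|^{\gamma-2}\nabla_\GG u$ has the required $C^{1,\delta}_\GG$ regularity — which follows from $u\in C^{1+\alpha/2,2+\alpha}_{\GG,\mathrm{loc}}$ and $\gamma\geq2$. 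A secondary technical point is justifying the integrations by parts in \eqref{eq:ABduality} and the limit $s\to0$ on noncompact $\R^d$; this is handled by the weak formulations $(ii)$ of \Cref{thm:ex_FP1} and \eqref{HJweak1} of \Cref{integrability}, together with the global integrability $\nabla_\GG u\in L^p$ from \Cref{integrability}$(i)$ and $\mu(t,\cdot)\in L^1\cap L^\infty$.
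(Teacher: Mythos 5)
Your upper-bound step contains a sign error that breaks the argument. In the duality identity \eqref{eq:ABduality} of \Cref{lem:dualityarg}, the term $\int_s^\tau\!\!\int_{\R^d}(\gamma-1)|\nabla_{\GG}u|^{\gamma}\mu\,dtdx$ appears on the \emph{right-hand side} with a plus sign, so dropping this nonnegative term gives
$\int_{\R^d}u(\tau,x)\mu_\tau(x)\,dx \;\geq\; \int_{\R^d}u_0(x)\mu(0,x)\,dx+\int_0^\tau\!\!\int_{\R^d}F\mu\,dtdx$,
i.e.\ a \emph{lower} bound on $\int u(\tau)\mu_\tau$, not the upper bound you claim. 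This is not a cosmetic issue: testing against the Fokker--Planck equation \eqref{eq:FKequation} driven by the ``optimal'' drift $\gamma|\nabla_{\GG}u|^{\gamma-2}\nabla_{\GG}u$ can only produce lower bounds on $u$, because the uncontrolled term $(\gamma-1)|\nabla_{\GG}u|^{\gamma}\mu\ge0$ pushes $\int u(\tau)\mu_\tau$ up. The paper obtains the upper bound differently: it tests against the solution of the backward \emph{heat} equation $-\partial_t\mu-\sigma\Delta_{\GG}\mu=0$ (zero drift), for which the duality identity \eqref{eq:duality1} carries the gradient term as $-\int_0^\tau\!\!\int_{\R^d}|\nabla_{\GG}u|^{\gamma}\mu$, i.e.\ with the favorable sign, so it can be discarded to get $u(\tau,x)\le\|u_0\|_{L^\infty}+T\|F\|_{L^\infty}$. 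Your scheme can be repaired either by switching the dual equation to the driftless heat flow for the upper bound (keeping your concentration argument, which is a legitimate variant of the paper's ``supremum over $\mu_\tau$''), or by a direct comparison with the supersolution $\|u_0\|_{L^\infty}+t\|F\|_{L^\infty}$ of \eqref{HJJ}; as written, however, the central estimate does not follow.

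A secondary point: your lower bound, as literally stated, misapplies linear comparison. You compare $u$, which solves $\partial_t u-\sigma\Delta_{\GG}u=F-|\nabla_{\GG}u|^{\gamma}$, with $v$ solving $\partial_t v-\sigma\Delta_{\GG}v=-\|F\|_{L^\infty}$; but $F-|\nabla_{\GG}u|^{\gamma}$ need not dominate $-\|F\|_{L^\infty}$, since $|\nabla_{\GG}u|^{\gamma}$ is only bounded by a constant depending on $T_0$. The step is salvageable if you instead view $v(t,x)=-t\|F\|_{L^\infty}$ as a subsolution of the full Hamilton--Jacobi equation (its horizontal gradient vanishes, so $\partial_t v-\sigma\Delta_{\GG}v+|\nabla_{\GG}v|^{\gamma}=-\|F\|_{L^\infty}\le F$) and invoke a comparison principle for \eqref{HJJ} in the class of bounded classical solutions --- or, more in the spirit of the paper, note that your (sign-corrected) duality inequality with the drifted Fokker--Planck dual already yields $u(\tau,x)\ge-\|u_0\|_{L^\infty}-T\|F\|_{L^\infty}$; the paper itself derives the lower bound from the two duality identities, at the price of the larger constant $\tfrac{\gamma+1}{\gamma-1}\bigl(\|u_0\|_{L^\infty}+T\|F\|_{L^\infty}\bigr)$.
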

\proof
First, we prove a bound from above for $u$. To do so, fix $\tau \in [0,T_{0}]$ and consider the solution $\mu: [0, \tau] \times \R^{d} \to \R$ to the following problem
\begin{equation*}
\begin{cases}
-\partial_{t} \mu(t, x) - \sigma\Delta_{\GG} \mu(t, x)= 0, & (t, x) \in [0, \tau] \times \R^{d}
\\
\mu(\tau,x)=\mu_{\tau}(x), & x \in \R^{d}
\end{cases}
\end{equation*}
with $\mu_{\tau} \in \C^{\infty}_{c}(\R^{d})\cap L^1(\R^d)$ with $\mu_\tau\geq 0$ and $\| \mu_\tau\|_{1, \R^{d}} = 1$.
By duality arguments, i.e., proceeding as in \Cref{lem:dualityarg}, we obtain 
\begin{multline}\label{eq:duality1}
\int_{\R^{d}} u(\tau, x) \mu_{\tau}(x)\ dx = \int_{\R^{d}} u_{0}(x) \mu(0, x)\ dx \\ + \int_{0}^{\tau}\int_{\R^{d}} F(s, x)\mu(s, x)\ dxds - \int_{0}^{\tau} \int_{\R^{d}} |\nabla_{\GG} u(s, x)|^{\gamma}\mu(s, x)\ dxds.
\end{multline}
Since, from \Cref{thm:ex_FP1}, $\| \mu(t, \cdot)\|_{1, \R^{d}} = 1$ for any $t \in [0, \tau]$  and $\mu\geq 0$, we get
\begin{multline*}
\int_{\R^{d}} u_{0}(x) \mu(0, x)\ dx + \int_{0}^{\tau}\int_{\R^{d}} F(s, x)\mu(s, x)\ dxds \\ - \int_{0}^{\tau} \int_{\R^{d}} |\nabla_{\GG} u(s, x)|^{\gamma}\mu(s, x)\ dxds \leq \|u_{0}\|_{L^\infty(\R^{d})} + T\|F\|_{L^\infty([0, \tau] \times \R^{d})}.
\end{multline*}
Hence, 
\begin{equation*}
\int_{\R^d} u(\tau,x) \mu_{\tau}(x)\ dx \leq \|u_{0}\|_{L^\infty(\R^{d})} + T\|F\|_{L^\infty([0, \tau] \times \R^{d})}
\end{equation*}
and, thus, by passing to the supremum, over $\mu_{\tau} \geq 0$ with $\| \mu(t, \cdot)\|_{1} = 1$ one deduces 
\begin{equation}\label{eq:upperbound}
u(\tau, x) \leq \|u_{0}\|_{L^\infty(\R^{d})} + T \|F\|_{L^\infty([0, \tau] \times \R^{d})}.
\end{equation}
To prove the lower bound for $u$, we first observe that combining \eqref{eq:ABduality} and \eqref{eq:upperbound} we get
\begin{equation}\label{comb}
\int_{0}^{\tau}\int_{\R^{d}} (\gamma - 1 ) |\nabla_{\GG} u(s, x)|^{\gamma}\mu(s, x)\ dsdx \leq 2 (\|u_{0}\|_{L^\infty(\R^{d})}  + T\|F\|_{L^\infty([0,\tau] \times \R^{d})}). 
\end{equation}
So, again by \eqref{eq:duality1} we get
\begin{multline*}
\int_{\R^{d}} u(\tau, x)\mu_{\tau}(x)\ dx \geq \int_{\R^{d}} u_{0}(x)\mu(0,x)\ dx \\ - \frac{2}{\gamma-1} (\|u_{0}\|_{L^\infty(\R^{d})}  + T\|F\|_{L^\infty([0, \tau] \times \R^{d})}) + \int_{0}^{\tau}\int_{\R^{d}}{F(s, x)\mu(s,x)\ dsdx} 
\end{multline*}
which, by the arbitrariness of $\mu_{\tau}$, yields to 
\begin{equation}\label{eq:lowerbound}
u(\tau, x) \geq -\frac{\gamma+1}{\gamma-1}\left( \|u_{0}\|_{L^\infty(\R^{d})} + T\|F\|_{L^\infty([0, \tau] \times \R^{d})}\right).
\end{equation}
Therefore, \eqref{eq:upperbound} and \eqref{eq:lowerbound} imply \eqref{eq:infinitybound}. \qed

Now we want to prove a local bound on $\nabla_{\GG} u$. To do this we need the following remark.
\begin{remarks}\label{rem:Normequivalence}\em
Following \cite{Martino_2012}, for any two Riemannian metrics $g$ and $\widetilde{g}$, we have that, by the very definition of the gradient, for every smooth function $u$ it holds that $g(D_{g} u, \cdot) = \widetilde{g}(D_{\widetilde{g}} u, \cdot)$. So, since any two Riemannian metrics are equivalent, if we denote by $|D_{g} u|_{g}^{2} = g(D_{g} u, D_{g} u)$, and the same for $D_{\widetilde{g}} u$, we have that for any compact set $\Omega \subset \R^d$ there exist constants $K_{1}(\Omega)$, $K_{2}(\Omega)$ such that 
\begin{equation}\label{eq:equivalence}
K_{1}(\Omega) |D_{g} u|_{g}^{2} \leq |D_{\widetilde{g}} u|_{\widetilde{g}}^{2} \leq K_{2}(\Omega)|D_{g} u|_{g}^{2}. 
\end{equation}
In particular, for any two sequences of metrics $\{g_{n}\}_{n \in \N}$, $\{ \widetilde{g}_{n}\}_{n \in \N}$ such that $g_{n} \to g$ and $\widetilde{g}_{n} \to \widetilde{g}$ it can be proved that $K_{1}(\Omega)$ and $K_{2}(\Omega)$ in \eqref{eq:equivalence} can be chosen independently of $n$, i.e., there exist $K_{1}^{\prime}(\Omega)$, $K_{2}^{\prime}(\Omega)$ such that 
\begin{equation}\label{eq:equivalence1}
K_{1}^{\prime}(\Omega) |D_{g_{n}} u|_{g_{n}}^{2} \leq |D_{\widetilde{g}_{n}} u|_{\widetilde{g}_{n}}^{2} \leq K_{2}^{\prime}(\Omega) |D_{g_{n}} u|_{g_{n}}^{2}. 
\end{equation}
\end{remarks}

We now recall that, to obtain a bound on $\nabla_{\GG} u$,
we cannot directly apply the classical Bernstein method to $X_i u$, where $u$ solves the equation ~\eqref{HJJ}, but we have to adapt it because some extra terms, involving commutators, would appear. In order to overcome this issue, we consider the family of right-invariant vector fields $\{Y_1, \dots, Y_m\}$ introduced in \eqref{hyp:reg_Y}. 


\begin{proposition}\label{horizontalbound}
Under the assumptions of \Cref{thm:existence1}, let $u \in \X(T_{0})$ be a solution to \eqref{HJJ} and let $\tau \in [0,T_{0}]$. Then, for any compact subset $\Omega$ of $\R^d$ there exists $C(\Omega)$, depending on $\|\nabla_{\GG}u_{0}\|_{L^\infty(\R^{d})}$, $\|\nabla_{\GG}F \|_{L^\infty([0,T_{0}] \times \R^{d})}$, such that 
\begin{equation}\label{eq:infinitybound2}
\sup_{t \in [0, \tau)} \| \nabla_{\GG} u(t)\|_{L^\infty(\Omega)} \leq C(\Omega). 
\end{equation}
\end{proposition}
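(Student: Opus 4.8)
The plan is to establish the local gradient bound via a Bernstein-type argument, but carried out on the right-invariant vector fields $\{Y_1,\dots,Y_m\}$ rather than on the left-invariant ones, and then transfer the conclusion back to the horizontal gradient $\nabla_\GG u$ using the local equivalence of the two distributions. The reason for passing to right-invariant fields is the key structural fact that right-invariant vector fields commute with left-invariant ones: $[Y_j, X_i] = 0$ for all $i,j$. Consequently, if $u$ solves \eqref{HJJ}, then $Y_j u$ solves a linearized equation in which no commutator terms appear when we differentiate $\Delta_\GG u = \sum_i X_i^2 u$ — namely $\partial_t(Y_j u) - \sigma\Delta_\GG(Y_j u) + \gamma|\nabla_\GG u|^{\gamma-2}\nabla_\GG u\cdot\nabla_\GG(Y_j u) = Y_j F$. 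This is precisely the step that fails for $X_i u$, and the whole point of the extra hypothesis \eqref{hyp:reg_Y} is that the data $Y_j F$ and $Y_j u_0$ are bounded so this linearized problem has bounded right-hand side and initial datum.

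**First I would** set $w = \sum_{j=1}^m (Y_j u)^2 = |\nabla_Y u|^2$ (up to the metric identification discussed in \Cref{rem:Normequivalence}) and compute the parabolic equation it satisfies. Differentiating and summing, the good second-order term $-\sigma\sum_{i}\sum_j (X_i Y_j u)^2$ appears with the right sign, and the transport term $\gamma|\nabla_\GG u|^{\gamma-2}\nabla_\GG u\cdot\nabla_\GG w$ is harmless for a maximum-principle argument; the forcing contributes a term like $\sum_j Y_j u\, Y_j F$, which by Cauchy--Schwarz and \eqref{hyp:reg_Y} is bounded by $C(1 + w)$. To localize, I would multiply by a cutoff $\zeta \in C_c^\infty(\R^d)$ (or work with $\zeta w$), producing extra terms $-\sigma w\Delta_\GG\zeta$, $-2\sigma\nabla_\GG\zeta\cdot\nabla_\GG w$ and a term from the transport operator hitting $\zeta$; using the already-established global $L^\infty$ bound on $u$ (\Cref{continuousbound}) to control $|\nabla_\GG u|^{\gamma-1}$-type factors is not available directly, so here the first-order term $\nabla_\GG w$ must be absorbed into $\sigma\sum(X_iY_j u)^2$ via Young's inequality — this is the standard Bernstein bookkeeping. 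The outcome is a differential inequality $\partial_t(\zeta w) - \sigma\Delta_\GG(\zeta w) + (\text{transport}) \le C_1 + C_2(\zeta w)$ on a slightly larger compact set, valid on $[0,\tau)$, from which Gronwall (or a direct maximum-principle comparison with the ODE solution) yields $\sup_{[0,\tau)}\|\zeta w\|_{L^\infty} \le C(\Omega)$ depending on $\|\nabla_\GG u_0\|_\infty$, $\|Y_j u_0\|_\infty$, $\|Y_j F\|_\infty$, $T_0$.

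**Then I would** convert this into the claimed bound on $\nabla_\GG u$. By \Cref{rem:Normequivalence}, applied with $g$ the left-invariant sub-Riemannian metric (completed to a Riemannian one) and $\widetilde g$ the right-invariant one, on the compact set $\Omega$ we have $|\nabla_\GG u|^2 \le K_2(\Omega)|\nabla_Y u|^2 = K_2(\Omega)\, w$ pointwise, since the two distributions and metrics are locally Lipschitz equivalent (both being everywhere-spanning smooth distributions on $\Omega$, as the right-invariant fields also satisfy Hörmander's condition). Choosing $\zeta \equiv 1$ on $\Omega$ in the previous step gives $\sup_{t\in[0,\tau)}\|\nabla_\GG u(t)\|_{L^\infty(\Omega)}^2 \le K_2(\Omega)\sup_{t\in[0,\tau)}\|w(t)\|_{L^\infty(\Omega')} \le C(\Omega)$, which is \eqref{eq:infinitybound2}.

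**The hard part will be** the localized Bernstein estimate itself — specifically, making sure every term produced by the cutoff and by the nonlinear transport operator $\gamma|\nabla_\GG u|^{\gamma-2}\nabla_\GG u\cdot\nabla_\GG(\cdot)$ can be absorbed, given that a priori we only control $\nabla_Y u$ locally and $\nabla_\GG u$ is not yet known to be locally bounded (that is what we are proving). One must be careful that the coefficient $\gamma|\nabla_\GG u|^{\gamma-2}\nabla_\GG u$ in the transport term is, on $\Omega'$, itself controlled by a power of $w$, so the differential inequality is genuinely of the form $\partial_t(\zeta w) \le \sigma\Delta_\GG(\zeta w) - (\text{transport}) + C(1+\zeta w)$ only after absorbing $|\nabla_\GG u|\,|\nabla_\GG w|$ into the dissipation; since $|\nabla_\GG w| \lesssim \sqrt w\,(\sum(X_iY_ju)^2)^{1/2}$, Young's inequality closes this provided $\sigma>0$, with the resulting constant depending on $\sigma$ but uniformly for $\sigma$ in compact subsets of $(0,\infty)$ — consistent with the uniform-in-$\sigma$ claim in the surrounding discussion. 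A secondary technical point is justifying that $w$ (or $\zeta w$) is regular enough to run the maximum principle: this follows from the interior $C^{1+\alpha/2,2+\alpha}_{\GG,\mathrm{loc}}$ regularity of $u$ from \Cref{thm:existence}, possibly after an approximation/mollification at time $0$.
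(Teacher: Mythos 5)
Your first and last steps coincide with the paper's: differentiate the equation along the right-invariant fields using $[Y_j,X_i]=0$ (the paper does this for the uniformly elliptic $\eps$-completion $\{X_i^\eps\}$, $\{Y_j^\eps\}$, where the commutation still holds), and transfer back to $\nabla_{\GG}u$ on compact sets via \Cref{rem:Normequivalence}. The middle step, however, is where the paper and you diverge, and as written your localized Bernstein estimate has a genuine gap: it does not close for $\gamma\ge 2$. When you localize $w=\sum_j(Y_ju)^2$ with a cutoff $\zeta$, the transport term $\gamma|\nabla_{\GG}u|^{\gamma-2}\nabla_{\GG}u\cdot\nabla_{\GG}w$ produces, on the support of $\nabla_{\GG}\zeta$ (equivalently, at an interior maximum of $\zeta w$ after substituting $\zeta\nabla_{\GG}w=-w\nabla_{\GG}\zeta$), a term of size $\gamma|\nabla_{\GG}u|^{\gamma-1}\,w\,|\nabla_{\GG}\zeta|\lesssim w^{(\gamma+1)/2}$, where I used precisely the local equivalence $|\nabla_{\GG}u|\lesssim\sqrt{w}$ that you invoke; and your alternative of absorbing $\zeta|\nabla_{\GG}u|^{\gamma-1}|\nabla_{\GG}w|\lesssim \zeta|\nabla_{\GG}u|^{\gamma-1}\sqrt{w}\,\bigl(\sum_{i,j}(X_iY_ju)^2\bigr)^{1/2}$ into the dissipation by Young leaves a remainder $\sigma^{-1}\zeta|\nabla_{\GG}u|^{2(\gamma-1)}w\lesssim w^{\gamma}$. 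Either way the differential inequality for $\sup(\zeta w)$ is superlinear, so Gronwall or ODE comparison gives only a short-time, data-dependent bound and not \eqref{eq:infinitybound2} on all of $[0,\tau)$. Closing a superquadratic Bernstein argument would require extracting a good term of order $w^{\gamma}$ from the dissipation through the equation itself (bringing in $\partial_t u$ and the mismatch between $\sum_{i,j}(X_iY_ju)^2$ and $(\Delta_{\GG}u)^2$ in the two frames), none of which appears in your sketch; and you cannot repair it by using the bound $\|\nabla_{\GG}u\|_{L^\infty}\le\kappa(T_0)$ from \Cref{thm:existence} to treat the drift as bounded, since then $C(\Omega)$ would depend on $\kappa(T_0)$ and the estimate would be useless for the continuation argument in the proof of \Cref{thm:existence1}.

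The paper avoids Bernstein altogether at this point: each $v_j^\eps=Y_ju^\eps$ solves the \emph{linear} transport-diffusion equation \eqref{YY} with source $Y_jF$ and datum $Y_ju_0$, and the $L^\infty$ bound is obtained by duality, testing against the solution of the adjoint Fokker--Planck problem \eqref{testY}, whose solution is a probability density by \Cref{thm:ex_FP1}; this yields $\sup_{t\in[0,\tau)}\|Y_ju^\eps\|_{L^\infty(\R^d)}\le \|Y_jF\|_{L^\infty}+\|Y_ju_0\|_{L^\infty}$ with no cutoff, no quadratic quantity, and constants independent of $\sigma$ and of $\kappa(T_0)$ (the drift, whatever its size, is harmless for this maximum-principle-type bound). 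Only afterwards does one pass to $\nabla_{\GG}u$ via \Cref{rem:Normequivalence} and let $\eps\downarrow 0$. Note also that your constants degenerate as $\sigma\downarrow 0$ through the Young absorption, whereas the duality route is uniform in $\sigma$, consistent with the uniformity claimed in the introduction. You should replace the localized Bernstein step by this duality (or an equivalent comparison-principle) bound on each $Y_ju$.
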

\proof
 Let $\{X_{1}^{\eps}, \dots, X_{m}^{\eps}, X_{m+1}^{\eps}, \dots, X_{d}^{\eps}\}$ be a completion of $\{X_{1}, \dots, X_{m}\}$ to a Riemannian basis on $\R^{d}$ with $X_{j}^{\eps}=X_{j}$ for $j=1,\dots, m$ and, for some fields $\bar X_{j}$, $X_{j}^{\eps}=\eps \bar X_{j}$ for $j=m+1,\dots,d$. Similarly, we denote by  $\{Y_{1}^{\eps}, \dots, Y_{m}^{\eps}, Y_{m+1}^{\eps}, \dots, Y_{d}^{\eps}\}$ a completion of $\{Y_{1}, \dots, Y_{m}\}$. Observe that, for any $i$, $j = 1, \dots, d$ we have that $[X_{i}^{\eps}, Y_{j}^{\eps}] = 0$ (see~\cite[Lemma 2.1]{Martino_2012}).

 We consider the complete Hamilton-Jacobi equation 
\begin{equation}\label{eq:completesystem}
\begin{cases}
\partial_{t} u^{\eps}(t, x) - \sigma \Delta_{\eps} u^{\eps}(t, x) + |\nabla_{\eps} u^{\eps} (t, x)|^{\gamma} = F(t, x), & (t, x) \in (0,T) \times \R^{d} 
\\
u^{\eps}(0, x)=u_{0}(x), & x \in \R^{d}
\end{cases}
\end{equation}
where $\Delta_{\eps}$ and $\nabla_{\eps}$ denotes, respectively, the Laplacian and the gradient w.r.t. the vector fields $\{X_{1}^{\eps}, \dots, X_{d}^{\eps}\}$. Fix $j \in \{1, \dots, d\}$ and set $v^{\eps}_{j}=Y_{j}u^{\eps}$.   From \eqref{eq:completesystem} we have that $v^{\eps}_{j}$ is a classical solution to the equation 
\begin{equation}\label{YY}
\partial_{t} v^{\eps}_{j}(t, x) - \sigma \Delta_{\eps} v^{\eps}_{j}(t, x) + \gamma|\nabla_{\eps} u^{\eps}(t, x)|^{\gamma - 2}\nabla_{\eps} u^{\eps}(t, x) \nabla_{\eps} v^{\eps}_{j} = Y_{j}F(t, x), \quad (t, x) \in (0,T) \times \R^{d}. 
\end{equation}
We proceed with a duality argument as before. To do so, for $\tau > 0$, let $\mu$ solve
\begin{equation}\label{testY}
\begin{cases}
-\partial_{t}\mu(t, x) - \sigma \Delta_{\eps} \mu(t, x) - \text{div}_{\eps}(\gamma|\nabla_{\eps} u^{\eps}(t, x)|^{\gamma - 2}\nabla_{\eps}u^{\eps}(t, x)\mu(t, x))=0, \quad (t, x) \in (0, \tau)
\\
\mu(\tau, x)=\mu_{\tau}(x)
\end{cases}
\end{equation}
where $\text{div}_{\eps}$ denotes the divergence operator w.r.t. $\{X_{1}^{\eps}, \dots, X_{m}^{\eps}, X_{m+1}^{\eps}, \dots, X_{d}^{\eps}\}$. Thus, using $\mu$ as a test function in the weak formulation of \eqref{YY} and $v$ as a test function in the weak formulation of \eqref{testY} we obtain 
\begin{equation*}
\sup_{t \in [0, \tau)} \| v^{\eps}_{j}\|_{L^\infty(\R^{d})} \leq \| Y_{j} F\|_{L^\infty([0, \tau] \times \R^{d})} +\| Y_{j} u_0\|_{L^\infty(\R^{d})},
\end{equation*}
which is bounded by \eqref{hyp:reg_Y}.\\
Applying \Cref{rem:Normequivalence} to $\{Y_{1}^{\eps}, \dots, Y_{m}^{\eps}, Y_{m+1}^{\eps}, \dots, Y_{d}^{\eps}\}$ and $\{X_{1}^{\eps}, \dots, X_{m}^{\eps}, X_{m+1}^{\eps}, \dots, X_{d}^{\eps}\}$ we get
\begin{equation*}
\sup_{t \in [0, \tau)} \| X^{\eps}_{j} u^{\eps} \|_{L^\infty(\Omega)} \leq C(\Omega)\left(\| X_j F\|_{L^\infty([0, \tau] \times \R^{d})}  +\| X_{j} u_0\|_{L^\infty(\R^{d})}\right)
\end{equation*}
for any compact subset $\Omega$ of $\R^d$ and some constant $C(\Omega) \geq 0$. So, as $\eps \downarrow 0$ we have that $X^{\eps}_{j} u^{\eps} \to X_{j} u$ if $j \in \{1, \dots, m\}$ and $X^{\eps}_{j} u^{\eps} \to 0$ if $j \in \{m+1, \dots, d\}$. Hence, this yields to 
\begin{equation*}
\sup_{t \in [0, \tau)} \| \nabla_{\GG} u \|_{L^\infty(\Omega)} \leq C(\Omega)\left(\| \nabla_{\GG} F\|_{L^\infty([0, \tau] \times \R^{d})} +\| \nabla_{\GG} u_0\|_{L^\infty(\R^{d})}\right). \eqno\square
\end{equation*}




\vspace{0.5cm}
\noindent{\it Proof of \Cref{thm:existence1}.} Reasoning as in \cite[sect.2.1]{Davini_2019}, from \Cref{continuousbound} and \Cref{horizontalbound} we have that there exists a solution on $[0,T_0 +\eps]$ and thus, a solution $u \in C([0,T]; W^{1, \infty}_{\GG, \text{loc}}(\R^d))$ for $T$ finite but arbitrary large.

Next, in order to complete the proof of gain regularity of the solution we proceed with a bootstrap argument. For any $R \geq 0$ let $u_R$ be a solution to 
\begin{equation*}
\begin{cases}
\partial_{t} u_R(t, x) - \sigma \Delta_{\GG} u_R(t, x) + |\nabla_{\GG} u_R (t, x)|^{\gamma} = F(t, x), & (t, x) \in (0,T] \times B_R 
\\
u_R(t, x) = u_0(x), & x \in B_R 
\\
u_R(t, x)=0, & x \in \partial B_R.
\end{cases}
\end{equation*}
Then, as proved so far we have that $u_R \in C([0,T]; W^{1, \infty}_{\GG}(\R^d))$. Then, the same equation can be seen as a subelliptic heat equation with bounded right hand-side. Hence, the right hand-side belongs to $L^p(B_R)$ for any $p \geq 1$ and using \Cref{lamberton}, we obtain $u_R \in W^{1,p}([0,T]; W^{2,p}_{\GG}(B_R))$. Taking $p \geq Q+2$ applying \Cref{immersion} we gain regularity on the solution, that is, $u_R \in C^{1,\alpha}_{\GG}([0,T] \times B_R)$ with $\alpha = \frac{1}{p}(p - (Q+2))$. So, following again the same reasoning we deduce that for any $R \geq 0$ the solution $u_R$ belongs to $C^{2+\alpha, 1+\frac{\alpha}{2}}_{\GG}([0,T] \times B_R)$. Finally, as $R \uparrow \infty$ by a diagonalization argument the proof is complete. \qed


\section{Application to MFG}

The goal of this section is the application of the above results to get the existence for small times of solutions to the following MFG system
\begin{equation}\label{SR-MFG}
\begin{cases}
-\partial_{t} u(t, x) - \sigma \Delta_{\GG} u(t, x) + |\nabla_{\GG} u (t, x)|^{\gamma} = F_{\textrm{\tiny {MFG}}}[\rho](x), & (t, x) \in (0,T) \times \R^{d} 
\\
\partial_{t} \rho_{t} - \sigma \Delta_{\GG} \rho - \text{div}_{\GG}(\gamma |\nabla_{\GG} u(t, x)|^{\gamma-2} \nabla_{\GG}u(t, x)\rho)=0, & (t, x) \in (0,T) \times \R^{d}
\\
u(T, x) = u_{T}(x), \quad \rho(0, x)=\rho_0(x), & x \in \R^d. 
\end{cases}
\end{equation}
Note that we cannot prove the existence for any time $T$ because of the lack of compactness in the results of \Cref{thm:existence1}.\\
As it is customary in MFG, the existence result is a consequence of the Schauder's fixed point theorem. However, due to the lack of control of the moments of the measure $\rho_t$ associated to the solution of the Fokker-Plank equation, the strategy is quite different w.r.t. the classical literature.


\medskip
\noindent{\it Proof of \Cref{MFGmain}.} We obtain the existence of a classical solution using the Schauder fixed-point theorem. To do so, we endow the space $C([0,T]; W^{2,\infty}_{\GG}(\R^{d}))$  with the topology induced by the uniform convergence and we construct a map 
\begin{equation*}
\TT: C([0,T]; W^{2,\infty}_{\GG}(\R^{d}))\to C([0,T]; W^{2,\infty}_{\GG}(\R^{d}))
\end{equation*}
in the following way: given $u \in \mathcal{C}$, with
\begin{equation*}
\mathcal{C}= \left\{ u \in C([0,T]; W^{2,\infty}_{\GG}(\R^{d})) : \sup_{t \in [0,T]} \|u(t)\|_{W^{2,\infty}_{\GG}(\R^d)} \leq \kappa(T_0)\right\}
\end{equation*} 
where $T_0$ and $\kappa(T_0)$ are the constants constructed in \Cref{thm:existence} and let $\mu$ be the unique solution constructed in \Cref{thm:ex_FP1} to 
\begin{equation*}
\begin{cases}
\partial_{t} \mu - \sigma \Delta_{\GG} \mu - \text{div}_{\GG}(\gamma |\nabla_{\GG} u(t, x)|^{\gamma-2} \nabla_{\GG}u(t, x)\mu)=0, & (t, x) \in (0,T) \times \R^{d}
\\
\mu(0,x)=\rho_0(x), & x \in \R^d.
\end{cases}
\end{equation*}
Then, we set $\psi = \TT(u)$ as the unique solution constructed in \Cref{thm:existence} to the Hamilton-Jacobi equation
\begin{equation*}
\begin{cases}
-\partial_{t} \psi(t, x) - \sigma \Delta_{\GG} \psi(t, x) + |\nabla_{\GG} \psi (t, x)|^{\gamma} = F_{\textrm{\tiny {MFG}}}[\mu](x), &  (t, x) \in (0,T) \times \R^{d}
\\
\psi(T, x) = u_T(x), & x \in \R^d. 
\end{cases}
\end{equation*}

First, we claim that the map $\TT$ is well-defined and continuous for any time horizon $T \leq T_0$. Indeed, the vector field $|\nabla_{\GG} u|^{\gamma-2} \nabla_{\GG} u$ satisfies the assumptions of \Cref{thm:ex_FP1} and the solution to the Fokker-Plank equation is H\"older continuous by  \Cref{holder}. Moreover, from \Cref{thm:existence} we have that $\TT(\mathcal{C})$ is a compact subset of $\mathcal{C}$ since the solution to the Hamilton-Jacobi equation is locally bounded in $C^{2+\alpha, 1+\frac{\alpha}{2}}_{\GG}([0,T] \times \R^{d}))$ by \cite[Theorem 1.1]{BramantiBrandolini_07}. 

Hence, since all the assumptions of the Schauder fixed point theorem are satisfied, the proof is complete.  
\qed


%
%
%
%


\end{document}